\newtheorem {Theorem}   {Theorem}[section]
\newtheorem {thm}    [Theorem]{Theorem}
\newtheorem {lem}      [Theorem]    {Lemma}
\newtheorem {cor}  [Theorem] {Corollary}
\newtheorem {prp}[Theorem]  {Proposition}
\newcounter{AbcT}
\newtheorem {AbcTheorem} [AbcT]{Theorem}
\renewcommand{\a}{\alpha}
\renewcommand{\d}{\delta}
\newcommand{\e}{\varepsilon}
\newcommand{\f}{\varphi}
\newcommand{\g}{\gamma}
\renewcommand{\k}{\kappa}
\renewcommand{\l}{\lambda}
\renewcommand{\o}{\omega}
\newcommand{\s}{\sigma}
\renewcommand{\t}{\theta}
\newcommand{\R}{{\mathbb R}}
\newcommand{\N}{{\mathbb N}}
\newcommand{\Z}{{\mathbb Z}}
\newcommand{\C}{{\mathbb C}}
\newcommand{\E}{{\mathbb E}}
\newcommand{\F}{{\mathbb F}}
\renewcommand{\P}{{\mathbb P}}
\newcommand {\cH} {{\mathcal H}}
\newcommand {\cL} {{\mathcal L}}
\newcommand {\cR} {{\mathcal R}}
\newcommand {\cM} {{\mathcal M}}
\newcommand{\ft}{{\mathfrak t}}
\newcommand {\ua} {{\underline a}}
\newcommand {\ub} {{\underline b}}
\newcommand {\ula} {{\underline \l}}
\newcommand {\up} {{\underline p}}
\newcommand {\ueta} {{\underline \eta}}
\newcommand{\wh}{\widehat}
\newcommand{\be}{\begin{equation}}
\newcommand{\ee}{\end{equation}}
\newcommand{\bea}{\begin{align}}
\newcommand{\eea}{\end{align}}
\newcommand{\bean}{\begin{align*}}
\newcommand{\eean}{\end{align*}}
\newcommand {\IGNORE}[1]  {}
\newcommand {\absolute}[1] {\left| {#1} \right|}
\newcommand {\norm}[1] {\left\| {#1} \right\|}
\newcommand{\fourIdx}[5]{%
\setbox1=\hbox{\ensuremath{^{#1}}}%
\setbox2=\hbox{\ensuremath{_{#2}}}%
\setbox5=\hbox{\ensuremath{#5}}%
\hspace{\ifnum\wd1>\wd2\wd1\else\wd2\fi}%
\ensuremath{\copy5^{\hspace{-\wd1}\hspace{-\wd5}#1\hspace{\wd5}#3}%
_{\hspace{-\wd2}\hspace{-\wd5}#2\hspace{\wd5}#4}%
}}
\newcommand{\Fou}[2]{#1^{\wedge}(#2)}
\newcommand{\HS}{{\rm HS}}
\newcommand{\Id}{{\rm Id}}
\newcommand{\Lip}{{\rm Lip}}
\newcommand{\igap}{\:}
\newcommand{\restr}[1]{|_{#1}}
\DeclareMathOperator{\supp}{supp}
\DeclareMathOperator{\SL}{SL}
\DeclareMathOperator{\SO}{SO}
\DeclareMathOperator{\SU}{SU}
\DeclareMathOperator{\Res}{Res}
\DeclareMathOperator{\Isom}{Isom}
\DeclareMathOperator{\dist}{dist}
\renewcommand{\Re}{\operatorname{Re}}
\DeclareMathOperator{\Tr}{Tr}
\title[Random walks and self-similar measures]%
{Random walks in the group of Euclidean isometries and self-similar measures}
\author{Elon Lindenstrauss}
\address{EL, Einstein Institute of Mathematics,
The Hebrew University of Jerusalem,
Jerusalem, 91904, Israel}
\email{elon@math.huji.ac.il}
\author{P\'eter P. Varj\'u}
\address{PPV, Centre for Mathematical Sciences,
Wilberforce Road, Cambridge CB3 0WA,
England}
\email{pv270@dpmms.cam.ac.uk}
\thanks{EL was supported in part by the ERC (AdG Grant 267259), the ISF (983/09) and the NSF (DMS-0800345). PV was supported in part by the Simons Foundation and the ERC (AdG Grant 267259).}
\subjclass[2010]{Primary 60B15; Secondary 28A80, 60G30, 05E15, 37A30}
\date{\SVNDate}
\begin{document}

\begin{abstract}
We study products of random isometries acting on Euclidean space. Building on previous work of the second author,
we prove a local limit theorem for balls of shrinking radius with exponential speed
under the assumption that a Markov operator associated to the rotation component
of the isometries has spectral gap.
We also prove that certain self-similar measures are absolutely continuous with smooth densities.
These families of self-similar measures give higher dimensional analogues of 
Bernoulli convolutions on which absolute continuity can be established for contraction ratios in an open set.
\end{abstract}

\maketitle
\section{Introduction}
\label{section:introduction}

\subsection{Random walks in Euclidean space}
\label{section:RW}

We consider two problems in this paper.
The first one concerns random walks in Euclidean space, where the steps are isometries.
Let $X_1,X_2,\ldots\in\Isom(\R^d)$ be a sequence of i.i.d. random orientation preserving
isometries with an arbitrary probability law.
Fix a point $x_0\in \R^d$ and consider the sequence of random points $Y_l=X_l\cdots X_1(x_0)$.
This is called the random walk.
Our aim is to understand the distribution of the point $Y_l$.

The hypothesis in our result will be formulated in terms of spectral properties of an
operator associated to the law of $X_i$.
We denote the canonical projection from the group of isometries to the group of rotations
by $\t:\Isom(\R^d)\to\SO(d)$.
For a function $\f\in L^2(\SO(d))$, we write
\be\label{equation:defT}
T\f(\s)=\E[\f(\t(X_1)^{-1}\s)].
\ee
This defines an operator on the space $ L^2(\SO(d))$.
Denote by  $L^2_0(\SO(d))$ the 1 codimensional subspace of functions orthogonal to the constants.
We say that $T$ has \emph{spectral gap} if there is an integer $l>0$ such that $\|T^l\|_{L^2_0(\SO(d))}<1$.

Our main result on random walks stated below will show that $Y_l$ can be approximated
by Gaussian random variables on scales between $e^{-cl}$ and $\sqrt{l}$.

\begin{thm}\label{theorem:RW}
With notation as above, suppose that $T$ has spectral gap, $d\ge 3$,
$\E[|Y_1|^\a]<\infty$ for some $2<\a\le 4$ and there is no point $x\in\R^d$ such that $X_1(x)=x$
almost surely.
Then there is a point $y_0\in\R^d$, a centrally symmetric Gaussian random variable $Z$
and a number $c>0$ all depending only on the law of $X_1$ such that the following holds.
Let $f$ be a compactly supported smooth function on $\R^d$.
Then
\begin{align*}
\E[f(Y_l)]&=\E[f(\sqrt l Z +y_0)]+O(l^{-\frac{d+\a-2}{2}}+|x_0|^2l^{-\frac{d+2}{2}})\|f\|_1\\
&\qquad{}+O(e^{-cl})\|f\|_{W^{2,(d+1)/2}}.
\end{align*}
The implied constants depend only on the law of $X_1$.
\end{thm}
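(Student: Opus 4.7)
The plan is to analyze $Y_l$ through its characteristic function $\phi_l(\xi) := \E[e^{i\langle\xi,Y_l\rangle}]$ and then recover $\E[f(Y_l)]$ via Fourier inversion, $\E[f(Y_l)] = (2\pi)^{-d}\int_{\R^d}\hat f(\xi)\phi_l(\xi)\,d\xi$. Because $Y_l$ has the same distribution as the reversed walk $X_1 X_2 \cdots X_l(x_0)$, one obtains the recursion $\phi_l(\xi) = \E[e^{i\langle\xi,v\rangle}\phi_{l-1}(\t(X_1)^{-1}\xi)]$ with $\phi_0(\xi) = e^{i\langle\xi,x_0\rangle}$, where $v := X_1(0)$ is the translation part of $X_1$. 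Thus $\phi_l = L^l\phi_0$ for the linear operator $(L\phi)(\xi) := \E[e^{i\langle\xi,v\rangle}\phi(\t(X_1)^{-1}\xi)]$, which preserves every sphere $\{|\xi|=r\}$ and restricts there to an operator $L_r$ on $L^2(S^{d-1})$. The theorem reduces to estimating the iterates $L_r^l$ uniformly in $r$.

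At $r=0$, $L_0\psi(\eta) = \E[\psi(\t(X_1)^{-1}\eta)]$ is the restriction of $T$ to right-$\SO(d-1)$-invariant functions on $\SO(d)$, so the spectral gap hypothesis yields $\|L_0^l\|_{L^2_0(S^{d-1})} \le C\rho^l$ for some $\rho < 1$. For small $r > 0$ I would expand $e^{ir\langle\eta,v\rangle}$ in Taylor series and invoke analytic perturbation theory: $L_r$ inherits a simple dominant eigenvalue and dominant eigenfunction analytic in $r$, whose low-order expansions identify the drift $y_0 = (I - \E[\t(X_1)])^{-1}\E[v]$ and the covariance of the centrally symmetric Gaussian $Z$; the Taylor remainder of order $r^\a$ is controlled by the $\a$-th moment assumption on $Y_1$. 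Integrating the Fourier inversion over $|\xi| \lesssim 1/\sqrt l$ then reproduces $\E[f(\sqrt l Z + y_0)]$ modulo the errors $l^{-(d+\a-2)/2}\|f\|_1$ (from the remainder at order $r^\a$) and $|x_0|^2 l^{-(d+2)/2}\|f\|_1$ (from the second-order expansion of $\phi_0$ in $x_0$).

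The main obstacle is to establish a uniform exponential bound $\|L_r^l\|_{L^2(S^{d-1})} \le C e^{-cl}$ for all $r \ge r_0 > 0$, where perturbation theory no longer applies. The hypothesis $d \ge 3$ is used here so that $\SO(d)$ is non-abelian and a spectral gap can exist at all, while the no-fixed-point hypothesis is decisive: any $L_r$-invariant function $\psi$ of unit modulus would force the a.s.\ identity $e^{ir\langle\eta,v\rangle}\psi(\t(X_1)^{-1}\eta) = \psi(\eta)$, from which one extracts a common fixed point of the $X_i$. Following the strategy of the second author's earlier work, I would combine the spectral gap of $T$ with oscillation of the phase $e^{ir\langle\eta,v\rangle}$ to show that a sufficiently high iterate of $L_r^*L_r$ is a strict contraction on $L^2(S^{d-1})$, uniformly for $r$ in compact subsets of $(0,\infty)$, with a separate tail argument for $r \to \infty$.

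To assemble the estimate, split the Fourier integral into the ranges $|\xi| \le 1/\sqrt l$ (Gaussian regime handled above), $1/\sqrt l \le |\xi| \le r_0$ (in which the dominant eigenvalue satisfies $|\lambda(r)|^l \le e^{-cr^2 l}$, giving exponential decay of $\phi_l$), and $|\xi| \ge r_0$ (uniform exponential bound just established). For the outer range, Cauchy--Schwarz on each sphere $\{|\xi| = r\}$ gives $|\int_{|\xi| \ge r_0}\hat f(\xi)\phi_l(\xi)\,d\xi| \le Ce^{-cl}\int_{r_0}^\infty r^{d-1}\|\hat f(r\cdot)\|_{L^2(S^{d-1})}\,dr$, and a second Cauchy--Schwarz in $r$ controls this by $\|f\|_{H^\sigma}$ for any $\sigma > d/2$. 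Via Sobolev embedding $W^{2,(d+1)/2} \hookrightarrow H^\sigma$ for some $\sigma > d/2$ (which one checks holds in every dimension $d \ge 3$; the exponent $(d+1)/2$ is precisely calibrated to this end), one arrives at the final error $e^{-cl}\|f\|_{W^{2,(d+1)/2}}$.
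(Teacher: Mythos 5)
Your overall architecture — Fourier inversion, disintegration over spheres, perturbation theory near $r=0$, uniform spectral estimates for $r$ away from $0$ — matches the paper's. But there is a decisive gap, and a secondary organizational error.

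\textbf{The main gap.} The entire weight of the theorem rests on the uniform estimate $\|S_r\| \le 1 - c\min\{1,r^2\}$ for \emph{all} $r>0$, i.e.\ the paper's Theorem~\ref{theorem:technical}. You assert this can be obtained by ``following the strategy of the second author's earlier work'' to make a high power of $L_r^*L_r$ a strict contraction, ``uniformly for $r$ in compact subsets of $(0,\infty)$, with a separate tail argument for $r\to\infty$.'' This does not work, and the paper explicitly says so in Section~\ref{section:idea}: the method of \cite{Var-Euclidean} yields only bounds for $\|S_r\varphi\|_2$ that depend on the Lipschitz norm of $\varphi$, not on $\|\varphi\|_2$ alone, and ``does not seem to be strong enough to give \eqref{equation:normestimate} even under the spectral gap assumption.'' Nor does $\|S_r\|<1$ for each fixed $r$ plus continuity plus some ``tail argument'' suffice: continuity in $r$ gives uniformity only on compact sets, and the behavior as $r\to\infty$ is not in any way simpler (if $\mu$ is finitely supported with irrational translation lengths, there is no decay or periodicity to exploit). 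The actual proof occupies Sections~\ref{section:preliminary}--\ref{section:bourgaingamburd2}: preliminary estimates showing $\|S_{r_i}\|\le 1-c|r_1-r_2|^2$ for at least one of two nearby parameters (Proposition~\ref{proposition:preliminary}); the non-concentration estimate $\nu_l(B(r,y))\le C r^{c_d}$ (Proposition~\ref{proposition:decay}), proved by the new idea of decomposing $\nu_l*\check\nu_l$ into a piece with Lipschitz Fourier transform and a piece whose Fourier transform decays on spheres, in order to transfer $L^2$ smallness from one sphere to a nearby one; Saxc\'e-type product estimates in $\SO(d)$; a Bourgain--Gamburd flattening argument adapted to $\Isom(\R^d)$ with a new ingredient (Lemma~\ref{lemma:translation}) extracting long pure translations; and a Littlewood--Paley decomposition of $L^2(S^{d-1})$ to assemble the norm bound. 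None of this appears in your proposal, and your appeal to oscillation of the phase combined with the spectral gap of $T$ is precisely the kind of soft argument the paper says is insufficient.

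\textbf{A secondary issue.} Your split at $|\xi|=1/\sqrt l$ fails in the middle range $1/\sqrt l\le|\xi|\le r_0$: there the bound $|\phi_l(\xi)|\le e^{-c|\xi|^2 l}$ is only $O(1)$ near $|\xi|=1/\sqrt l$, and $\int_{1/\sqrt l}^{r_0}r^{d-1}e^{-cr^2 l}\,dr\asymp l^{-d/2}$ is the same order as the main term, not an error. You must carry the comparison with the Gaussian approximant (the paper uses $\|S_r^l\psi_r - e^{-r^2 l\sigma}\|_2$ from \cite{Var-Euclidean}*{Proposition~19}) out to a larger cutoff --- the paper uses $|\xi|\le l^{-1/3}$ --- after which both $\phi_l$ and the Gaussian are superpolynomially small and may be treated separately. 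The remaining assembly (Cauchy--Schwarz on spheres, then in $r$, Sobolev control of $\hat f$) is fine and essentially matches the paper.
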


By centrally symmetric Gaussian variable, we mean one with $0$ mean and covariance matrix $\s\cdot\Id$.
The $L^2$-Sobolev norm in the second error term is defined by
\[
\|f\|_{W^{2,(d+1)/2}}^2=\int|\wh f(\xi)|^2(1+|\xi|)^{d+1}\igap d\xi.
\]

To illustrate the quality of our estimate, we formulate the following
immediate corollary, which we will prove in Section \ref{section:rwproof}.
Denote by $B(r,z)$ the ball of radius $r$ around the point $z\in \R^d$.
\begin{cor}\label{corollary:RW}
Under the assumptions of Theorem \ref{theorem:RW},
there is a point $y_0\in\R^d$
and numbers $A,\s,c>0$ all depending only on the law of $X_1$ such that
\begin{align*}
\P(Y_l\in B(r,z))&=Ar^dl^{-d/2}e^{-|z-y_0|^2/2l\s^2}+O(r^{d+2}l^{-\frac{d+2}{2}})\\
&\qquad{}+O(r^d(l^{-\frac{d+\a-2}{2}}+l^{-\frac{d+2}{2}}|x_0|^2))+ O(e^{-cl}r^{-1/2}).
\end{align*}
\end{cor}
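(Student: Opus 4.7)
The plan is to derive Corollary~\ref{corollary:RW} from Theorem~\ref{theorem:RW} by sandwiching the indicator $\mathbf 1_{B(r,z)}$ between smooth radial bumps $f_r^-\le \mathbf 1_{B(r,z)}\le f_r^+$, applying the theorem to each, and evaluating the resulting Gaussian expectation by Taylor expansion. Concretely, fix a smooth nonincreasing $\chi\colon\R\to[0,1]$ with $\chi=1$ on $(-\infty,0]$ and $\chi=0$ on $[1,\infty)$, and for a transition width $\d\in(0,r)$ to be chosen set
\[
f_r^+(x)=\chi((|x-z|-r)/\d),\qquad f_r^-(x)=\chi((|x-z|-r+\d)/\d).
\]
Both bumps are radial about $z$, supported in $B(r+\d,z)$, and satisfy $\|f_r^\pm\|_1=|B(r,z)|+O(r^{d-1}\d)$. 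The sandwich $\E f_r^-(Y_l)\le \P(Y_l\in B(r,z))\le \E f_r^+(Y_l)$ combined with Theorem~\ref{theorem:RW} shows that $\P(Y_l\in B(r,z))$ equals $\E f_r^\pm(\sqrt l\,Z+y_0)$ up to errors of order $r^d(l^{-(d+\a-2)/2}+l^{-(d+2)/2}|x_0|^2)+e^{-cl}\|f_r^\pm\|_{W^{2,(d+1)/2}}$ plus the gap $\E f_r^+(\sqrt l\,Z+y_0)-\E f_r^-(\sqrt l\,Z+y_0)$.

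For the Gaussian term I would split $|v-y_0|^2=|z-y_0|^2+2(v-z)\cdot(z-y_0)+|v-z|^2$ and Taylor-expand $e^{-|v-y_0|^2/2l\s^2}$ around $v=z$. The linear contribution in $v-z$ integrates to zero against the radial bump $f_r^\pm$, leaving a quadratic remainder of order $r^{d+2}l^{-(d+2)/2}$, after using that $e^{-|z-y_0|^2/2l\s^2}$ times any polynomial in $|z-y_0|^2/l$ is $O(1)$ to absorb the $z$-dependence of the remainder. The sandwich gap is at most $O(r^{d-1}\d\cdot l^{-d/2})$, so the choice $\d\asymp r^3/l$ packages everything into $O(r^{d+2}l^{-(d+2)/2})$ and leaves the main term $Ar^dl^{-d/2}e^{-|z-y_0|^2/2l\s^2}$ with $A=|B(1,0)|/(2\pi\s^2)^{d/2}$.

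The hard part is the Sobolev-norm error. A direct computation using that $f_r^\pm$ lives in a shell of width $\d$ with derivatives of order $k$ bounded by $O(\d^{-k})$ gives $\|f_r^\pm\|_{W^{2,(d+1)/2}}^2\asymp r^{d-1}\d^{-d}$, so with $\d\asymp r^3/l$ one obtains $\|f_r^\pm\|_{W^{2,(d+1)/2}}\asymp r^{-(2d+1)/2}l^{d/2}$, much bigger than the target $r^{-1/2}$ in the corollary. The fix is to replace the constant $c$ from Theorem~\ref{theorem:RW} by a strictly smaller $c'>0$ in the corollary: for $r$ not exponentially small in $l$ the factor $e^{-(c-c')l}$ absorbs the polynomial blow-up in $l$ and $r^{-1}$, while for $r$ so small that $e^{-c'l}r^{-1/2}\ge 1$ the stated bound already dominates the trivial $\P(Y_l\in B(r,z))\le 1$. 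Choosing $c'>0$ small enough that these two regimes overlap completes the argument.
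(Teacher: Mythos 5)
Your proposal is correct and follows the same scheme as the paper's proof: sandwich the indicator between radial smooth bumps, invoke Theorem~\ref{theorem:RW}, and Taylor-expand the Gaussian density, with the linear term killed by radial symmetry. The one meaningful divergence is your choice of transition width $\d\asymp r^3/l$, picked so that the sandwich gap $O(r^{d-1}\d\,l^{-d/2})$ lands exactly on the error term $O(r^{d+2}l^{-(d+2)/2})$. This overshoots: it inflates $\|f_r^\pm\|_{W^{2,(d+1)/2}}$ to order $r^{-(2d+1)/2}l^{d/2}$, off the target $r^{-1/2}$ by a factor $r^{-d}l^{d/2}$, and that extraneous power of $r^{-1}$ is exactly what forces your two-regime argument (shrink $c$, then fall back on the trivial bound $\P\le 1$ when $r$ is exponentially small in $l$). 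The paper instead takes the bump supported on $B(r(1+1/l),z)$, i.e.\ $\d\asymp r/l$; then $\|f_{r,z}\|_{W^{2,(d+1)/2}}\le C\,l^{d/2}r^{-1/2}$ matches the target in $r$ and is off only by a power of $l$, which is absorbed into $e^{-cl}$ uniformly in $r$ with no case analysis, while the now-larger sandwich gap $O(r^d l^{-(d+2)/2})$ is still dominated by the error term $O(r^d l^{-(d+\a-2)/2})$ precisely because $\a\le 4$. Your detour is recoverable, as you show, but the observation you missed is that the sandwich gap need not match $r^{d+2}l^{-(d+2)/2}$; the hypothesis $\a\le 4$ is there to let the coarser gap $r^d l^{-(d+2)/2}$ be absorbed, making the delicate split on the size of $r$ unnecessary.
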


Observe that the error terms are of lower order of magnitude than the main term as long as
$r+|y_0-z|\lesssim\sqrt l$ and  $r>Ce^{-cl}$ .
When $X_1$ is finitely supported, this is optimal up to the constants, since the number of points $Y_l$ can
attain grows exponentially.

A theorem similar to Theorem \ref{theorem:RW} has been given in \cite{Var-Euclidean}*{Theorem 3}.
That result holds in greater generality but provides weaker error terms.
In particular, when $d\ge 3$, and we replace the spectral gap condition by requiring
merely that the support of $\t(X_1)$ generates a dense subgroup of $\SO(d)$, one can get the
same conclusion with the second error term replaced by $O(e^{-c{l^{1/4}}})$.

There are no examples known to the authors when  $\supp\t(X_1)$ generates a dense subgroup of $\SO(d)$,
and the operator $T$ does not have spectral gap.
In fact, it is possible that the above condition about denseness implies the spectral gap condition, but this
is not known in general.
However,  Bourgain and Gamburd proved this in the following important special case.

\begin{AbcTheorem}[\cite{BG-SU2}]\label{theorem:SO3}
Let $X$ be a random element of $\SO(3)$ and suppose that $\supp X$ is finite and consists of matrices with
algebraic entries.
Suppose further that $\supp X$ generates a dense subgroup.
Then the operator
\[
T f(g)=\E(f(Xg))
\]
acting on $L^2(\SO(3))$ has spectral gap.
\end{AbcTheorem}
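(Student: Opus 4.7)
The plan is to follow the Bourgain-Gamburd machine, which converts the spectral gap problem for a random walk on a compact simple Lie group into three quantitative inputs on the step distribution. Write $\mu$ for the law of $X$ (made symmetric if needed) and $m$ for Haar measure on $\SO(3)$. Via Peter-Weyl, the spectral gap of $T$ is equivalent to exponential decay of $\|\mu^{*l}\ast\eta_{\rho_l}-m\|_2$, where $\eta_\rho$ is an approximate identity at scale $\rho$; the crucial step is a single-scale ``$L^2$-flattening'' estimate $\|\mu^{*l}\ast\eta_{\rho_l}\|_2\le \rho_l^{-\e}$ at scale $\rho_l=e^{-cl}$, for $\e$ small. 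The high multiplicity of eigenvalues of $T$ near $1$, combined with the dimension lower bounds for irreducible representations of $\SO(3)$, upgrades this single-scale bound to the desired exponential decay.

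To establish the flattening, I would verify the three standard inputs:
\begin{enumerate}
\item[(a)] \emph{Non-concentration in balls}: $\mu^{*l}(B(g,\rho_l))\le \rho_l^{\kappa}$ uniformly in $g\in\SO(3)$, for some $\kappa>0$;
\item[(b)] \emph{Non-concentration near proper closed subgroups}: the analogue of (a) with $B(g,\rho_l)$ replaced by the $\rho_l$-neighborhood of any proper closed $H<\SO(3)$;
\item[(c)] The \emph{product theorem} of Bourgain-Gamburd in $\SU(2)$: any finite symmetric $A\subset \SO(3)$ in an appropriate size range, not contained near a proper closed subgroup, satisfies $|AAA|\ge |A|^{1+\delta}$.
\end{enumerate}
These combine via dyadic pigeonholing and Pl\"unnecke-Ruzsa: failure of the flattening extracts a level set $A$ of $\mu^{*l}$ with small tripling, (a) and (b) ensure $A$ is not concentrated in a ball or near a subgroup, and (c) then forces $|AAA|\ge |A|^{1+\delta}$, contradicting the small-tripling.

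The algebraic entries hypothesis is what powers (a) and (b). Each element of $\supp\mu^{*l}$ is a product of at most $l$ generators from a fixed finite set of matrices over a number field $K$, hence has logarithmic height $O(l)$. By the Tits alternative, the dense subgroup generated by $\supp\mu$ contains a nonabelian free subgroup, so distinct length-$l$ words in this subgroup remain distinct as elements of $\SO(3)$. A Liouville-type lower bound for nontrivial algebraic numbers of bounded degree and height $e^{O(l)}$ then separates any two such words by at least $e^{-Cl}$ in $\SO(3)$; choosing $c>C$ yields the polynomial bound in (a). Statement (b) reduces to the same separation combined with a compactness/limit argument: if $\mu^{*l}$ concentrated near a proper closed subgroup $H$, then $\supp\mu$ itself would lie in a proper closed subgroup, contradicting denseness.

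The main obstacle is the product theorem (c), the deepest ingredient of the scheme. Its proof relies on a discretized sum-product estimate for subsets of $\R$ together with a careful analysis of approximate subgroups in the compact simple Lie group $\SU(2)$, and this is the step one most naturally invokes as a black box rather than reproving. The other components, while standard in outline, demand uniformity across all dyadic scales and thus careful bookkeeping in the flattening argument.
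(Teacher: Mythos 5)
Theorem~\ref{theorem:SO3} is quoted by the paper purely as an external result of Bourgain and Gamburd, cited from \cite{BG-SU2}; the paper supplies no proof of it, so there is no internal argument to compare against. Taken on its own terms, your sketch correctly reproduces the skeleton of the Bourgain--Gamburd machine for compact simple groups: Peter--Weyl reduction to $L^2$-flattening of $\mu^{*l}$ at scale $e^{-cl}$, the dichotomy via Balog--Szemer\'edi--Gowers and the product theorem in $\SU(2)$, and Diophantine non-concentration as the input powered by algebraicity. That is indeed the route taken in \cite{BG-SU2}.

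Two steps in your sketch do have real gaps. First, the deduction of non-concentration (a) from separation of words is incomplete. The Liouville-type height bound gives you that each ball of radius $e^{-cl}$ contains at most one element of $\supp\mu^{*l}$, but you still must bound $\max_g\mu^{*l}(\{g\})$ by $e^{-\kappa l}$, and distinctness of words does not give this. What is actually used is Kesten's theorem: the Tits alternative makes the generated subgroup nonamenable, hence the symmetric random walk has $\|T\|_{\ell^2(\Gamma)}<1$, which gives $\mu^{*2l}(\{1\})\le\|T\|^{2l}$ and, by Cauchy--Schwarz, $\max_g\mu^{*l}(\{g\})\le e^{-\kappa l}$. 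Your phrase ``distinct length-$l$ words remain distinct'' conflates this with freeness and does not close the estimate. Second, your handling of (b), non-concentration near proper closed subgroups, is qualitative (``compactness/limit argument''), but the flattening step consumes a \emph{uniform, quantitative} bound $\mu^{*l}\bigl(H^{(\rho_l)}\bigr)\le\rho_l^{\kappa}$ at every scale $\rho_l=e^{-cl}$. A compactness argument only rules out eventual concentration near a fixed $H$, which is far weaker and in particular not uniform over all conjugates of $\SO(2)$ and all $l$. In \cite{BG-SU2} this is obtained by the same Diophantine/height machinery as (a): if a length-$l$ word is within $e^{-cl}$ of a torus, the associated algebraic quantity is either zero or bounded below by $e^{-Cl}$, so for $c>C$ the word lies exactly in the torus; one then argues, again via nonamenability and escape from subvarieties, that only a subexponential fraction of words can do so.
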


This theorem has been generalized to $\SU(d)$, $d\ge2$ in a subsequent paper \cite{BG-SUd}, and has been extended
 very recently by Benoist and Saxc\'e to general simple compact Lie groups and in particular for $\SO(d)$, $d\ge3$ \cites{Benoist-deSaxce,deSaxce-product}.
The condition on algebraicty seems essential for the argument and its removal would probably
require significant new ideas.

The problem of studying random walks on $\Isom(\R^d)$ can be traced back to a paper of Arnold and Krylov
\cite{AK-uniform}.
A central limit theorem (describing the behaviour of $Y_l$ on scale $\sqrt l$) was given by Tutubalin \cite{Tut-CLT}
in the $d=3$ case and has been subsequently generalized by several authors.
A ratio limit theorem (describing the behaviour of $Y_l$ on scale $1$) was given by Kazhdan \cite{Kaz-uniform}
and Guivarc'h \cite{Gui-uniform} in the $d=2$ case.
For further details about the history of the problem we refer to \cite{Var-Euclidean} and its references.

\subsection{Self-similar measures}
\label{section:selfsimilar}

 The second problem studied in this paper is the smoothness of self-similar measures.
Let $\eta$ be a probability measure supported on contractive similarities of $\R^d$.
A contractive similarity is a map of the form $x\mapsto \l\cdot\s(x)+v$, where $0<\l<1$, $\s\in\SO(d)$ and $v\in\R^d$.
Let $\nu$ be a probability measure on $\R^d$ and let $X$ be a random similarity with law $\eta$
and $Y$ be an independent random point $Y\in\R^d$ with law $\nu$.
Suppose that $\E(|X(0)|)<\infty$.
We say that $\nu$ is \emph{$\eta$-stationary}, if the law of $X(Y)$ is also $\nu$.
It is easily seen that for every $\eta$ supported on contractive similarities, there is a unique
$\eta$-stationary measure.
A measure $\nu$ is called self-similar if it is $\eta$-stationary for some probability measure $\eta$ supported
on contractive similarities.
For general properties of self-similar measures we refer to \cite{Hut-self-similar}.

An extensively studied class of self-similar measures are the  Bernoulli convolutions
introduced by Jessen and Wintner \cite{JW-Bernoulli} in the 30's.
Let $0<\l<1$ be a number and let $\nu_\l$ be the law of the random power series
$\sum_{n=0}^{\infty}A_n\l^n$,
where $A_n$ are independent Bernoulli random variables such that $\P(A_n=1)=\P(A_n=-1)=1/2$ for all $n$.
It is easily seen that $\nu_l$ is self-similar:
Take $\eta_\l$ to be the probability measure supported on the two similarities $x\mapsto \l x\pm 1$ putting $1/2$
mass on each.
Then $\nu_\l$ is $\eta_\l$-stationary.

It is easily seen that $\nu_\l$ is a singular measure supported on a Cantor set if $\l<1/2$.
Moreover, $\nu_{1/2}$ is the normalized Lebesgue measure restricted to the interval $[-2,2]$.
This follows from the fact that almost all numbers in that interval have a unique binary expansion.
When $\l>1/2$, $\nu_\l$ is more mysterious.
Erd\H os \cite{Erd-Bernoulli1}, \cite{Erd-Bernoulli2} studied the regularity properties of $\nu_\l$.
He showed that there is a number $\l_0<1$ such that for almost all $\l>\l_0$, $\nu_\l$ is absolutely continuous.
This was extended to almost all $\l>1/2$ by Solomyak \cite{Sol-Bernoulli}.
A remarkable recent advance was made by Hochman and Shmerkin \cites{Hoc-Bernoulli, Shm-Bernoulli}; specifically 
Shmerkin shows (based on the result of Hochman) that the set of parameters $1/2<\l<1$,
for which  $\nu_\l$ is not absolutely continuous with respect to Lebesgue measure is of
Hausdorff dimension 0.
On the other hand, Erd\H os observed that $\nu_\l$ is singular if $\l^{-1}$ is a Pisot number,
e.g. $\l=(\sqrt5-1)/2$.
It is a long standing open problem whether there is a number $\bar\l<1$
such that $\nu_\l$ is absolutely continuous  for {\em all} $\l>\bar\l$.
Moreover, it is not known whether in the interesting range $1/2<\l<1$ there are any examples for singular $\nu_\l$ apart from
those when $\l^{-1}$ is Pisot.
For more on Bernoulli convolution we refer to \cite{PSS-survey}.

Unfortunately, Bernoulli convolutions are not amenable to our methods.
However, we can answer some analogues of this question in dimension 3 and above.
If $\k: x\mapsto \l\cdot\s(x)+v$, then we write $\l(\k)=\l$ and $\t(\k)=\s$, and $g(\k)$
denote the isometry $x\mapsto\s(x)+v$.
We note that $g(\k)$ is not a homomorphism and depend on our choice of origin.

\begin{thm}\label{theorem:selfsimilar}
Let $\eta$ be a probability measure supported on finitely many contracting similarities of $\R^d$
without a common fixed point for some $d\ge 3$.
Let $X$ be a random similarity with law $\eta$ and suppose that 
the operator
\[
Tf(\s)=\E(f(\t(X)^{-1}\s))
\]
on $L^2(\SO(d))$ has spectral gap.
Let $n$ be an integer.
Then there is a number $\bar\l<1$ such that the unique $\eta$-stationary measure is absolutely continuous
with $n$ times differentiable density, if $\l(X)>\bar\l$ almost surely.
The number $\bar\l$ depends on $d$, $n$, the spectral gap for $T$, the cardinality of $\supp\eta$ and the minimal value of the probabilities $\P(X=g)$ for $g \in \supp\eta$.
\end{thm}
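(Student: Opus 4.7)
The plan is to derive polynomial Fourier decay $|\wh\nu(\xi)|\le C(1+|\xi|)^{-d-n-1}$, which by Fourier inversion gives a density in $C^n$. The strength of the polynomial will depend on $\bar\l$ being close to $1$, so any desired exponent becomes achievable by shrinking $1-\bar\l$.

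Writing each random similarity as $X_i(x)=\l_i\s_i x+v_i$ and iterating the $\eta$-stationarity of $\nu$ yields
\[
\wh\nu(\xi)=\E\bigl[e^{i\xi\cdot W_l}\,\wh\nu(\Lambda_l\Sigma_l^T\xi)\bigr],
\]
where $\Lambda_l=\l_1\cdots\l_l$, $\Sigma_l=\s_1\cdots\s_l$, and $W_l=\sum_{k=1}^l\Lambda_{k-1}\Sigma_{k-1}v_k$. For $|\xi|=R$ large, I would choose $l=l(R)$ to be the least integer with $\Lambda_l R\le 1$, giving $l\lesssim\log R/\log(1/\bar\l)$; the factor $\wh\nu(\Lambda_l\Sigma_l^T\xi)$ is then trivially bounded by $1$. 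The goal is to show the expectation is $O(e^{-cl})$ for a constant $c>0$ that depends on the spectral gap of $T$, the cardinality of $\supp\eta$, and $\min_{g\in\supp\eta}\P(X=g)$, but not on $\bar\l$. This yields $|\wh\nu(\xi)|\le CR^{-c/\log(1/\bar\l)}$, and taking $\bar\l$ close enough to $1$ forces the exponent to exceed $d+n+1$, completing the argument.

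The core step is the exponential bound, which requires a real decoupling of $W_l$ from $\Sigma_l$; a crude Cauchy--Schwarz loses the oscillation in $W_l$ and fails. I would mirror the operator-theoretic framework underlying Theorem~\ref{theorem:RW}: introduce a family of twisted transfer operators $S_\xi$ on $L^2(\SO(d))$, built from the random similarity step and the character $e^{i\xi\cdot v}$. Because each application of $S_\xi$ also contracts $\xi$ by a factor of $\l_i$, unwinding the expectation step by step sweeps the effective parameter $\xi_k=\Lambda_{k-1}\Sigma_{k-1}^T\xi$ from $\xi$ (large) down to $\Lambda_l\Sigma_l^T\xi$ (bounded), crossing different spectral regimes. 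The main obstacle is establishing a uniform spectral gap across this sweep: for small effective $\xi_k$ one can use perturbation of $T=S_0$ via the spectral-gap hypothesis, while for larger $\xi_k$ one must adapt the spectral estimates developed for Theorem~\ref{theorem:RW}, which were proved for isometry walks. For $\bar\l$ sufficiently close to $1$ the similarity operator differs from its isometry counterpart by one of small norm, so the spectral gap is preserved across all scales encountered during the sweep; this is precisely where the closeness of $\bar\l$ to $1$ enters, and it also dictates the dependence of $\bar\l$ on $n$ in the statement.
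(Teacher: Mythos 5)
Your high-level strategy is the right one (polynomial Fourier decay with an exponent that grows like $1/\log(\bar\l^{-1})$, hence tunable by taking $\bar\l\to1$), and for the special case where all $\l(\k)$ are equal the paper does essentially what you outline. But there are two issues, one cosmetic and one substantial.

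The cosmetic issue: you aim at a pointwise bound $|\wh\nu(\xi)|\le C(1+|\xi|)^{-d-n-1}$. What the operator framework actually gives is the $L^2$-on-spheres bound $\|\Res_r\wh\nu\|_2\le Cr^{-(d+n+1)}$, which after a Cauchy--Schwarz on each sphere is enough for $\int|\wh\nu(\xi)|(1+|\xi|)^n\igap d\xi<\infty$ and hence $C^n$ density. Pointwise bounds are not what these methods produce.

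The substantial gap: you do not address the core obstruction introduced by varying contraction ratios, and the ``uniform spectral gap across this sweep'' step is where the argument actually lives. When the $\l_i$ differ, iterating stationarity once gives
\[
\Res_r(\wh\nu)=\sum_i p_i\,\rho_r(g(\k_i))\,\Res_{\l_i r}(\wh\nu),
\]
a mixture in which each $\rho_r(g(\k_i))$ is a single unitary --- not an averaging operator --- so no individual term contracts, and there is no sphere on which the whole right-hand side lives. Your ``similarity operator differs from its isometry counterpart by small norm'' remark is a red herring: the paper projects to isometries via $g$ in exactly the same way, and the problem is not similarity vs.\ isometry but that the radius parameter is stochastic. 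The paper resolves this by passing to a high convolution power $\eta^{*(l_1)}$, grouping terms by common contraction ratio $\ula^\ua$, and then invoking Ab\'ert's theorem (Theorem~\ref{theorem:Abert}) to show that at least one constant-ratio subpiece $\eta_0$ of $\eta^{*(l_1)}$ carries a spectral gap $\|\cR_0(\t(\mu_0))\|<1$; only then can Corollary~\ref{corollary:general} be applied to $\mu_0=g(\eta_0)$ to get $\|\rho_r(\mu_0)\|\le1-c\min(1,r^2)$, with the remaining mass $(1-q_0)\eta_1$ handled trivially since the relevant operators have norm at most $1$. Absent some analogue of this extraction step, your sweep has no mechanism producing exponential decay in $l$: you are left with an average of unitaries whose phases $e^{i\xi\cdot W_l}$ are correlated with the argument $\Lambda_l\Sigma_l^T\xi$, exactly the coupling you flag but do not break.

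A minor additional caveat: at small radius the spectral gap for $T=S_0$ does not immediately control $S_r$ because $S_0$ has eigenvalue $1$ on constants; the preliminary estimates in Section~\ref{section:preliminary} are needed there, and you should cite them rather than ``perturbation.''
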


There is an analogy between self-similar measures and Furstenberg measures associated to random walks on
non-compact semisimple Lie groups.
Bourgain proved results about the absolute continuity of  Furstenberg measures, which are related to
Theorem \ref{theorem:selfsimilar}.
We refer the reader to the papers \cite{Bourgain-SL2-ac, Bourgain-expansion-AB} for more details.

\subsection{Some ideas of the proofs}
\label{section:idea}

We outline the main ideas in the paper.
We define a family of operators $S_r$ for $r>0$ acting on the space $L^2(S^{d-1})$ that provide
a natural link between the two problems explained above.
These operators were introduced by Kazhdan \cite{Kaz-uniform} and Guivarc'h \cite{Gui-uniform} in their
works of studying  random walks on $\Isom(\R^2)$.

Let $X\in\Isom(\R^d)$ be a random element and write $v(X)\in\R^d$ for its translation part
and $\t(X)\in\SO(d)$ for its rotation part.
Let $\f\in L^2(S^{d-1})$.
Then we write
\[
S_r\f(\xi)=e^{-2\pi ir\langle\xi,v(X)\rangle}\f(\t^{-1}(X)\xi).
\]
This defines the operator $S_r$, which depends on the law of $X$.
We defer the more detailed discussion of these operators and their relation to random walks and self-similar measures  to Section~ \ref{section:notation}.
In the context of self-similar measures,
we use the ``projection'' $g_*\eta$ of $\eta$ to the isometry group in place of the law of $X$.

In this paper we will prove (see Theorem \ref{theorem:technical} in Section \ref{section:notation})
the norm estimates
\be\label{equation:normestimate}
\|S_r\|\le1-c\min\{1,r^2\}
\ee
with some constant $c$ depending only on the law of $X$ in the notation of Section \ref{section:RW}.
Both Theorems \ref{theorem:RW} and \ref{theorem:selfsimilar} can be deduced from
\eqref{equation:normestimate}.

Guivarc'h \cite{Gui-uniform} proved in the $d=2$ case the estimates
\be\label{equation:ineffective}
\|S_r\|\le1-cr^2\quad \text{for $r<1$ and}\quad
\|S_r\|\le1-c_r\quad  \text{for $r\ge1$}
\ee
with a number $c_r$ depending also on $r$.
However, this argument depends on the special feature of the two dimensional case that $\SO(2)$
is commutative.

In a more recent paper Conze and Guivarc'h \cite{CG-spectralgap}*{Theorem 4.6}
proved the estimates \eqref{equation:ineffective} in higher dimension under the assumption
that the operator $T$ as defined in \eqref{equation:defT} has spectral gap.
This is the same assumption as ours in Theorem \ref{theorem:technical}, however, we obtain
the uniform estimate \eqref{equation:normestimate}, which is needed for both of our applications.

The paper \cite{Var-Euclidean} also studies the operators $S_r$.
In that paper no spectral gap assumption is posed, instead, it is based on a weaker property
that can be verified in great generality.
On the other hand, the estimates obtained there are much weaker than \eqref{equation:normestimate}.
In fact that paper only provides bounds for $\|S_r\f\|_2$ which depend also on the Lipschitz norm
of $\f$.
To avoid technicalities we do not give the details here, just refer to the paper \cite{Var-Euclidean}.
We also note that the method in \cite{Var-Euclidean}
does not seem to be strong enough to give \eqref{equation:normestimate}
even under the spectral gap assumption.
To indicate the improvement achieved by the methods of the current paper, we note that
the methods of \cite{Var-Euclidean}
can prove Theorem \ref{theorem:RW} only with the second error term replaced
by $O(e^{-cl^{1/2}})$.

The operators $S_r$ are related to the operator $T$ defined in \eqref{equation:defT} and
they are amenable to the method of Bourgain and Gamburd \cite{BG-SU2}, \cite{BG-SUd}.
We adapt this method to the problem we consider.
This will be discussed in more detail later.
Now we mention only the most crucial new ingredient in our proof, which is an estimate of the following type:
\be\label{equation:introNC}
\P(Y_l\in B(r,y))\le C r^{c_d}.
\ee
Here $Y_l$ is the random walk as defined in Section \ref{section:RW}, $r>0$ is a number, $B(r,y)$
is the ball of radius $r$ around a point $y\in\R^d$, $l>C\log (r^{-1})$ is an integer,
$C$ is a number depending on the law of $X_1$ and $c_d>0$ is a number depending on the dimension.
In words, this means that after $\log(r^{-1})$ steps, the probability that the random walk is
in a given ball of radius $r$ is bounded by a polynomial of $r$.

If we assumed that the support of $X_1$ is concentrated on isometries which have both rotation and
translation parts algebraic, then \eqref{equation:introNC} would follow from simple Diophantine considerations.
In fact, this is very related to how algebraicity is used by Bourgain and Gamburd in their proof of Theorem \ref{theorem:SO3}.
Without Diophantine assumptions \eqref{equation:introNC} is more difficult, and requires new ideas.

To establish \eqref{equation:introNC}, we estimate the Fourier transform of the law of $Y_l$,
that is the function $\E(e^{-2\pi i\langle\xi,Y_l\rangle})$.
The required estimate on the Fourier transform would follow immediately from the norm estimates
\eqref{equation:normestimate};
but our argument works in the opposite direction, and uses \eqref{equation:introNC} to prove \eqref{equation:normestimate} and so this does not help us.
What does help us is the following weaker statement for which there is a relatively simple direct proof:
If $R_1>R_2>0$ are two numbers such that $|R_1-R_2|<c$, where $c$ is a number depending only
on the law of $X_1$, then $\|S_{R_i}\|<1-c|R_1-R_2|^2$ for $i=1$ or $i=2$.
That is, we are able to establish \eqref{equation:normestimate} for at least one of two nearby values
of the parameter.

This statement allows us to estimate $\E(e^{-2\pi i\langle\xi,Y_l\rangle})$ on one of  two nearby spheres.
Then we use a simple fact which holds for all probability measures on $\R^d$: if the $L^2$-average of
the Fourier transform is small on a sphere then it is also small on nearby spheres.
This can be verified by decomposing the measure into two parts, one which has  Fourier transform
of small Lipschitz norm and one whose average Fourier transform on spheres decays fast.
This will conclude the proof of \eqref{equation:introNC}.

Finally, we mention that in the paper \cite{LV-affine} we prove an analogue of the results of this paper
in the group $\SL_d(\F_p)\ltimes \F_p$.
That paper follows a similar scheme and exhibits some of the ideas of this paper in a technically easier
setting.

\subsection{Organization of the paper}
In Section \ref{section:notation}, we introduce some notation that will be used throughout the
paper, in particular, we explain the operators $S_r$ in more details.
We will also state there a technical result, Theorem \ref{theorem:technical}, which will be used later
to deduce both Theorems \ref{theorem:RW} and \ref{theorem:selfsimilar}.
Sections \ref{section:preliminary}--\ref{section:bourgaingamburd2} are devoted to the proof of Theorem \ref{theorem:technical}.
Section \ref{section:preliminary} provides some preliminary norm estimates that we mentioned above, that is,
we bound the norm of $S_r$ for one of two nearby values of the parameter.
In Section \ref{section:decay} we prove the non-concentration estimate \eqref{equation:introNC}.
We provide some background material on sets and measures of ``large dimension" in Section \ref{section:compactgroups}.
This is not very new, but it is unavailable in the literature in the form we need it.
In Sections \ref{section:bourgaingamburd1} and \ref{section:bourgaingamburd2}
we recall the Bourgain-Gamburd method and finish the
proof of Theorem \ref{theorem:technical}.
As we will see in the next section, Theorem \ref{theorem:technical} is stated under some convenient
simplifying assumptions.
We reduce the general situation to this special setting in Section \ref{section:general},
where we formulate and prove Corollary \ref{corollary:general}.
Finally in Sections \ref{section:rwproof} and \ref{section:ssproof} we deduce Theorems \ref{theorem:RW}
and \ref{theorem:selfsimilar} respectively.
Deducing Theorem \ref{theorem:selfsimilar} from our spectral gap estimate \eqref{equation:normestimate} 
is much simpler in the case when all the contraction
factors equal.
We first give the proof of this case, and we also give an estimate on $\bar\l$, see Theorem \ref{theorem:onelambda}.
Then we turn to the general case, where we also use a result of Ab\'ert \cite{Abe-spectral-gap}.

\subsection*{Acknowledgement}
We thank Mikl\'os Ab\'ert for helpful discussions and for making his paper \cite{Abe-spectral-gap} available
to us before its publication.
His result enabled us to treat self-similar measures with varying contraction ratios.
We are also grateful to the referees for their careful reading of our paper.

Part of this work was conducted while E.L. was a fellow at the Israeli Institute for Advanced Studies. E.L. would like to thank the Institute for providing ideal working conditions.

\section{Notation}
\label{section:notation}

We identify the group of orientation preserving isometries of the $d$-dimensional
Euclidean space with the
semidirect product $\Isom(\R^d)=\R^d\rtimes \SO(d)$.
For $g=(v,\t)\in\R^d\rtimes \SO(d)$ and a point $x\in\R^d$
we write
\[
g(x)=v+\t x,
\]
and we define the product of two isometries by
\[
(v_1,\t_1)(v_2,\t_2)=(v_1+\t_1 v_2,\t_1\t_2).
\]
If $g$ is an isometry, we write $v(g)$ for the translation
component and $\t(g)$ for the rotation component of $g$ in the
above semidirect decomposition.
With this notation, the inverse of $g$ is given by the formula
\[
g^{-1}(x)=-\t(g)^{-1}v(g)+\t(g)^{-1}x.
\]

Let $\mu\in\cM(\Isom(\R^d))$, that is a probability measure on $\Isom(\R^d)$.
Define the convolution $\mu*\mu$ in the usual way by
\[
\int_{\Isom(\R^d)} f(g)\igap d\mu*\mu(g)
=\int_{\Isom(\R^d)}\int_{\Isom(\R^d)} f(g_1g_2)\igap d\mu(g_1)d\mu(g_2),
\]
for $f\in C(\Isom(\R^d))$ and
write
\[
\mu^{*(l)}=\underbrace{\mu*\cdots *\mu}_{l-{\rm fold}}
\]
for the $l$-fold convolution.
With this notation, $\mu^{*(l)}$ is the distribution of the
product of $l$ independent random element of $\Isom(\R^d)$
with law $\mu$.
We define the measure $\check\mu$ by the formula
\begin{equation}\label{eq_defcheck}
\int_{\Isom(\R^d)}f(g)\igap d\check\mu(g)
=\int_{\Isom(\R^d)}f(g^{-1})\igap d\mu(g),
\end{equation}
for $f\in C(\Isom(\R^d))$ and say that $\mu$ is \emph{symmetric} if $\check\mu=\mu$.
The measure $\mu$ also acts on measures on $\R^d$ in the following way:
If $\nu\in\cM(\R^d)$, we can define another measure
$\mu.\nu$ on $\R^d$ by:
\be\label{equation:mudotnu}
\int_{\R^d} f(x)\igap d\mu.\nu(x)
=\int_{\Isom(\R^d)}\int_{\R^d}f(g(x))\igap d\mu(g)d\nu(x),
\ee
for $f\in C(\R^d)$.

We write $\d_{x_0}$ for the Dirac delta measure concentrated
at the point $x_0$.
With this notation, the law of the $l$th step of the random walk is
\[
\mu^{*(l)}.\d_{x_0}.
\]
It a simple calculation to check that
\[
\mu^{*(l+1)}.\d_{x_0}=\mu.(\mu^{*(l)}.\d_{x_0}).
\]
Hence our main goal is to understand the operation $\nu\mapsto\mu.\nu$.

This is achieved by studying the Fourier transform, which is given
by the formula
\[
\wh{\nu}(\xi)=\int e(\langle\xi, x\rangle)\igap d\nu(x),
\]
where $e(x):=e^{-2\pi ix}$.
For the Fourier transform of $\mu.\nu$ we get
\begin{align}
\Fou{(\mu.\nu)}{\xi}
&=\int e(\langle\xi,g(x)\rangle) \igap d\mu(g)d\nu(x)\nonumber\\
&=\int e(\langle\xi,v(g)+\t(g)(x)\rangle)
\igap d\mu(g)d\nu(x)\nonumber\\
\label{equation:Fourier}
&= \int e(\langle\xi,v(g)\rangle)
\wh{\nu}(\t(g)^{-1}\xi)\igap d\mu(g).
\end{align}

This formula shows that the action of $\mu$ on the Fourier transform
of $\nu$ can be disintegrated with respect to spheres centered at
the origin.
For every $r\ge0$,
we define a unitary representation of the group $\Isom(\R^d)$
on the space $L^2(S^{d-1})$.
Let
\be\label{eq_defrho}
\rho_r(g)\f(\xi)=e(r\langle\xi,v(g)\rangle)
\f(\t(g)^{-1}\xi)
\ee
for $g\in\Isom(\R^d)$, $\f\in L^2(S^{d-1})$ and $\xi\in S^{d-1}$.
We denote the character appearing in the definition of $\rho_r(g)$ by
$\omega_r(g): S^{d-1}\to \C$,
\[
\omega_r(g)(\xi):=e(r\langle\xi,v(g)\rangle).
\]
We also define the operator
\be\label{eq_defSr}
S_r(\f)=\int\rho_r(g)(\f)\igap d\mu(g).
\ee

For a function $\f\in C(\R^d)$ and $r\ge0$,
we denote by $\Res_r\f$ its restriction
to the sphere of radius $r$.
I.e. $\Res_r: C(\R^d)\to C(S^{d-1})$ is an operator
defined by
$[\Res_r \f](\xi)=\f(r\xi)$
for $|\xi|=1$.
With this notation, we can write \eqref{equation:Fourier} as
\be\label{equation:Sr}
\Res_r(\wh{\mu.\nu})(\xi)=S_r(\Res_r \wh\nu)(\xi).
\ee

We denote by $\cR$
the (left) regular representation of $\SO(d)$, that is we write
\[
\cR(\t)\f(\s)=\f(\t^{-1}\s)
\]
for a function $\f\in L^2(\SO(d))$.
In addition, we denote by $\cR_0$ the restriction of $\cR$ to $L_0^2(\SO(d))$.

We recall from Section \ref{section:RW} that $T$ is an operator acting on the
space $L^2(\SO(d))$ defined by
\[
T\f(\s)=\int \f(\t(g)^{-1}\s)\igap d\mu(g).
\]
More generally, if $\pi$ is a representation of a group $G$ and $\mu$ is a probability measure
on it, then we write
\be\label{equation:piofmu}
\pi(\mu)=\int\pi(g) \igap d\mu(g),
\ee
which is an operator on the representation space of $\pi$.
With this notation, we have $S_r=\rho_r(\mu)$ and $T=\cR(\t(\mu))$.

Now we formulate the main technical result of the paper.
We will use this to deduce the theorems stated in the introduction.
We make some simplifying assumptions, which make our statements and calculations
easier.
These are not serious restrictions of generality, and the general case can be
reduced to the special case when these conditions hold.
The following assumptions are assumed to hold throughout Sections~\ref{section:preliminary}--\ref{section:bourgaingamburd2}:
\be
\mu=\check\mu_0*\mu_0\label{equation:symmetric}
\ee
for some $\mu_0\in\cM(\Isom(\R^d))$,
\begin{align}
\|T\f\|_2&\le\frac{\|\f\|_2}{2},\qquad\text{for all $\f\in L^2_0(\SO(d))$}\label{equation:gap}\\
\int v(g)d\mu(g)&=0,\label{equation:centered}\\
\int |v(g)|^2d\mu(g)&=1,\label{equation:moment2}\\
M:=\int |v(g)|^3d\mu(g)&<\infty.\label{equation:moment3}
\end{align}

We note that \eqref{equation:centered} and \eqref{equation:moment2} amounts to
a suitable choice of origin and normalization.
The conditions \eqref{equation:symmetric} and \eqref{equation:gap} can be satisfied, if $\mu$
is such that $T$ has spectral gap, and we replace it by $\check\mu^{*(l_0)}*\mu^{*(l_0)}$
for a suitably large integer $l_0$.
Finally, \eqref{equation:moment3} can be satisfied by restricting $\mu$ to any large ball in $\Isom(\R^d)$.
Details of these ideas will be given in Section \ref{section:general}.

\begin{thm}\label{theorem:technical}
Let $\mu\in\cM(\Isom(\R^d))$ and
suppose that the assumptions \eqref{equation:symmetric}--\eqref{equation:moment3} hold.

Then there is a number $c$, which depends only on $d$
such that
\[
\|S_r\|\le 1-c\min\{r^2,M^{-2}\}.
\]
\end{thm}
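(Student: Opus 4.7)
The plan is to split the analysis into two regimes based on the size of $r$ relative to $M^{-1}$. For $r \le c_0 M^{-1}$ a perturbative argument around $S_0$ suffices; for $r \ge c_0 M^{-1}$ we adapt the Bourgain--Gamburd method, using the non-concentration estimate \eqref{equation:introNC} as the key new input.

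\emph{Perturbative regime.} Identifying $S^{d-1} \cong \SO(d)/\SO(d-1)$ via a choice of base point, functions on $S^{d-1}$ pull back to right-$\SO(d-1)$-invariant functions on $\SO(d)$, and under this identification $S_0$ becomes the restriction of $T$. The spectral gap assumption \eqref{equation:gap} therefore gives $\|S_0\|_{L^2_0(S^{d-1})} \le 1/2$. A direct Cauchy--Schwarz estimate using \eqref{equation:moment2} shows $\|S_r - S_0\| \le 2\pi r$, so on the orthogonal complement of constants $\|S_r\|$ remains bounded away from $1$ for $r$ small. On the one-dimensional subspace of constants we have $S_r 1(\xi) = \widehat{\nu}(-r\xi)$ with $\nu = v_*\mu$; the Taylor expansion $|\widehat{\nu}(r\xi)|^2 = 1 - 4\pi^2 r^2 \langle \xi,\Sigma\xi\rangle + O(r^3 M)$, where $\Sigma$ is the covariance of $\nu$, combined with \eqref{equation:centered}--\eqref{equation:moment2} (forcing $\Tr \Sigma = 1$), yields $\|S_r 1\|_2 \le 1 - c r^2$ uniformly. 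A $2\times 2$ block argument combining these two bounds then gives $\|S_r\| \le 1 - cr^2$ for $r \le c_0 M^{-1}$.

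\emph{Preliminary two-scale estimate and non-concentration.} For the large-$r$ regime, I would first establish the weaker preliminary statement (Section~\ref{section:preliminary}) that for $R_1 > R_2 > 0$ with $|R_1 - R_2|$ sufficiently small, at least one of $\|S_{R_1}\|, \|S_{R_2}\|$ is bounded by $1 - c|R_1 - R_2|^2$. The idea is that if both norms were close to $1$, self-adjointness of $S_r$ coming from \eqref{equation:symmetric} would produce near-top vectors almost invariant under both $\rho_{R_1}(g)$ and $\rho_{R_2}(g)$ for $\mu$-typical $g$; but the characters $\omega_{R_1}(g)$ and $\omega_{R_2}(g)$ generically disagree by a quantity of order $|R_1-R_2|\cdot|v(g)|$, producing the required quadratic obstruction. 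Iterating and applying \eqref{equation:Sr} to $\mu^{*(l)}$ shows that the spherical $L^2$-average of $\widehat{\mu^{*(l)}.\delta_{x_0}}$ is small on one of two nearby spheres; a spreading lemma---obtained by decomposing $\mu^{*(l)}.\delta_{x_0}$ into a mollified part (with Lipschitz Fourier transform) and a residual part (whose spherical averages decay in the radius)---transfers smallness across nearby radii, and then Plancherel on $\R^d$ converts uniform $L^2$-smallness of $\widehat{\mu^{*(l)}.\delta_{x_0}}$ in a ball into the non-concentration estimate \eqref{equation:introNC}.

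\emph{Bourgain--Gamburd finishing step and main obstacle.} With \eqref{equation:introNC} available, I would adapt the Bourgain--Gamburd flattening/product-theorem machinery to the representation $\rho_r$ of $\Isom(\R^d)$. Assuming for contradiction that $\|S_r\| > 1 - cM^{-2}$ with $r \ge c_0 M^{-1}$, iterating gives $\|S_r^l\|$ close to $1$ for $l$ of moderate size, which by Plancherel-type arguments together with \eqref{equation:gap} forces the convolution $\mu^{*(l)}$ to concentrate at scale $r^{-1}$ on a set whose projection to $\SO(d)$ resembles an approximate subgroup. The product theorem of Bourgain--Gamburd--de Saxc\'e in $\SO(d)$, applied in the ``large dimension'' framework of Section~\ref{section:compactgroups}, rules out such configurations unless the random walk itself concentrates in small balls---contradicting \eqref{equation:introNC}. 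I expect the hardest step to be the non-concentration estimate: the absence of Diophantine hypotheses prevents any reduction to counting, so one must rely on the subtle interplay between the two-scale preliminary estimate and the spreading lemma to transfer Fourier smallness across radii. Running the Bourgain--Gamburd flattening in the semidirect-product setting, where the Fourier analysis is naturally indexed by the continuous parameter $r$ rather than a single compact group and where notions of ``large dimension'' must be developed ad hoc, is the other significant technical burden.
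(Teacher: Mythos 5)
Your proposal follows essentially the same road map as the paper: a perturbative bound around $S_0$ for small $r$ (this is Lemma \ref{lemma:smallr}), the two-scale preliminary estimate (Proposition \ref{proposition:preliminary}), the Fourier decomposition into a Lipschitz piece and a rapidly-decaying piece to transfer smallness between nearby spheres (Lemma \ref{lemma:decomposition} and Proposition \ref{proposition:decay2}), and the Bourgain--Gamburd flattening-plus-product-theorem machinery in the ``large dimension'' framework. The one substantive ingredient you have not anticipated is the Littlewood--Paley decomposition of $L^2(S^{d-1})$ by degree of spherical harmonics used in Section \ref{section:LP}. Your sketch suggests that the relevant concentration scale in the flattening argument is $r^{-1}$, but this is only true when testing $\rho_r(\eta)$ against the lowest-degree block $\cL_0$; a test function that is itself highly oscillatory (a spherical harmonic of large degree) interacts with the measure at a much finer scale. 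Without decomposing $\f = \sum_i \f_i$ with $\f_i\in\cL_i$, choosing the mollification parameter $\d=2^{-2(n_0+i)}$ to depend on $i$, and using Proposition \ref{proposition:LittlewoodPaley} to show the blocks are almost invariant under $S_r^{l_0}$, the flattening argument as you sketch it would not close: there is no single mollification scale that works for all $\f\in L^2(S^{d-1})$. You do flag the continuous parameter $r$ and the ad hoc ``large dimension'' notions as technical burdens, but the block decomposition by spherical harmonic degree is an additional layer that is genuinely needed; once it is inserted, Sections \ref{section:largedim}--\ref{section:completing} of the paper carry out what you describe. A minor organizational difference is that you propose to invoke the small-$r$ perturbative bound as a separate terminal case, whereas the paper folds both regimes into the same Bourgain--Gamburd endgame via the parameter choices of Lemma \ref{lemma:defs}; either works, since Lemma \ref{lemma:smallr} already yields the theorem for $r\le cM^{-1}$.
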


We comment on the role of the third moment $M$.
Consider the following case.
Suppose that
\[
\mu(g:v(g)=0)=1-M^2\quad{\rm and}\quad\mu(g:|v(g)|=M)=M^{-2}.
\]
Then both conditions \eqref{equation:moment2} and \eqref{equation:moment3} hold.
If $\f\equiv1$ is a constant function, then it is easily seen that $\langle S_r\f,\f\rangle\ge 1-2M^2$ for all $r$.
This example shows that it is not possible to bound $\|S_r\|$ using only the conditions
 \eqref{equation:symmetric}--\eqref{equation:moment2}, but one needs to control the
probability that a random isometry with law $\mu$ fixes or approximately fixes a given point.
The role of $M$ is to control this degeneracy in a quantitative way.
The example also show that the dependence on $M$ is optimal up to the constant.

Throughout the paper the letters $c,C$ and various subscripted versions
refer to constants and parameters.
They are allowed to depend on $d$ but not on other parameters, unless the contrary is stated.
The same symbol occurring in different places need not have the same
value unless the contrary is explicitly stated.
For convenience, we use lower case for constants which are best
thought of to
be small and upper case for those which are best
thought of to be large.

\section{Preliminary spectral gap estimates}
\label{section:preliminary}

In this section, we prove that for any number $r>0$, there is at most one
eigenvalue of $S_r$ close to 1, and we estimate the norm of $S_r$ on the orthogonal complement of the
corresponding eigenfunction.
Moreover, if such an eigenvalue exists for some $r=r_1$ then it can not exist for $r=r_2$ if
$r_2$ is not too close to and not too far from $r_1$.
We make this precise below in a proposition.
This result is related to the paper \cite{CG-spectralgap}.

\begin{prp}\label{proposition:preliminary}
Suppose that the assumptions \eqref{equation:symmetric}--\eqref{equation:moment3} hold.
Let $r\ge 0$, and let $\f_1,\f_2\in L^2(S^{d-1})$  be orthonormal functions.
Then
\[
\|S_r\f_i\|_2\le1-c,
\]
for $i=1$ or $i=2$, where $c>0$ is an absolute constant.

In addition, there is a number $c>0$ depending only on the dimension $d$ such that the following holds:
Let $r_1,r_2\ge 0$ such that $|r_1-r_2|\le c M^{-1}$.
Then
\[
\|S_{r_i}\|\le1-c|r_1-r_2|^2
\]
holds for $i=1$ or $i=2$.
\end{prp}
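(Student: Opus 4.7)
Since $\mu=\check\mu_0\ast\mu_0$, the operator $S_r=\rho_r(\mu_0)^\ast\rho_r(\mu_0)$ is a positive self-adjoint contraction on $L^2(S^{d-1})$. The basic reformulation I will use is: if $\|\phi\|_2=1$ and $\|S_r\phi\|_2\ge 1-c$, then $\langle S_r\phi,\phi\rangle\ge 1-2c$, equivalently
\[
\int_{\Isom(\R^d)}\|\rho_r(g)\phi-\phi\|_2^2\,d\mu(g)\le 4c,
\]
so $\phi$ is approximately $\rho_r(\mu)$-invariant. Since the character factor in $\rho_r(g)$ has modulus one, pointwise $\bigl||\phi|\circ\theta(g)^{-1}-|\phi|\bigr|\le|\rho_r(g)\phi-\phi|$; combined with the spectral gap of $T$, which transfers via the right-$\SO(d-1)$-invariant embedding $L^2(S^{d-1})\hookrightarrow L^2(\SO(d))$ into the Poincar\'e-type inequality $\|u-\bar u\|_2\le 2\|S_0 u-u\|_2$ on $L^2_0(S^{d-1})$, this forces $|\phi|\approx 1$ in $L^2$ with error $O(\sqrt c)$.

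For the first assertion, given orthonormal $\phi_1,\phi_2$ with both $\|S_r\phi_i\|_2\ge 1-c$, consider the product $h=\phi_1\overline{\phi_2}$. Its mean $\int h=\langle\phi_1,\phi_2\rangle=0$ vanishes by orthogonality, whereas $\|h\|_2^2=\int|\phi_1|^2|\phi_2|^2$ is close to $1$ thanks to $|\phi_i|\approx 1$. The identity $h\circ\theta(g)^{-1}=\rho_r(g)\phi_1\cdot\overline{\rho_r(g)\phi_2}$ holds because the two character factors $e(\pm r\langle\xi,v(g)\rangle)$ cancel, and this together with the near-invariance $\rho_r(g)\phi_i\approx\phi_i$ produces $\int\|h\circ\theta(g)^{-1}-h\|_2^2\,d\mu(g)=O(c)$. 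The Poincar\'e inequality applied to $h\in L^2_0$ then gives $\|h\|_2\le 2\|S_0 h-h\|_2=O(\sqrt c)$, contradicting $\|h\|_2\approx 1$ for sufficiently small $c$.

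For the second assertion, suppose $\|S_{r_i}\phi_i\|_2\ge 1-\varepsilon$ for $i=1,2$ with $\varepsilon=c|r_1-r_2|^2$. The elementary norm bound $\|S_r-S_{r'}\|_{\mathrm{op}}\le 2\pi|r-r'|$ (from $|e(r\langle\xi,v\rangle)-e(r'\langle\xi,v\rangle)|\le 2\pi|r-r'||v(g)|$ with $\int|v|^2\,d\mu=1$) implies each $\phi_i$ is still an approximate top eigenvector of $S_{r_m}$ where $r_m=(r_1+r_2)/2$, so by the first assertion applied at $r_m$ we may assume after a phase adjustment that $\phi_1\approx\phi_2=\phi$. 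The triangle inequality then gives $\int\|\rho_{r_1}(g)\phi-\rho_{r_2}(g)\phi\|_2^2\,d\mu(g)\le 16\varepsilon$, and a direct calculation shows the integrand equals $4\int_{S^{d-1}}\sin^2\bigl(\pi(r_2-r_1)\langle\xi,v(g)\rangle\bigr)|\phi(\theta(g)^{-1}\xi)|^2\,d\xi$. Using $\sin^2(y)\ge y^2/2$ for $|y|\le 1$ on the subset where $|(r_2-r_1)\langle\xi,v(g)\rangle|\le 1/\pi$, together with $|\phi|\approx 1$, $\int_{S^{d-1}}\langle\xi,v\rangle^2\,d\xi=|v|^2/d$, and $\int|v|^2\,d\mu=1$, yields a lower bound of $c_d|r_1-r_2|^2$ for some $c_d>0$ depending only on $d$, provided the complementary tail is negligible. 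The tail is controlled via Markov's inequality using $\int|v|^3\,d\mu=M$, contributing $O(|r_1-r_2|M)$, which is small precisely under the hypothesis $|r_1-r_2|\le cM^{-1}$; this is exactly where the third moment enters. Comparing upper and lower bounds yields $\varepsilon\ge c_d|r_1-r_2|^2/16$, contradicting the choice of $c<c_d/16$. The main technical obstacle will be carrying out the $|\phi|\approx 1$ reduction and the $L^2$-product estimate in the first assertion with quantitative control, since $L^2\cdot L^2\not\subset L^2$ in general; this requires either reducing to bounded approximations of $\phi$ or invoking regularity of approximate eigenvectors of $S_r$.
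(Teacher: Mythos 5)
Your argument for the first assertion is essentially the paper's, and the difficulty you flag at the end ($\phi_1\overline{\phi_2}\notin L^2$) is real and is exactly what the paper resolves by passing to the unimodular modifications $\psi_i=\phi_i/|\phi_i|$ (Lemma~\ref{lemma:absolute}). This single move fixes both of your problems at once: $\psi_1\overline{\psi_2}$ has modulus $1$ so is an $L^2$ function of norm exactly $1$, and although $\int\psi_1\overline{\psi_2}$ is no longer exactly $0$, it is $O(\e)$ (via $\|\phi_i-\psi_i\|_2\le C\e$), which suffices to run the spectral gap bound $\|S_0(\psi_1\overline{\psi_2})\|_2^2\le A^2+\tfrac14(1-A^2)$. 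So your plan to use $\int h=0$ exactly and $\|h\|_2\approx 1$ is not quite the right formulation; the paper trades exactness in one for exactness in the other.

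The second assertion has a genuine quantitative gap. Your plan is to reduce to a single $\phi$ via the Lipschitz bound $\|S_r-S_{r'}\|\le 2\pi|r-r'|$ plus the first assertion applied at $r_m$. But this reduction degrades the invariance parameter: passing from $r_i$ to $r_m$ turns $\|S_{r_i}\phi_i\|\ge 1-\varepsilon$ with $\varepsilon=c|r_1-r_2|^2$ into $\|S_{r_m}\phi_i\|\ge 1-O(|r_1-r_2|)$, and the Gram--Schmidt step behind ``we may assume $\phi_1\approx\phi_2$'' then only gives $\|\phi_1-\phi_2\|_2=O(|r_1-r_2|^{1/2})$, not $O(\varepsilon^{1/2})$. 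Tracing through, you obtain $\int\|\rho_{r_1}(g)\phi-\rho_{r_2}(g)\phi\|_2^2\,d\mu=O(|r_1-r_2|)$, not the claimed $16\varepsilon=O(|r_1-r_2|^2)$. Comparing with your lower bound $c_d|r_1-r_2|^2$ gives $c_d|r_1-r_2|^2\lesssim|r_1-r_2|$, which is vacuous for small $|r_1-r_2|$; the argument does not close. The paper avoids this loss entirely by not looking for a common approximate eigenvector at all: Lemma~\ref{lemma:approximating} shows directly that the product $\psi_1\overline{\psi_2}$ of the (unimodular modifications of the) \emph{two different} eigenfunctions is $C\e$-invariant for $S_{r_1-r_2}$, because the characters $\omega_{r_1}(g)$ and $\overline{\omega_{r_2}(g)}$ combine to $\omega_{r_1-r_2}(g)$; then the small-$r$ norm bound $\|S_{r_1-r_2}\|\le 1-c(r_1-r_2)^2$ of Lemma~\ref{lemma:smallr} (itself proved by the Taylor expansion and third-moment control that you sketch, now applied to $S_{r_1-r_2}\mathbf 1$) forces $\e\ge c(r_1-r_2)$ with no loss of powers. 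The moral is that the character identity lets you transfer near-invariance from $(r_1,r_2)$ to $r_1-r_2$ without ever needing $\phi_1$ and $\phi_2$ to be close.
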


We say that a function $\f\in L^2(S^{d-1})$ of unit norm is \emph{$\e$-invariant}
for~$S_r$ if $\|S_r\f\|_2\ge 1-\e^2$.
The following lemma explains the terminology.
\begin{lem}\label{lemma:einvariance}
Let $\f\in L^2(S^{d-1})$ be a function of unit norm that is $\e$-invariant for $S_r$.
Then
\[
\langle \f-S_r\f,\f\rangle\le2\e^2\quad{\rm and}\quad\|\f-S_r\f\|_2\le2\e.
\]
\end{lem}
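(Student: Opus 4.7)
The plan is to exploit assumption \eqref{equation:symmetric}, which forces $S_r$ to be a positive self-adjoint contraction. Indeed, since $\rho_r$ is a unitary representation of $\Isom(\R^d)$, we have $\rho_r(g^{-1})=\rho_r(g)^*$, hence $\rho_r(\check\mu_0)=\rho_r(\mu_0)^*$; therefore
\[
S_r=\rho_r(\mu)=\rho_r(\check\mu_0)\rho_r(\mu_0)=\rho_r(\mu_0)^*\rho_r(\mu_0),
\]
so $S_r=A^*A$ with $A:=\rho_r(\mu_0)$ satisfying $\|A\|\le1$ by unitarity of $\rho_r$ and positivity of $\mu_0$. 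In particular $S_r$ is self-adjoint, positive semidefinite, and $\|S_r\|\le1$.

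From this I will derive the key intermediate inequality $\langle S_r\f,\f\rangle\ge\|S_r\f\|_2^2$. This holds because, writing $\langle S_r\f,\f\rangle=\|A\f\|_2^2$ and $\|S_r\f\|_2=\|A^*A\f\|_2\le\|A^*\|\cdot\|A\f\|_2\le\|A\f\|_2$, one gets $\|S_r\f\|_2^2\le\|A\f\|_2^2=\langle S_r\f,\f\rangle$. Combined with the hypothesis $\|S_r\f\|_2\ge1-\e^2$, this yields
\[
\langle S_r\f,\f\rangle\ge(1-\e^2)^2\ge1-2\e^2.
\]

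The first assertion is then immediate: $\langle\f-S_r\f,\f\rangle=1-\langle S_r\f,\f\rangle\le2\e^2$. For the second, I will expand
\[
\|\f-S_r\f\|_2^2=1-2\langle S_r\f,\f\rangle+\|S_r\f\|_2^2,
\]
using self-adjointness to make the cross term real. Substituting $\langle S_r\f,\f\rangle\ge\|S_r\f\|_2^2$ collapses this to $1-\|S_r\f\|_2^2\le1-(1-\e^2)^2\le2\e^2$, so $\|\f-S_r\f\|_2\le\sqrt2\,\e\le2\e$.

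There is no real obstacle here; the only subtlety is noticing that \eqref{equation:symmetric} is precisely what converts the general bound $\|S_r\f\|_2\ge1-\e^2$ into a genuine spectral statement about a positive operator, which is what makes it reasonable to call $\f$ ``approximately invariant.'' Without symmetry one would only get $|\langle S_r\f,\f\rangle|\le1$ with no useful lower bound.
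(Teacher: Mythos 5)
Your argument is correct and follows essentially the same route as the paper: the paper factors $S_r=X^2$ through its positive square root $X$ (a non-negative self-adjoint operator of norm at most $1$), while you factor $S_r=A^*A$ with $A=\rho_r(\mu_0)$, and in both cases the key inequality is $\|S_r\f\|_2^2\le\langle S_r\f,\f\rangle$ obtained from the intermediate operator having norm at most $1$. The final expansion of $\|\f-S_r\f\|_2^2$ is algebraically rearranged a bit differently from the paper's, but the substance is identical.
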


Before giving the proof of the lemma, we explain the organization of the rest of the section.
The proof of Proposition \ref{proposition:preliminary} is based on the observation that if $\psi_1$ and $\psi_2$
are $\e$-invariant for $S_{r_1}$ and $S_{r_2}$, respectively,
then $\psi_1\overline{\psi_2}$ is $C\e$-invariant for $S_{r_1-r_2}$.
This will be proved in Section \ref{section:approximating}.
Since the product of two $L^2$ functions are not in $L^2$ in general, it will be convenient
to approximate the $\e$-invariant functions with bounded functions.
The relevant estimates are given in Section \ref{section:constantModulus}.
In Section \ref{section:Fr}, we bound the norm of $S_r$ for small values of $r$.
Finally, we put together the proof of the proposition from the above three components in Section
\ref{section:proofPreliminary}.

Since all irreducible representations of $\SO(d)$ are contained in its regular representation,
assumption \eqref{equation:gap} implies the following spectral gap estimate on $S_0$:
\be\label{equation:gapS0}
\|S_0\f\|_2\le \frac{\|\f\|_2}{2},\quad\text{for all $\f\in L^2_0(S^{d-1})$}.
\ee
We only need, in fact, this weaker assumption in this section.

\begin{proof}[Proof of Lemma \ref{lemma:einvariance}]
By assumption \eqref{equation:symmetric} it follows that $S_r$ is a non-negative
selfadjoint operator, hence it has a square root $X$, that is $S_r=X^2$.
Then
\[
\|X\f\|_2\ge\|X^2\f\|_2\ge1-\e^2
\]
and
\[
\langle S_r\f,\f\rangle=\langle X\f,X\f\rangle\ge (1-\e^2)^2\ge1-2\e^2,
\]
which proves the first claim.

For the second claim, we write
\[
\langle \f-S_r\f,\f-S_r\f \rangle
=\langle \f-S_r\f,\f \rangle-\langle \f,S_r\f \rangle+\langle S_r\f,S_r\f \rangle.
\]
Since $S_r$ is non-negative of norm at most 1, 
\[
\langle S_r\f,S_r\f \rangle=\langle \f,S_r^2\f \rangle\le\langle \f,S_r\f \rangle.
\]
Thus
\[
\|\f-S_r\f\|_2^2=\langle \f-S_r\f,\f-S_r\f \rangle
\le\langle \f-S_r\f,\f \rangle\le2\e^2.
\]
\end{proof}

\subsection{Reducing to the case of constant modulus}
\label{section:constantModulus}
We prove in this section that any $\e$-invariant function $\f$ for $S_r$ 
can be approximated by one which has constant modulus.
The key idea is an application of the spectral gap estimate \eqref{equation:gapS0}
to the function $|\f|$.

\begin{lem}\label{lemma:absolute}
Let $\e,r\ge0$ and let $\f\in L^2(S^{d-1})$ be a function of unit norm, which is $\e$-invariant for $S_r$.
Then
\[
\psi:=\frac{\f}{|\f|}
\]
is $C\e$-invariant for $S_r$ and $\|\f-\psi\|_2\le C\e$, 
where $C$ is an absolute constant.
\end{lem}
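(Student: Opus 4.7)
The plan is to apply the spectral gap \eqref{equation:gapS0} to the non-negative function $u := |\f|$ to show it is close to $1$ in $L^2$, and then to transfer invariance from the product $\f = u\psi$ to the constant-modulus factor $\psi$. First, the pointwise inequality $|S_r\f(\xi)| \le (S_0 u)(\xi)$---immediate from \eqref{eq_defSr} and the triangle inequality---yields $\|S_0 u\|_2 \ge \|S_r\f\|_2 \ge 1 - \e^2$. Decomposing $u = c + h$ with $c = \int_{S^{d-1}} u \, d\xi$ and $h \in L^2_0(S^{d-1})$, the spectral gap $\|S_0 h\|_2 \le \|h\|_2/2$ gives $c^2 + \|h\|_2^2/4 \ge \|S_0 u\|_2^2 \ge 1 - 2\e^2$, which combined with $c^2 + \|h\|_2^2 = \|u\|_2^2 = 1$ forces $\|h\|_2^2 \le 8\e^2/3$ and $c^2 \ge 1 - 8\e^2/3$. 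Since $c \le \|u\|_2 = 1$ by Cauchy--Schwarz, we obtain $|c-1| = O(\e^2)$ and $\|u-1\|_2 = O(\e)$.

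Setting $\psi := \f/|\f|$ (and $\psi := 0$ on the null set $\{\f = 0\}$), the identity $\f - \psi = (u-1)\psi$ together with $|\psi| \le 1$ pointwise yields $\|\f - \psi\|_2 \le \|u-1\|_2 = O(\e)$. The set $\{u = 0\}$ has measure at most $\|u-1\|_2^2 = O(\e^2)$, so $\|\psi\|_2 = 1 - O(\e^2)$; normalising $\psi$ to unit norm changes the $L^2$ distance to $\f$ by only $O(\e^2)$, preserving the stated bound.

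The key step is proving $\|S_r\psi\|_2^2 \ge 1 - O(\e^2)$. Since $S_r$ is self-adjoint by \eqref{equation:symmetric}, expanding $S_r\psi = S_r\f + S_r(\psi-\f)$ and dropping the positive square term gives
\[
\|S_r\psi\|_2^2 \ge \|S_r\f\|_2^2 + 2\Re\langle S_r^2\f, \psi - \f\rangle,
\]
and I would then split the cross term as $\langle S_r^2\f, \psi - \f\rangle = \langle \f, \psi-\f\rangle + \langle S_r^2\f - \f, \psi-\f\rangle$. The first piece evaluates, using $\f = u\psi$ and $|\psi|^2 = 1$ on $\{u > 0\}$, to $\int u(1-u)\, d\xi = c - 1$, which is $O(\e^2)$ by paragraph one. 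The second piece is bounded by $\|S_r^2\f - \f\|_2 \cdot \|\psi-\f\|_2 \le 4\e \cdot O(\e) = O(\e^2)$, where $\|\f - S_r^2\f\|_2 \le 2\|\f - S_r\f\|_2 \le 4\e$ uses Lemma~\ref{lemma:einvariance} and $\|S_r\| \le 1$. Together with $\|S_r\f\|_2^2 \ge 1 - 2\e^2$, this gives $\|S_r\psi\|_2^2 \ge 1 - O(\e^2)$, i.e.\ $\psi$ is $C\e$-invariant for some absolute constant $C$.

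The main obstacle lies in this last step. A direct Cauchy--Schwarz bound $|\langle \f, \psi-\f\rangle| \le \|\f\|_2 \|\psi-\f\|_2 = O(\e)$ is too crude and would only deliver $\|S_r\psi\|_2 \ge 1 - O(\e)$, i.e.\ the weaker conclusion that $\psi$ is $C\sqrt{\e}$-invariant. The sharpening to order $\e^2$ relies on recognising that $\psi - \f = (1-u)\psi$ is almost orthogonal to $\f = u\psi$, with defect exactly $\int u(1-u)\, d\xi = c - 1$, which is itself quadratically small in $\e$ thanks to the spectral gap applied to $u$ in paragraph one.
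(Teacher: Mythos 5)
Your proposal is correct and follows essentially the same route as the paper: apply the spectral gap \eqref{equation:gapS0} to $|\f|$ to get $A:=\int|\f|=1-O(\e^2)$ and $\|\f-\psi\|_2=O(\e)$, then transfer $\e$-invariance to $\psi$ using Lemma~\ref{lemma:einvariance}. The only (cosmetic) difference is in the final step, where the paper lower-bounds $\|S_r\psi\|_2\ge\Re\langle\psi,S_r\f\rangle$ directly while you expand $\|S_r\psi\|_2^2$ and discard the square of the perturbation $S_r(\psi-\f)$; both variants hinge on the same observation that the term $\langle\f,\psi-\f\rangle=A-1$ is quadratically small in $\e$ rather than merely $O(\e)$.
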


\begin{proof}
Since $|\omega_r|\equiv1$, we have
\begin{align}
\|S_r\f\|_2
&=\left\|\int\omega_r(g)\cdot\rho_0(g)\f\igap d\mu(g)\right\|_2\nonumber\\
&\le\left\|\int\rho_0(g)|\f|\igap d\mu(g)\right\|_2
=\|S_0|\f|\|_2.\label{equation:absolute}
\end{align}

We can write $|\f|=A+(|\f|-A)$, where
\[
A=\int_{S^{d-1}}|\f(\xi)|\igap d\xi.
\]
Since
\[
\int_{S^{d-1}}|\f(\xi)|-A\igap d\xi=0,
\]
we can use assumption \eqref{equation:gapS0} to get
\[
\|S_0|\f|\|_2^2\le A^2+\frac{1}{4}\||\f|-A\|_2^2=\frac{3A^2+1}{4}.
\]

We compare this with \eqref{equation:absolute} and use that $\f$ is $\e$-invariant.
We get
\[
1-2\e^2\le(1-\e^2)^2\le\frac{3A^2+1}{4}
\]
and hence
\[
A\ge A^2\ge 1-\frac{8}{3}\e^2.
\]
In addition, we have
\[
\||\f|-A\|_2=(1-A^2)^{1/2}\le\frac{\sqrt 8}{\sqrt 3}\e.
\]

Then we write
\[
\|\f-\psi\|_2
=\left\|\f-\frac{\f}{|\f|}\right\|_2
=\||\f|-1\|_2
\le\||\f|-A\|_2+(1-A)\le C\e.
\]
For the last inequality, we used $\e^2\le\e$.

For $\e$-invariance, we will prove
\[
\|S_r\psi\|_2\ge\langle S_r\psi,\f\rangle=\langle \psi,S_r\f\rangle\ge1-C\e^2.
\]
Only the last inequality is non-trivial.
For that, we write
\[
\langle \psi,S_r\f\rangle
=\langle \psi,\f\rangle-\langle \psi-\f,\f-S_r\f\rangle-\langle \f,\f-S_r\f\rangle.
\]
We have
\[
\langle \psi,\f\rangle=\int \frac{\f(\xi)}{|\f(\xi)|}\cdot\overline{\f(\xi)}\igap d\xi=A\ge1-C\e^2.
\]
By the Cauchy-Schwartz inequality and using Lemma \ref{lemma:einvariance}:
\[
|\langle \psi-\f,\f-S_r\f\rangle|\le C\e^2.
\]
Then we use  Lemma \ref{lemma:einvariance} again:
\[
\langle \f,\f-S_r\f\rangle\le C\e^2.
\]
Combining these inequalities, we get
\[
\langle \psi,S_r\f\rangle
=1-C\e^2,
\]
as required.
\end{proof}

\subsection{An \texorpdfstring{$\e$}{epsilon}-invariant function for \texorpdfstring{$S_{r_1-r_2}$}{S(r1-r2)}}
\label{section:approximating}

We fix $r_1\ge r_2\ge 0$ and functions $\psi_1,\psi_2\in L^2(S^{d-1})$
which are $\e$-invariant for $S_{r_1}$ and $S_{r_2}$ respectively.
This is possible only, if $\rho_{r_i}(g)\psi_i$ is very close to $\psi_i$ for most $g$ in the
support of $\mu$.
(See Lemma \ref{lemma:approximating2} below.)
Then $\rho_{r_1-r_2}(g)\psi_1\overline{\psi_2}$ is very close to  $\psi_1\overline{\psi_2}$.
Hence $\psi_1\overline{\psi_2}$ is $\e'$-invariant for $S_{r_1-r_2}$.
We can get $\e'=C\e$ as the following lemma shows.

\begin{lem}\label{lemma:approximating}
Let $r_1\ge r_2\ge 0$ and $\psi_1,\psi_2\in L^2(S^{d-1})$ be two functions
such that $|\psi_1|\equiv|\psi_2|\equiv1$.
Suppose that $\psi_1$ and $\psi_2$ are $\e$-invariant for $S_{r_1}$ and $S_{r_2}$ respectively,
where $\e>0$ is a number.
Then the function $\psi_1\overline{\psi_2}$ is $C\e$-invariant for $S_{r_1-r_2}$.
\end{lem}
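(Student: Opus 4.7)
The proof would rest on a direct pointwise factorization: from the definition \eqref{eq_defrho},
\[
[\rho_{r_1-r_2}(g)(\psi_1\overline{\psi_2})](\xi)
= [\rho_{r_1}(g)\psi_1](\xi)\cdot\overline{[\rho_{r_2}(g)\psi_2](\xi)},
\]
since the exponential factor $e((r_1-r_2)\langle\xi,v(g)\rangle)$ splits as $e(r_1\langle\xi,v(g)\rangle)\cdot\overline{e(r_2\langle\xi,v(g)\rangle)}$ while the rotations inside the two factors agree. Combined with $\|\psi_1\overline{\psi_2}\|_2=1$ (from $|\psi_i|\equiv 1$) and Cauchy--Schwartz, this reduces the target $C\e$-invariance to a matrix coefficient bound of the form
\[
\Re\langle S_{r_1-r_2}(\psi_1\overline{\psi_2}),\psi_1\overline{\psi_2}\rangle \ge 1 - C'\e^2
\]
for some absolute $C'$, after which one takes $C=\sqrt{C'}$.

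To establish this inner-product bound, I would linearize around the nearly invariant vectors. Setting $\a_g:=\rho_{r_1}(g)\psi_1-\psi_1$ and $\b_g:=\rho_{r_2}(g)\psi_2-\psi_2$, expansion of $(\psi_1+\a_g)\overline{(\psi_2+\b_g)}$ paired against $\psi_1\overline{\psi_2}$, using $|\psi_i|^2\equiv 1$ pointwise, gives
\[
\langle\rho_{r_1-r_2}(g)(\psi_1\overline{\psi_2}),\psi_1\overline{\psi_2}\rangle
= 1 + \langle\a_g,\psi_1\rangle + \overline{\langle\b_g,\psi_2\rangle} + \langle\a_g\overline{\psi_1},\b_g\overline{\psi_2}\rangle.
\]
Integrating over $\mu$, the first two error terms collapse to $\langle S_{r_1}\psi_1-\psi_1,\psi_1\rangle$ and $\overline{\langle S_{r_2}\psi_2-\psi_2,\psi_2\rangle}$, and each has real part at least $-2\e^2$ by Lemma~\ref{lemma:einvariance}.

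The genuine obstacle is the quadratic cross term $\int\langle\a_g\overline{\psi_1},\b_g\overline{\psi_2}\rangle\,d\mu(g)$, which mixes the two errors and so cannot be handled by a single invariance estimate. My plan is to apply Cauchy--Schwartz pointwise, absorbing $|\overline{\psi_1}\psi_2|\equiv 1$, to get $|\langle\a_g\overline{\psi_1},\b_g\overline{\psi_2}\rangle|\le\|\a_g\|_2\|\b_g\|_2$, and then Cauchy--Schwartz again in $g$ to bound the integrated term by $(\int\|\a_g\|_2^2\,d\mu(g))^{1/2}(\int\|\b_g\|_2^2\,d\mu(g))^{1/2}$. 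Each factor equals $2-2\Re\langle S_{r_i}\psi_i,\psi_i\rangle$, which Lemma~\ref{lemma:einvariance} bounds by $4\e^2$, so the cross term contributes at most $4\e^2$. Summing the three error contributions yields $\Re\langle S_{r_1-r_2}f,f\rangle\ge 1-8\e^2$ with $f=\psi_1\overline{\psi_2}$, so one may take $C=2\sqrt{2}$.
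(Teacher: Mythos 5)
Your proof is correct and follows essentially the same route as the paper's: both expand $\rho_{r_1-r_2}(g)(\psi_1\overline{\psi_2})=\rho_{r_1}(g)\psi_1\cdot\overline{\rho_{r_2}(g)\psi_2}$ into a main term plus two linear error terms plus a quadratic cross term, bound the linear terms via Lemma~\ref{lemma:einvariance}, and bound the quadratic term by two applications of Cauchy--Schwartz together with the identity $\int\|\rho_r(g)\psi-\psi\|_2^2\,d\mu(g)=2\langle\psi-S_r\psi,\psi\rangle$ (the paper's Lemma~\ref{lemma:approximating2}, which you rederive inline), arriving at the same $1-8\e^2$ lower bound for the matrix coefficient.
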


We record a useful identity in the next lemma.

\begin{lem}\label{lemma:approximating2}
Let $r\ge0$ and let $\psi\in L^2(S^{d-1})$ be a function of $L^2$-norm~1.
Then
\[
\int\|\rho_r(g)\psi-\psi\|_2^2 \igap d\mu(g)
= 2\langle\psi- S_r\psi,\psi\rangle.
\]
\end{lem}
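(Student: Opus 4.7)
The plan is to expand the squared norm pointwise in $g$ using that each $\rho_r(g)$ is a \emph{unitary} operator on $L^2(S^{d-1})$, then integrate against $\mu$ and use self-adjointness of $S_r$ to drop a real-part.

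First, for any fixed $g$ I would write
\[
\|\rho_r(g)\psi-\psi\|_2^2 = \|\rho_r(g)\psi\|_2^2 + \|\psi\|_2^2 - \langle\rho_r(g)\psi,\psi\rangle - \langle\psi,\rho_r(g)\psi\rangle.
\]
Since $\rho_r(g)$ is unitary and $\|\psi\|_2=1$, the first two terms contribute $2$, and the remaining terms combine into $-2\Re\langle\rho_r(g)\psi,\psi\rangle$.

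Next, I would integrate this identity with respect to $\mu(g)$ and move the integral inside the inner product using the definition \eqref{eq_defSr} of $S_r=\rho_r(\mu)$, giving
\[
\int\|\rho_r(g)\psi-\psi\|_2^2\igap d\mu(g)=2-2\Re\langle S_r\psi,\psi\rangle.
\]

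Finally, I would invoke assumption \eqref{equation:symmetric} that $\mu=\check\mu_0*\mu_0$, which makes $S_r$ a non-negative self-adjoint operator (as already observed at the beginning of the proof of Lemma~\ref{lemma:einvariance}); in particular $\langle S_r\psi,\psi\rangle\in\R$, so the real part may be dropped, yielding $2-2\langle S_r\psi,\psi\rangle=2\langle\psi-S_r\psi,\psi\rangle$. There is no real obstacle here; the only point one has to be slightly careful about is that the quantity $\langle S_r\psi,\psi\rangle$ is a priori complex, and it is the symmetry hypothesis \eqref{equation:symmetric} that legitimises discarding the real-part and gives the identity as stated.
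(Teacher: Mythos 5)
Your proof is correct and is essentially the same as the paper's: both expand $\|\rho_r(g)\psi-\psi\|_2^2=2-2\Re\langle\rho_r(g)\psi,\psi\rangle$ using unitarity and then invoke the symmetry \eqref{equation:symmetric}. The only cosmetic difference is that the paper applies symmetry of $\mu$ directly under the integral (via $\langle\psi,\rho_r(g)\psi\rangle=\langle\rho_r(g^{-1})\psi,\psi\rangle$ and $\check\mu=\mu$), whereas you integrate first and then discard the real part using self-adjointness of $S_r$ — which is the same fact packaged one step later.
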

\begin{proof}
We note the identity
\begin{align*}
\|\rho_r(g)\psi-\psi\|_2^2
&=2-\langle\rho_r(g)\psi,\psi\rangle-\langle\psi,\rho_r(g)\psi\rangle\\
&=2-\langle\rho_r(g)\psi,\psi\rangle-\langle\rho_r(g^{-1})\psi,\psi\rangle.
\end{align*}
By assumption \eqref{equation:symmetric}, $\mu$ is symmetric, hence
\[
2-2\langle S_r\psi,\psi \rangle
=\int(2-2\langle \rho_r(g)\psi,\psi\rangle) \igap d\mu(g)
=\int\|\rho_r(g)\psi-\psi\|_2^2 \igap d\mu(g).
\]
\end{proof}

\begin{proof}[Proof of Lemma \ref{lemma:approximating}]
We can write
\begin{align}
S_{r_1-r_2}(\psi_1\overline{\psi_2})
&=\int\rho_{r_1}(g)\psi_1\cdot\overline{\rho_{r_2}(g)\psi_2}\igap d\mu(g)\nonumber\\
&=\psi_1\overline{\psi_2}\label{equation:mainterm}\\
&\qquad{}+\int(\rho_{r_1}(g)\psi_1-\psi_1)\cdot\overline{\psi_2}\igap d\mu(g)
\label{equation:error1}\\
&\qquad{}+\int\psi_1\cdot\overline{(\rho_{r_2}(g)\psi_2-\psi_2)}\igap d\mu(g)
\label{equation:error2}\\
&\qquad{}+\int(\rho_{r_1}(g)\psi_1-\psi_1)
\cdot\overline{(\rho_{r_2}(g)\psi_2-\psi_2)}\igap d\mu(g)\label{equation:error3}.
\end{align}
To show the claim, we prove that
\[
|\langle S_{r_1-r_2}(\psi_1\overline{\psi_2}),\psi_1\overline{\psi_2}\rangle|\ge 1-4\e^2.
\]
Since $\langle\eqref{equation:mainterm},\psi_1\overline{\psi_2}\rangle=1$, it suffices to show that
\[
|\langle \eqref{equation:error1}+\eqref{equation:error2}+\eqref{equation:error3},
\psi_1\overline{\psi_2}\rangle|\le 8\e^2.
\]

We deal with the contributions of 
\eqref{equation:error1}--\eqref{equation:error3} separately.
To estimate the contribution of \eqref{equation:error1}, we write
\begin{align*}
|\langle\eqref{equation:error1},\psi_1\overline{\psi_2}\rangle|
&=\left|\int(\rho_{r_1}(g)\psi_1(\xi)-\psi_1(\xi))\cdot\overline{\psi_1(\xi)}\igap d\mu(g)d\xi\right|\\
&=|\langle S_{r_1}\psi_1-\psi_1,\psi_1\rangle|\le2 \e^2.
\end{align*}
The last inequality follows from $\e$-invariance and Lemma \ref{lemma:einvariance}.
An analogous inequality for the contribution of \eqref{equation:error2} follows from a similar argument.

To estimate \eqref{equation:error3}, we use the Cauchy-Schwartz inequality two times
and Lemma \ref{lemma:approximating2}:
\begin{align*}
\|\eqref{equation:error3}\|_1
&\le\int\|\rho_{r_1}(g)\psi_1-\psi_1\|_2
\cdot\|\rho_{r_2}(g)\psi_2-\psi_2\|_2\igap d\mu(g)\\
&\le\left(\int\|\rho_{r_1}(g)\psi_1-\psi_1\|_2^2\igap d\mu(g)
\cdot\int\|\rho_{r_2}(g)\psi_2-\psi_2\|_2^2\igap d\mu(g)\right)^{1/2}\\
&\le4\e^2.
\end{align*}
Using $\|\psi_1\overline{\psi_2}\|_\infty=1$, this yields
\[
|\langle\eqref{equation:error3},\psi_1\overline{\psi_2}\rangle|\le 4\e^2.
\]

Combining our estimates, we get the lemma.
\end{proof}

\subsection{Estimating \texorpdfstring{$\|S_r\|$}{the norm of Sr} near \texorpdfstring{$r=0$}{r=0}}
\label{section:Fr}

In this section, we estimate the norm of $S_r$ for small values of $r$.
This is done using the spectral gap property of $S_0$ and Taylor expansion.

\begin{lem}\label{lemma:smallr}
For every $r\le cM^{-1}$, we have $\|S_r\|\le1-cr^2$, where $c$ is a number depending only on the dimension $d$.
\end{lem}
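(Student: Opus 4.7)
My approach is to decompose $L^2(S^{d-1})=\C\cdot 1\oplus L^2_0(S^{d-1})$, write $S_r$ as a $2\times 2$ block operator with respect to this splitting, and bound its largest eigenvalue. Since $\mu=\check\mu_0*\mu_0$ by \eqref{equation:symmetric}, $S_r=\rho_r(\mu_0)^*\rho_r(\mu_0)$ is positive self-adjoint, so $\|S_r\|=\sup_{\|\varphi\|_2=1}\langle S_r\varphi,\varphi\rangle$.

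The central computation is a Taylor expansion of $S_r 1(\xi)=\int e(r\langle\xi,v(g)\rangle)\,d\mu(g)$ to second order in $r$. Using $|e(y)-(1-2\pi i y-2\pi^2 y^2)|\le C|y|^3$ and the third moment bound \eqref{equation:moment3}, the remainder is $O(r^3 M)$ in sup norm. The linear term vanishes after integration against $\mu$ by the centering hypothesis \eqref{equation:centered}, leaving
\[
S_r 1(\xi)=1-2\pi^2 r^2 A(\xi)+O(r^3 M),\qquad A(\xi):=\int\langle\xi,v(g)\rangle^2\,d\mu(g).
\]
Averaging over $S^{d-1}$ using $\int_{S^{d-1}}\langle\xi,v\rangle^2\,d\xi=|v|^2/d$ together with \eqref{equation:moment2} gives $a:=\langle S_r 1,1\rangle=1-\tfrac{2\pi^2}{d}r^2+O(r^3 M)$, while the $L^2_0$-component $b:=S_r 1-a$ satisfies $\|b\|_2=O(r^2)$. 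The essential point is that $\|b\|_2$ is of order $r^2$ rather than $r$: this is exactly what \eqref{equation:centered} buys us.

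The spectral gap \eqref{equation:gap} yields \eqref{equation:gapS0}, i.e.\ $\|S_0\|_{L^2_0}\le1/2$. Combined with the elementary bound $\|S_r-S_0\|\le 2\pi r$ (from $|e(y)-1|\le 2\pi|y|$, Cauchy--Schwarz and \eqref{equation:moment2}), the compression $D$ of $S_r$ to $L^2_0(S^{d-1})$ satisfies $\|D\|\le 1/2+2\pi r$.

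For an arbitrary unit vector $\varphi=c\cdot 1+\varphi_0$ with $|c|^2+\|\varphi_0\|_2^2=1$, the block decomposition gives
\[
\langle S_r\varphi,\varphi\rangle\le|c|^2 a+2|c|\,\|b\|_2\,\|\varphi_0\|_2+\|D\|\,\|\varphi_0\|_2^2.
\]
The main obstacle is the cross term $2|c|\,\|b\|_2\,\|\varphi_0\|_2=O(r^2)$, which is of the same order as the main negative contribution $-c_1 r^2|c|^2$ coming from $a<1$. I will dispose of it by a weighted AM--GM inequality $2xy\le\alpha x^2+y^2/\alpha$ with weight $\alpha=c_1 r^2/2$, which bounds the cross term by $\tfrac{c_1r^2}{2}|c|^2+O(r^2)\|\varphi_0\|_2^2$. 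The first piece is absorbed into the main $|c|^2$ contribution (leaving $-c_1 r^2|c|^2/2$), while the residue $O(r^2)\|\varphi_0\|_2^2$ is swallowed by the strictly negative spectral-gap contribution $-(1/2-2\pi r)\|\varphi_0\|_2^2$ once $r$ is small. Finally, choosing $r\le c_0 M^{-1}$ with $c_0$ small enough ensures that $O(r^3 M)$ is negligible compared to $c_1 r^2$, yielding $\|S_r\|\le 1-cr^2$ as required.
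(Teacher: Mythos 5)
Your proof is correct and takes essentially the same approach as the paper: decompose $\varphi$ into its constant and $L^2_0$ parts, use a Taylor expansion (exploiting \eqref{equation:centered} and \eqref{equation:moment3}) to control the constant-constant entry, use the spectral gap \eqref{equation:gapS0} together with $\|S_r-S_0\|\lesssim r$ for the $L^2_0$ block, and absorb the $O(r^2)$ cross term by weighted AM--GM. The only organizational difference is that you work with the quadratic form $\langle S_r\varphi,\varphi\rangle$ (so your cross term only requires $\|1-S_r 1\|_2\lesssim r^2$), whereas the paper expands $\|S_r\varphi\|_2^2$ directly and therefore needs the analogous bound for $1-S_r^2 1$; these are interchangeable for the positive self-adjoint $S_r$.
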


In the proof we will need some estimates for the function
\be\label{equation:definitionF}
F_r:=S_r1=\int\omega_r(g)\igap d\mu(g).
\ee

\begin{lem}\label{lemma:Fr}
There is an absolute constant $C$,
such that
\[
\|1-F_r\|_2\le Cr^2.
\]
In addition, there is a constant $c$ depending only on the dimension $d$ such that
for every $0\le r\le cM^{-1}$ we have
\[
\|F_r\|_2\le 1-cr^2.
\]
\end{lem}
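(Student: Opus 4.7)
The plan is to prove both inequalities by a direct Taylor expansion of the exponential $e^{-2\pi i r \langle \xi, v(g)\rangle}$ appearing in the definition \eqref{equation:definitionF} of $F_r$, exploiting the centering assumption \eqref{equation:centered} to kill the first-order term. Let me set $Q(\xi):=\int\langle\xi,v(g)\rangle^2\,d\mu(g)=\langle\xi,A\xi\rangle$, where $A=\int v(g)v(g)^{\mathrm{tr}}\,d\mu(g)$. Two easy observations about $Q$ will drive the argument: by Cauchy--Schwarz together with \eqref{equation:moment2} one has $0\le Q(\xi)\le \int|v(g)|^2\,d\mu(g)=1$; and since $\int_{S^{d-1}}\langle\xi,v\rangle^2\,d\xi=|v|^2/d$ for the normalized surface measure, Fubini and \eqref{equation:moment2} give $\int_{S^{d-1}}Q(\xi)\,d\xi=1/d$.

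For the first estimate I will apply the sharp bound $|e^{it}-1-it|\le t^2/2$ to $t=-2\pi r\langle\xi,v(g)\rangle$, integrate, and use \eqref{equation:centered} to eliminate the linear term. This yields
\[
|F_r(\xi)-1|\le 2\pi^2 r^2 Q(\xi)\le 2\pi^2 r^2
\]
pointwise on $S^{d-1}$, which in view of $\|1\|_2=1$ gives $\|1-F_r\|_2\le 2\pi^2 r^2$ with an absolute constant, as required.

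For the second estimate I will push the expansion one step further using $|e^{it}-1-it+t^2/2|\le |t|^3/6$, now invoking the third-moment assumption \eqref{equation:moment3}. Together with \eqref{equation:centered} this gives
\[
F_r(\xi)=1-2\pi^2 r^2 Q(\xi)+R(\xi),\qquad |R(\xi)|\le C r^3 M.
\]
For $r\le cM^{-1}$ with $c$ small, the main term $1-2\pi^2 r^2 Q(\xi)$ is nonnegative, so squaring and using $Q(\xi)\le 1$ gives
\[
|F_r(\xi)|^2\le 1-4\pi^2 r^2 Q(\xi)+C\bigl(r^4+r^3 M+r^6 M^2\bigr).
\]
Integrating over $S^{d-1}$ and inserting $\int Q(\xi)\,d\xi=1/d$ produces
\[
\|F_r\|_2^2\le 1-\tfrac{4\pi^2}{d}r^2+C\bigl(r^4+r^3 M+r^6 M^2\bigr).
\]

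Finally, I will absorb the error terms using $r\le cM^{-1}$: Jensen's inequality applied to $x\mapsto x^{3/2}$ and \eqref{equation:moment2} give $M\ge 1$, so $r\le c$ and $rM\le c$; consequently $r^3 M\le cr^2$, $r^4\le c^2 r^2$, $r^6 M^2\le c^2 r^4\le c^4 r^2$. Choosing $c$ small enough, all three errors are dominated by $\tfrac{2\pi^2}{d}r^2$, and $\sqrt{1-x}\le 1-x/2$ then yields $\|F_r\|_2\le 1-c' r^2$ for some $c'>0$ depending only on $d$. There is no serious obstacle: the proof is a careful second-order Taylor expansion, the only delicate points being the cancellation of the linear term via \eqref{equation:centered} and the uniform (in $\xi$) control of the cubic remainder by $M$ through \eqref{equation:moment3}, which is exactly why the smallness hypothesis must be calibrated to $M^{-1}$.
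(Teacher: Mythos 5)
Your proposal is correct and follows essentially the same route as the paper: a second-order Taylor expansion of $\omega_r$ in which the centering assumption \eqref{equation:centered} kills the linear term, the third moment $M$ controls the cubic remainder, and the exact value of $\int_{S^{d-1}}Q(\xi)\,d\xi=1/d$ supplies the $cr^2$ gain. The only difference is cosmetic: where you square the pointwise expansion and integrate directly, the paper sets $G_r=1-4\pi^2r^2\int\langle v(g),\xi\rangle^2\,d\mu(g)$ and passes through the interpolation $\|G_r\|_2\le(\|G_r\|_\infty\|G_r\|_1)^{1/2}$; your explicit observation that $M\ge1$ (via Jensen) is a small but welcome clarification that the paper leaves implicit.
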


\begin{proof}
By Taylor's theorem,
\[
|1-2\pi ir\langle v(g),\xi\rangle-\omega_r(g)(\xi)|\le4\pi^2r^2|v(g)|^2.
\]
We integrate this and use assumptions \eqref{equation:centered} and  \eqref{equation:moment2}:
\[
|1-F_r(\xi)|
=\left|\int 1-2\pi ir\langle v(g),\xi\rangle-\omega_r(g)(\xi)\igap d\mu(g)\right|
\le 4\pi^2r^2,
\]
this proves the first claim.

For the second claim, we use one more term in the Taylor expansion of $\omega_r$ and integrate
it as above.
We get
\be\label{equation:FG}
\left|1-4\pi^2r^2\int\langle v(g),\xi\rangle^2\igap d\mu(g)-F_r(\xi)\right|\le8\pi^3Mr^3.
\ee

We consider the function
\[
G_r(\xi)=1-4\pi^2r^2\int\langle v(g),\xi\rangle^2\igap d\mu(g).
\]
If $r\le1/(2\pi)$ (that we may assume), then
\[
4\pi^2r^2\int\langle v(g),\xi\rangle^2\igap d\mu(g)\le 1,
\]
hence
\begin{align*}
\|G_r\|_1
&=\int_{S^{d-1}} G_r(\xi)\igap d(\xi)
=1-4\pi^2r^2\int\int\langle v(g),\xi\rangle^2\igap d\xi d\mu(g)\\
&=1-4\pi^2r^2c_d\int |v(g)|^2d\mu(g)
=1-4\pi^2r^2c_d,
\end{align*}
where $c_d$ is a number depending only on $d$.

Then we can write
\[
\|G_r\|_2\le(\|G_r\|_\infty\|G_r\|_1)^{1/2}\le1-2\pi^2r^2c_d.
\]
Combining with \eqref{equation:FG}, we get
\[
\|F_r\|_2\le1-2\pi^2r^2c_d+8\pi^3Mr^3.
\]
If $r\le c_d/(8\pi M)$ (that we may assume), then the second claim follows.
\end{proof}

\begin{proof}[Proof of Lemma \ref{lemma:smallr}]
Let $\f\in L^2(S^{d-1})$ be an arbitrary function of unit norm.
Write $A=\int\f(\xi) \igap d\xi$ and $\f=A+\f_0$
(for notational convenience, we assume as we may that $A\geq0$, in particular real).
Then by \eqref{equation:gapS0}, we have
\[
\|S_0\f_0\|_2\le\|\f_0\|_2/2.
\]
By Taylor's theorem,
\[
\|S_r\f_0-S_0\f_0\|_2\le\int\|\omega_r(g)-1\|_\infty\|\rho_0(g)\f_0\|_2\igap d\mu(g)
\le Cr\|\f_0\|_2.
\]
We can assume without loss of generality, that $r$ is sufficiently small
so that $\|S_r\f_0\|_2\le\|\f_0\|_2/\sqrt2$.

We can write
\[
\|S_r\f\|_2^2=\|AF_r+S_r\f_0\|_2^2
=A^2\|F_r\|_2^2+\|S_r\f_0\|_2^2+2\Re\langle AF_r,S_r\f_0\rangle.
\]
We write
\[
\langle F_r,S_r\f_0\rangle
=\langle S_rF_r,\f_0\rangle
=\langle S_{r}^21,\f_0\rangle
=-\langle 1-S_{r}^21,\f_0\rangle.
\]
Similarly to  Lemma \ref{lemma:Fr}, we can get analogous estimates for the function $S_r^21$.
Thus
$|\langle F_r,S_r\f_0\rangle|\le Cr^2\|\f_0\|_2$.

We plug this into the previous identity and use the inequality between the
arithmetic and geometric means:
\begin{align*}
\|S_r\f\|_2^2&\le A^2(1-cr^2)+(1-A^2)/2+Cr^2A\sqrt{1-A^2}\\
&\le A^2(1-cr^2/2)+(1-A^2)\left(\frac{1}{2}+\frac{C^2r^2}{2c}\right).
\end{align*}
It is easy to see that the right hand side takes its maximum for $A=1$ if $r$
is sufficiently small.
This proves the lemma.
\end{proof}

\subsection{Proof of Proposition \ref{proposition:preliminary}}
\label{section:proofPreliminary}

Suppose that $\f_1$ and $\f_2$ are two orthonormal $\e$-invariant functions for $S_r$ for some $\e,r\ge0$.
Write $\psi_i=\f_i/|\f_i|$.
By Lemma \ref{lemma:absolute}, these satisfy the assumptions in Lemma \ref{lemma:approximating}
with $r_1=r_2=r$.
Thus
\be\label{equation:lower}
\|S_0(\psi_1\overline{\psi_2})\|_2\ge 1-C\e^2.
\ee

Write
\begin{align*}
A&:=\left|\int_{S^{d-1}}\psi_1(\xi)\overline{\psi_2}(\xi)\igap d\xi\right|\\
&\le \left|\int_{S^{d-1}}\f_1(\xi)\overline{\f_2}(\xi)\igap d\xi\right|+\|\f_1-\psi_1\|_2+\|\f_2-\psi_2\|_2\\
&\le C\e
\end{align*}
using Lemma \ref{lemma:absolute}.
Therefore, by assumption  \eqref{equation:gapS0}, we have
\[
\|S_0(\psi_1\overline{\psi_2})\|_2^2\le A^2+\frac{1}{4}(1-A^2)\le \frac{1}{4}+C\e^2
\]
This combined with \eqref{equation:lower} yields that $\e>c$ for some absolute constant $c>0$.
This proves the first part of the proposition.

For the second part, let $r_1\ge r_2\ge 0$ such that
$r_1-r_2\le cM^{-1}$ with the constant $c$
from Lemma \ref{lemma:smallr}.
Let $\f_1,\f_2\in L^{2}(S^{d-1})$ be of unit norm and $\e$-invariant for $S_{r_1}$ and $S_{r_2}$, respectively.
Then by Lemmata \ref{lemma:absolute} and \ref{lemma:approximating},
\[
\|S_{r_1-r_2}(\psi_1\overline{\psi_2})\|_2\ge1-C\e^2.
\]
On the other hand by Lemma \ref{lemma:smallr}, we have $\|S_{r_1-r_2}\|\le1-c(r_1-r_2)^2$,
hence we must have $\e\ge c(r_1-r_2)$, which proves the second part of the proposition.

\section{Non-concentration on subgroups}
\label{section:decay}

Fix an arbitrary point $x_0\in\R^d$.
Write $\nu_l=\mu^{*(l)}.\d_{x_0}$.
This is the probability law of the $l$th step of the random walk starting from the point $x_0$.
In this section, we estimate the probability that the $l$th step is in a
fixed small ball.
This implies an estimate on the $\mu^{*(l)}$-measure of a neighborhood of
a subgroup of $\Isom(\R^d)$ conjugate to $\SO(d)$.
Denote by $B(r,x)\subset\R^d$ the ball of radius $r$ around a point $x\in\R^d$.

\begin{prp}\label{proposition:decay}
Suppose that \eqref{equation:symmetric}--\eqref{equation:moment3} hold.
Then there is a constant $C$ depending only on the dimension $d$, such that
the following holds.
Let $L>CM^2$ be a number.
 Then for every $1/2\ge r\ge 0$ and $l\ge L\log(r^{-1})$ and $y_0\in\R^d$, we have
\[
\nu_l(B(L^{1/2}r,y_0))\le C r^{\frac{d-1}{2(d+1)}}.
\]
\end{prp}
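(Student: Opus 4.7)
The plan is to convert the ball probability into a Fourier-analytic estimate and combine Proposition \ref{proposition:preliminary} with a Lipschitz-transfer argument that compares Fourier averages on nearby spheres.

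First I would reduce to a Fourier integral. Pick a smooth $\chi\ge\mathbf{1}_{B(1,0)}$ with $\wh\chi$ supported in $B(C_0,0)$. Applying Parseval and Cauchy--Schwarz to $\int\chi((x-y_0)/s)\igap d\nu_l(x)$ with $s=L^{1/2}r$ gives
\[
\nu_l(B(s,y_0))\le Cs^{d/2}\left(\int_0^{C_0/s}t^{d-1}\|\Res_t\wh\nu_l\|_2^2\igap dt\right)^{1/2}.
\]
Iterating \eqref{equation:Sr} yields $\Res_t\wh\nu_l=S_t^l(\Res_t\wh{\d_{x_0}})$; since $\mu$ is symmetric by \eqref{equation:symmetric}, $S_t$ is self-adjoint, and $\|\Res_t\wh{\d_{x_0}}\|_2$ is bounded by a constant, so $\|\Res_t\wh\nu_l\|_2\le C\|S_t\|^l$.

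Next I would control $\|S_t\|^l$. For $t\le cM^{-1}$, Lemma \ref{lemma:smallr} gives $\|S_t\|\le 1-ct^2$ directly. For larger $t$, I apply Proposition \ref{proposition:preliminary} part 2: choose $\d^2=c_0(d-1)/((d+1)L)$ so that on the good set $G:=\{t:\|S_t\|\le 1-c\d^2\}$ we have $\|S_t\|^{2l}\le r^{(d-1)/(d+1)}$ (using $l\ge L\log(r^{-1})$). The bad set $B=[0,C_0/s]\sm G$ satisfies: any two points of $B$ within distance $cM^{-1}$ are in fact within $\d$ of each other, so $B$ is a union of clusters of diameter $\le\d$ separated by gaps of at least $cM^{-1}-\d$. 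For $L>CM^2$ with $C=C(d)$ sufficiently large, $\d\ll cM^{-1}$ and hence $|B|\lesssim(C_0/s)\cdot M\d$.

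The crux is to transfer the bound from $G$ to $B$. Decompose $\nu_l=\nu'+\nu''$ with $\nu'=\nu_l\restr{B(R,0)}$; then $\wh{\nu'}$ is $2\pi R$-Lipschitz while $\|\nu''\|_1\lesssim l/R^2$ by Chebyshev and the variance bound $\E|Y_l|^2\le Cl$ following from \eqref{equation:moment2}. For each $t\in B$ I choose $t'\in G$ just outside the cluster containing $t$, so $|t-t'|\le\d$, and obtain
\[
\|\Res_t\wh\nu_l\|_2\le\|\Res_{t'}\wh\nu_l\|_2+2\pi R|t-t'|+2\|\nu''\|_1.
\]
Optimizing $R$ (balancing $R\d$ against $l/R^2$) and integrating, then substituting back into the Plancherel estimate and using $sT\asymp 1$ produces the exponent $(d-1)/(2(d+1))$ after the powers of $s$ combine.

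The main obstacle is the transfer: the optimized balance $R\asymp(l/\d)^{1/3}$ yields a pointwise transfer error of order $(l\d^2)^{1/3}$, which carries an unwanted $\log^{1/3}(r^{-1})$ factor from $l\asymp\log(r^{-1})$. This must be absorbed by exploiting the fact that the bad contribution to the Fourier integral is weighted by $|B|/T\lesssim M/\sqrt L$, which the hypothesis $L>CM^2$ renders small; the constant $C=C(d)$ in that hypothesis must be chosen with foresight of the final optimization. Getting the precise exponent $(d-1)/(2(d+1))$ requires delicately tracking the interaction of the $s^{d/2}$ factor from Plancherel, the $t^{d-1}$ weight in the integral, the polynomial decay on $G$, and the transfer error on $B$.
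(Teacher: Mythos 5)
Your overall strategy — Plancherel plus a dichotomy from Proposition~\ref{proposition:preliminary} plus a Lipschitz transfer between nearby sphere radii — is the same skeleton the paper uses, but the key lemma you rely on in the transfer step is the wrong one, and the loss you flag at the end is fatal, not cosmetic.

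Concretely, the transfer requires a bound on the contribution of the far part of the measure to the Fourier $L^2$-average on spheres that is \emph{uniform in $l$}. You get this by Chebyshev: you restrict $\nu_l$ to $B(R,0)$ and bound the tail mass by $\|\nu''\|_1 \lesssim l/R^2$. After optimizing $R\asymp(l/\d)^{1/3}$ you find a transfer error of order $(l\d^2)^{1/3}$, and with $\d^2\asymp L^{-1}$ and $l\asymp L\log(r^{-1})$ this is $\asymp(\log r^{-1})^{1/3}$, which does \emph{not} go to zero. You then hope to kill it with the factor $|B|/T\lesssim M/\sqrt L$, but that factor is a constant (depending on $d$, $L$, $M$ but not on $r$), so the product still diverges relative to $r^{(d-1)/(2(d+1))}$ as $r\to0$. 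There is no way to absorb a $\log^{1/3}(r^{-1})$ factor into a fixed negative power of $r^{-1}$ with constants that depend only on $d$; enlarging the constant $C$ in the hypothesis $L>CM^2$ cannot make a $\log(r^{-1})$ growth bounded for all $r\le1/2$.

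The missing ingredient is the stationary-phase estimate for the Fourier transform of the sphere, which is the actual content of Lemma~\ref{lemma:decomposition} in the paper. Applied to $\eta:=\nu_l*\check\nu_l$ (so that $\wh\eta=|\wh\nu_l|^2\ge0$, hence $\|\Res_t\wh\nu_l\|_2^2=\int_{|\xi|=t}\wh\eta$), one decomposes $\eta=\eta_1+\eta_2$ with $\eta_1=\eta\restr{B(0,r)}$, and for $\eta_2$ one bounds $\bigl|\int_{|\xi|=R}\wh\eta_2\bigr|\le C(R/r)^{(d-1)/2}$ using the decay $\bigl|\int_{|\xi|=1}e(\langle Rx,\xi\rangle)\,d\xi\bigr|\le C|Rx|^{-(d-1)/2}$ (oscillation/curvature, not mass). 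This bound is completely independent of $l$, which is what makes the exponent come out clean. Your Chebyshev estimate controls the \emph{mass} of the tail, whereas the oscillatory estimate controls the \emph{average} of the tail's Fourier transform on large spheres; the latter is far stronger in this regime because it does not degrade as the walk spreads out. Without this, the scheme cannot close.

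As a minor additional remark, you should be working with $\eta=\nu_l*\check\nu_l$ rather than $\nu_l$ itself: the Lipschitz transfer inequality you write down for $\|\Res_t\wh\nu_l\|_2$ becomes a transfer of the (nonnegative) quantity $\int_{|\xi|=t}\wh\eta$, which is exactly what the sphere average and the oscillatory estimate naturally apply to.
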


This proposition follows easily from the following estimate on the Fourier transform.

\begin{prp}\label{proposition:decay2}
Suppose that \eqref{equation:symmetric}--\eqref{equation:moment3} hold.
Then there is a constant $C$ depending only on the dimension $d$, such that
the following holds.
Let $L>CM^2$ be a number.
Then for every $R\ge 2$ and $l\ge L\log R$, we have
\[
(L^{-1/2}R)^{-(d-1)}\cdot\int_{|\xi|=L^{-1/2}R}|\wh\nu_l(\xi)|^2\igap d\xi
\le C R^{-\frac{d-1}{d+1}}.
\]
\end{prp}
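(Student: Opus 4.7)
My plan is to reduce the bound to a decay estimate on the operator $S_r^l$, combine this with Proposition~\ref{proposition:preliminary}, and use a Fourier transfer argument between nearby spheres.

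Iterating \eqref{equation:Sr} gives $\Res_r\wh{\nu_l} = S_r^l\Res_r\wh{\delta_{x_0}}$, and since $\Res_r\wh{\delta_{x_0}}(\xi)=e(r\langle\xi,x_0\rangle)$ has unit modulus its $L^2(S^{d-1})$-norm equals $|S^{d-1}|^{1/2}$. With $r=L^{-1/2}R$, the statement of Proposition~\ref{proposition:decay2} is therefore equivalent to
\[
\|S_r^l\Res_r\wh{\delta_{x_0}}\|_{L^2(S^{d-1})}^2 \le CR^{-(d-1)/(d+1)}.
\]
I now apply the second part of Proposition~\ref{proposition:preliminary} to the pair $(r,r+\delta)$ with $\delta=cM^{-1}$, where $c$ is the constant of that proposition. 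Two cases arise. If $\|S_r\|\le 1-c'M^{-2}$ then $\|S_r^l\|^2\le\exp(-2c'l/M^2)\le R^{-2c'L/M^2}$, which is at most $R^{-(d-1)/(d+1)}$ provided $L>CM^2$ with $C$ large enough; this finishes the easy case. Otherwise $\|S_{r+\delta}\|\le 1-c'M^{-2}$, and by the identity above applied at radius $r':=r+\delta$ the spherical $L^2$-average of $\wh{\nu_l}$ at radius $r'$ is at most $|S^{d-1}|R^{-K}$ for any prescribed $K$, obtained by enlarging $C$.

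For the transfer from $r'$ back to $r$ (with $|r-r'|\le cM^{-1}$) I follow the strategy indicated in Section~\ref{section:idea}: decompose $\nu_l=\nu_l^{(1)}+\nu_l^{(2)}$ where $\nu_l^{(1)}$ is the restriction of $\nu_l$ to the ball $B(D,0)$ of a radius $D$ to be chosen. Then $\wh{\nu_l^{(1)}}$ is Lipschitz with constant $2\pi D$, so its spherical $L^2$-norm shifts by at most $2\pi D\delta|S^{d-1}|^{1/2}$ between radii $r'$ and $r$; and for the tail $\nu_l^{(2)}$, Minkowski's inequality in $L^3$ together with \eqref{equation:moment3} gives $\E|Y_l|^3\le C(lM^{1/3}+|x_0|)^3$, whence Chebyshev yields $\|\nu_l^{(2)}\|\le C(l^3M+|x_0|^3)/D^3$ and hence a uniform bound on $|\wh{\nu_l^{(2)}}|$. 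The triangle inequality then controls the spherical $L^2$-norm at radius $r$ by the spectral-gap bound at $r'$, the Lipschitz error $D\delta$, and the tail error $(l^3M+|x_0|^3)/D^3$.

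The main obstacle is the simultaneous calibration of $D$, $\delta$ and the constant $L/M^2$. Because $\delta$ can be kept of order $M^{-1}$ independently of $R$, enlarging $C$ in $L>CM^2$ drives the spectral-gap contribution at radius $r'$ below $R^{-K}$ for any fixed $K$; the slack so gained must then absorb the Lipschitz transfer error $D\delta$ and the tail-mass error $(l^3M+|x_0|^3)/D^3$. Choosing $D$ to equalize these two error sources and keep both beneath $R^{-(d-1)/(2(d+1))}$, while taking $K$ (and hence $C$) large enough to dominate them, is the delicate balancing that makes the estimate close. This is precisely where the third-moment assumption \eqref{equation:moment3} enters, both through the constraint $\delta\le cM^{-1}$ of Proposition~\ref{proposition:preliminary} and through the $L^3$-based tail bound on $\nu_l^{(2)}$.
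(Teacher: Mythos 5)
The reduction $\Res_r\wh\nu_l=S_r^l\Res_r\wh\delta_{x_0}$ and the two-radius dichotomy from Proposition~\ref{proposition:preliminary} are the right opening moves and match the paper, but the transfer step has a gap that cannot be closed in your framework. You control the tail $\nu_l^{(2)}=\nu_l\restr{\{|x|>D\}}$ by Chebyshev through the third moment: $\|\nu_l^{(2)}\|\lesssim\E|Y_l|^3/D^3$, hence $|\wh{\nu_l^{(2)}}|\lesssim\E|Y_l|^3/D^3$ everywhere. But $\E|Y_l|^3$ grows at least like $l^{3/2}$ (and also like $|x_0|^3$), whereas the proposition must hold for \emph{every} $l\ge L\log R$ and \emph{every} $x_0\in\R^d$ with a constant $C$ depending only on $d$. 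For any fixed $D$ the mass of $\nu_l$ outside $B(0,D)$ tends to $1$ as $l\to\infty$; to suppress it you must grow $D$ with $l$ (and with $|x_0|$), which inflates the Lipschitz transfer error $D\delta$ without bound. So no choice of $D$ makes both error terms small uniformly over the allowed $(l,x_0)$, and the claimed calibration does not close.

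The paper's tail control is of a fundamentally different nature, and it is exactly what makes the estimate uniform. It decomposes not $\nu_l$ but $\eta:=\nu_l*\check\nu_l$, so that $\wh\eta=|\wh\nu_l|^2\ge 0$ and the target quantity is the spherical integral of $\wh\eta$. The tail $\eta_2=\eta\restr{\{|x|>r\}}$ is then bounded not by its mass but via the stationary-phase decay $\left|\int_{|\xi|=1}e(\langle Rx,\xi\rangle)\,d\xi\right|\le C|Rx|^{-(d-1)/2}$, yielding $\left|\int_{|\xi|=R}\wh\eta_2(\xi)\,d\xi\right|\le C(R/r)^{(d-1)/2}$ (Lemma~\ref{lemma:decomposition}); this bound holds \emph{uniformly over all probability measures} $\eta$ with no reference to moments, to $l$, or to $x_0$. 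That oscillatory-integral estimate, not a concentration estimate, is what the phrase ``average Fourier transform on spheres decays fast'' in Section~\ref{section:idea} refers to; reading it as a Chebyshev truncation bound is where your argument goes astray.
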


The proof is based on the following simple lemma, which
provides a decomposition of an arbitrary probability measure into two parts, one whose Fourier transform
has small Lipschitz norm and one whose Fourier transform has small averages on large balls.
This implies that if the Fourier transform has large average on a sphere
then it must have large averages on spheres nearby, as well.
We will apply this to the measure $\nu_l*\check \nu_l$, to get a similar statement about $L^2$ averages
of $\wh\nu_l$.
We will compare this with the results of Section \ref{section:preliminary}, and conclude
that the $L^2$ averages of $\wh\nu_l$ on spheres must decay fast, as stated in Proposition \ref{proposition:decay2}.

\begin{lem}\label{lemma:decomposition}
Let $\eta$ be a probability measure on $\R^d$.
Then, for any $r\ge 0$, there are measures $\eta_1$
and $\eta_2$ such that
$\eta=\eta_1+\eta_2$,
\[
\|\wh\eta_1\|_{\Lip}\le Cr \quad{\rm and}\quad
\left|\int_{|\xi|=R}\wh\eta_2(\xi)\igap d\xi\right|\le C\left(\frac{R}{r}\right)^{(d-1)/2}.
\]
\end{lem}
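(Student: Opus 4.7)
The natural decomposition is the physical-space truncation: set $\eta_1 = \eta|_{B(r,0)}$ and $\eta_2 = \eta|_{\R^d \setminus B(r,0)}$, so that $\eta = \eta_1 + \eta_2$. The plan is to get the Lipschitz bound on $\wh\eta_1$ from differentiation under the integral, and the spherical-average bound on $\wh\eta_2$ from the classical decay of the Fourier transform of surface measure on a sphere.

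For $\eta_1$, I would differentiate $\wh\eta_1(\xi) = \int_{|x|\le r} e(\langle\xi,x\rangle)\igap d\eta(x)$ inside the integral to get $\nabla \wh\eta_1(\xi) = -2\pi i \int_{|x|\le r} x \cdot e(\langle\xi,x\rangle)\igap d\eta(x)$, whence $\|\nabla \wh\eta_1\|_\infty \le 2\pi r \cdot \eta(B(r,0)) \le 2\pi r$. This gives $\|\wh\eta_1\|_{\Lip} \le 2\pi r$, as required.

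For $\eta_2$, I would apply Fubini to write
\[
\int_{|\xi|=R}\wh\eta_2(\xi)\igap d\xi = \int_{|x|>r} K_R(x)\igap d\eta(x),\qquad K_R(x) := \int_{|\xi|=R} e(\langle\xi,x\rangle)\igap d\xi.
\]
Rescaling, $K_R(x) = R^{d-1}\wh{\sigma}(Rx)$, where $\sigma$ is the normalized surface measure on $S^{d-1}$. The key classical fact I would invoke is the stationary-phase / Bessel-function estimate $|\wh\sigma(y)| \le C(1+|y|)^{-(d-1)/2}$, giving $|K_R(x)| \le C R^{d-1}(1+R|x|)^{-(d-1)/2}$. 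For $|x|>r$ this yields $|K_R(x)| \le C(R/r)^{(d-1)/2}$, the two cases $Rr\ge1$ and $Rr<1$ both being checked directly (in the former, the denominator is $(Rr)^{(d-1)/2}$; in the latter, $R^{d-1}\le(R/r)^{(d-1)/2}$ since $Rr\le1$). Integrating against the probability measure $\eta_2$ (of mass $\le 1$) gives the advertised bound.

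There is no real obstacle: the content is essentially the splitting into near and far parts of physical space plus the standard decay of $\wh{\sigma_{S^{d-1}}}$. The only place to be slightly careful is the case analysis in $Rr$ to ensure the single bound $C(R/r)^{(d-1)/2}$ is valid without extra hypotheses on the range of $r$ and $R$.
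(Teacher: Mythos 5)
Your proof is correct and is essentially the paper's argument: the same truncation $\eta_1=\eta|_{B(0,r)}$, the same differentiation under the integral for the Lipschitz bound, and the same reduction to the classical decay of $\wh\sigma_{S^{d-1}}$. The only cosmetic difference is that the paper uses the bound in the form $|\wh\sigma(y)|\le C|y|^{-(d-1)/2}$ (which already holds for all $y\neq0$, trivially near $0$), so when combined with $|x|>r$ it gives $C(R/r)^{(d-1)/2}$ directly and the case split on $Rr$ you perform is unnecessary.
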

\begin{proof}
Write $\eta_1=\eta\restr{B(0,r)}$.
Then for any unit vector $v\in\R^d$, we have
\[
\left|\frac{\partial\wh\eta_1(\xi)}{\partial v}\right|
\le\int_{B(0,r)}\left|\frac{\partial e(\langle x,\xi\rangle)}{\partial v}\right|\igap d\eta(x)
\le2\pi r.
\]
This immediately implies the first claim.

For the second claim, we write
\begin{align*}
\int_{|\xi|=R}\wh\eta_2(\xi)\igap d\xi
&=\int_{|\xi|=R}\int_{|x|>r}e(\langle x,\xi\rangle)\igap d\eta(x)d\xi\\
&= R^{d-1}\int_{|x|>r}\int_{|\xi|=1}e(\langle Rx,\xi\rangle)\igap d\xi d\eta(x).
\end{align*}
It is well-known (see e.g. \cite{Stein-Harmonic-analysis}*{Chapter VIII.6}, in particular formula (26) in that chapter)
that
\[
\left|\int_{|\xi|=1}e(\langle Rx,\xi\rangle)\igap d\xi\right|\le C|Rx|^{-(d-1)/2}
\]
with a number $C$ depending only on $d$.
This yields
\[
\left|\int_{|\xi|=R}\wh\eta_2(\xi)\igap d\xi\right|\le CR^{d-1}\cdot(Rr)^{-(d-1)/2},
\]
which was to be proved.
\end{proof}

\begin{proof}[Proof of Proposition \ref{proposition:decay2}]
Let $c_0$ be the constant $c$ from Proposition \ref{proposition:preliminary}.
Write $R_1=L^{-1/2}R-(c_0L/10)^{-1/2}$ and $R_2=L^{-1/2}R$.
Since $R\ge1$, we  have $R_1\ge R_2/2$.

By the assumption $L\ge CM^2$, we have $R_2-R_1\le c_0 M^{-1}$, if the constant $C$
is sufficiently large.
We apply Proposition \ref{proposition:preliminary} with $r_1=R_1$ and $r_2=R_2$.
If $\|S_{R_2}\|\le1-10L^{-1}$, then the proposition follows immediately
from the identity $\Res_{R_2}(\wh\nu_l)=S_{R_2}^l(\Res_{R_2}(\wh\d_{x_0}))$.
If this is not the case, then we have $\|S_{R_1}\|\le1-10L^{-1}$ by Proposition
\ref{proposition:preliminary}.

Now we fix $l\ge L\log R$.
Then
\be\label{equation:Rprime}
\int_{|\xi|=R_1} |\wh\nu_l(\xi)|^2\igap d\xi
\le C R_1^{d-1}(1-10L^{-1})^l
\le C R_2^{d-1}R^{-10}.
\ee
Recall that $R_2/2\le R_1\le R_2$.

Consider the measure $\eta=\nu_l*\check{\nu_l}$
(where as before $\check{\nu_l}$ is obtained from $\nu_l$ by reflection as in \eqref{eq_defcheck}).
Notice that \eqref{equation:Rprime} turns into
\be\label{equation:Rprime2}
\int_{|\xi|=R_1} \wh\eta(\xi)\igap d\xi\le C R_2^{d-1}R^{-10}.
\ee

We apply Lemma \ref{lemma:decomposition}
for $\eta$ with $r=L^{1/2}R^{-(d-1)/(d+1)}$.
Then we have
\begin{align*}
\left|\int_{|\xi|=R_1} \wh\eta_1(\xi)\igap d\xi\right|
&\le\left|\int_{|\xi|=R_1} \wh\eta(\xi)\igap d\xi\right|+\left|\int_{|\xi|=R_1} \wh\eta_2(\xi)\igap d\xi\right|\\
&\le C R_2^{d-1}R^{-10}+C\left(\frac{L^{-1/2}R}{L^{1/2}R^{-(d-1)/(d+1)}}\right)^{(d-1)/2}\\
&\le CR_2^{d-1}R^{-(d-1)/(d+1)}.
\end{align*}
Using the Lipschitz norm bound on $\wh\eta_1$ in the lemma, we get
\begin{align*}
\left|\int_{|\xi|=R_2} \wh\eta_1(\xi)\igap d\xi\right|
&\le\frac{R_2^{d-1}}{R_1^{d-1}}\left|\int_{|\xi|=R_1} \wh\eta_1(\xi)\igap d\xi\right|\\
&\qquad{}+C(R_2-R_1)L^{1/2}R^{-\frac{d-1}{d+1}}\cdot R_2^{d-1}\\
&\le CR_2^{d-1}R^{-\frac{d-1}{d+1}}.
\end{align*}
Finally, using the bound on $\eta_2$ in the lemma again, we get
\begin{align*}
\left|\int_{|\xi|=R_2}\wh\eta(\xi)\igap d\xi\right|
&=\left|\int_{|\xi|=R_2} \wh\eta_1(\xi)\igap d\xi\right|
+\left|\int_{|\xi|=R_2} \wh\eta_2(\xi)\igap d\xi\right|\\
&\le CR_2^{d-1}R^{-\frac{d-1}{d+1}}.
\end{align*}
This yields the claim upon substituting $\wh\eta=|\wh\nu_l|^2$.
\end{proof}

\begin{proof}[Proof of Proposition \ref{proposition:decay}]
Fix some $l\ge L\log(r^{-1})$.
Let $F:\R^d\to\R$ be a non-negative radially-symmetric function such that $F(x)\ge1$
for $|x|\le1$ and $\wh F$ is supported in the ball $B(1,0)$.
For $r\ge0$, write $F_{r,y_0}(x)=F((x-y_0)/r)$.
Then
\begin{align*}
\nu_l(B(L^{1/2}r,y_0))&\le\int F_{L^{1/2}r,y_0}(x)\igap d\nu_l(x)
=\int \wh F_{L^{1/2}r,y_0}(\xi)\wh\nu_l(\xi)\igap d\xi\\
&=L^{-1/2}\int_0^\infty\int_{|\xi|=L^{-1/2}R} \wh F_{L^{1/2}r,y_0}(\xi)\wh\nu_l(\xi)\igap d\xi dR\\
&\le L^{-1/2}\| F_{L^{1/2}r,y_0}\|_1\int_0^{r^{-1}}\int_{|\xi|=L^{-1/2}R} |\wh\nu_l(\xi)|\igap d\xi dR.
\end{align*}
Note that $ \wh F_{L^{1/2}r,y_0}$ is supported in $B(L^{-1/2}r^{-1},0)$.

By the Cauchy-Schwartz inequality and Proposition \ref{proposition:decay2}, we have
\begin{align*}
&(L^{-1/2}R)^{-(d-1)}\cdot\int_{|\xi|=L^{-1/2}R} |\wh\nu_l(\xi)|\igap d\xi\\
&\qquad{}\le C\left[(L^{-1/2}R)^{-(d-1)}\cdot\int_{|\xi|=L^{-1/2}R} |\wh\nu_l(\xi)|^2\igap d\xi\right]^{1/2}
\le C R^{-\frac{d-1}{2(d+1)}}.
\end{align*}
Strictly speaking, we proved this inequality only for $R\ge2$, however, it follows from
the trivial estimate $|\wh\nu_l(\xi)|\le1$ for $R\le 2$.

Notice that $\| F_{L^{1/2}r,y_0}\|_1=(L^{1/2}r)^{d}\|F\|_1$.
Combining these estimates, we get
\[
\nu_l(B(L^{1/2}r,y_0))\le (L^{1/2}r)^{d}\|F\|_1\cdot C L^{-d/2}r^{-\left(d-\frac{d-1}{2(d+1)}\right)},
\]
which was to be proved.
\end{proof}

\section{Sets of large dimension in compact Lie groups}
\label{section:compactgroups}

In this section, we examine sets and measures of ``large dimension" in compact groups. Here we use the word dimension somewhat loosely,
and only to illuminate the results by an informal
interpretation.
For our purposes, a set of ``large dimension" at scale $r$ is a set which contains at least
$r^{-a}$ disjoint balls of radius $r$, where $a$ is ``large", depending on the situation.
A measure of ``large dimension" at scale $r$ is one which puts at most $r^a$ mass on
a ball of radius $r$ with $a$ ``large".

We prove variants of results of Bourgain and Gamburd \cite{BG-SUd} and Saxc\'e \cite{Sax-largedim}.
We follow the treatment of Saxc\'e based on exploiting high multiplicities of irreducible components
in the regular representation.
This idea goes back to Sarnak and Xue \cite{SX-multiplicities} in a different setting.
For alternative treatments, see \cite{BG-SUd} and \cite{GJS-SU2} by Gamburd, Jakobson and Sarnak.

Throughout the section, let $G$ be a compact Hausdorff topological group.
We denote the Haar measure on $G$ by $m$, normalized to have total mass $1$.
Let $\pi$ be a unitary representation of $G$.
Recall \eqref{equation:piofmu}, the definition of $\pi(\mu)$.
If $f\in L^1(G)$, then we consider it as the density of a measure on $G$ and
define the operator $\pi(f)$ similarly to \eqref{equation:piofmu}.
When $\pi$ is a unitary representation of $G$, $\pi(f)$ is the analogue of
the Fourier coefficients in the theory of functions on $\R/\Z$.

We first present a corollary of Schur's Lemma.
Bourgain and Gamburd \cite{BG-SUd} exploited a variant of this result in their method
to establish norm estimates for operators related to random walks.

\begin{prp}\label{proposition:BG-largedim}
With notation as above, let $\pi$ be a unitary representation of $G$, and let $D$ be a number
such that all irreducible components of $\pi$ are of dimension at least $D$.
Then for any vectors $u,v$ in the representation space of $\pi$, we have
\[
\left[\int|\langle \pi(g)u,v \rangle|^2 \igap dm(g)\right]^{1/2}\le\frac{\|u\|\|v\|}{D^{1/2}}.
\]
\end{prp}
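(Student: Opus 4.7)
The plan is to decompose $\pi$ along its isotypic components and reduce the bound to the Schur orthogonality relations on each irreducible summand. Since $G$ is compact, by Peter--Weyl the representation space decomposes as an orthogonal Hilbert direct sum $H_\pi = \bigoplus_\alpha V_\alpha \otimes W_\alpha$, where $\alpha$ ranges over pairwise inequivalent irreducible unitary representations occurring in $\pi$, $V_\alpha$ is the irreducible space (of dimension $d_\alpha \ge D$ by hypothesis), and $W_\alpha$ is the multiplicity space on which $G$ acts trivially. Write $u = \sum_\alpha u_\alpha$ and $v = \sum_\alpha v_\alpha$ accordingly. Schur orthogonality for inequivalent irreducibles implies that the matrix coefficients $g \mapsto \langle \pi(g) u_\alpha, v_\alpha \rangle$ belonging to different $\alpha$'s are mutually orthogonal in $L^2(G,m)$, so the integral splits as $\sum_\alpha \int |\langle \pi(g) u_\alpha, v_\alpha\rangle|^2 \, dm(g)$.

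Within a single isotypic component, fix an orthonormal basis $\{e_k\}$ of $W_\alpha$ and expand $u_\alpha = \sum_k u_\alpha^{(k)} \otimes e_k$, $v_\alpha = \sum_k v_\alpha^{(k)} \otimes e_k$ with $u_\alpha^{(k)}, v_\alpha^{(k)} \in V_\alpha$. Since $G$ acts only on $V_\alpha$, one has $\langle \pi(g) u_\alpha, v_\alpha \rangle = \sum_k \langle \pi_\alpha(g) u_\alpha^{(k)}, v_\alpha^{(k)}\rangle$. Expanding the square and applying the Schur orthogonality relation
\[
\int \langle \pi_\alpha(g) x, y\rangle \overline{\langle \pi_\alpha(g) x', y'\rangle}\, dm(g) = d_\alpha^{-1} \langle x, x'\rangle \overline{\langle y, y'\rangle}
\]
to each $(k,l)$-term yields $\int |\langle \pi(g) u_\alpha, v_\alpha\rangle|^2 \, dm(g) = d_\alpha^{-1} \operatorname{tr}(U_\alpha V_\alpha)$, where $U_\alpha$, $V_\alpha$ are the positive semidefinite Gram matrices with entries $\langle u_\alpha^{(k)}, u_\alpha^{(l)}\rangle$ and $\langle v_\alpha^{(k)}, v_\alpha^{(l)}\rangle$ respectively.

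For positive semidefinite matrices $U, V$ one has $\operatorname{tr}(UV) \le \operatorname{tr}(U)\operatorname{tr}(V)$, and here $\operatorname{tr}(U_\alpha) = \|u_\alpha\|^2$, $\operatorname{tr}(V_\alpha) = \|v_\alpha\|^2$. Using $d_\alpha \ge D$ and summing over $\alpha$, together with the crude bound $\|v_\alpha\| \le \|v\|$ and the Parseval identity $\sum_\alpha \|u_\alpha\|^2 = \|u\|^2$, this gives
\[
\int |\langle \pi(g) u, v\rangle|^2 \, dm(g) \le \frac{1}{D} \sum_\alpha \|u_\alpha\|^2 \|v_\alpha\|^2 \le \frac{\|u\|^2 \|v\|^2}{D},
\]
which is the desired bound.

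The only real obstacle is the bookkeeping inside a single isotypic component: because the multiplicity space $W_\alpha$ may be nontrivial (and indeed, in the applications one expects it to have very large dimension), matrix coefficients that all belong to the same type $\alpha$ are not individually orthogonal, which forces one to track the Gram matrices $U_\alpha$, $V_\alpha$ and invoke the elementary trace inequality for PSD matrices. Everything else is a direct application of Schur orthogonality, hence reduces to classical representation theory.
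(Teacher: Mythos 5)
Your proof is correct, and it takes a genuinely different route from the paper's. The paper decomposes $\pi$ into individual irreducible summands $\pi_1\oplus\cdots\oplus\pi_n$, handles a single irreducible by Schur's lemma, and then bounds the full integral by Minkowski's inequality (the triangle inequality in $L^2(G,m)$) followed by a Cauchy--Schwarz step to collapse $\sum_i\|u_i\|\|v_i\|$ into $\|u\|\|v\|$ — a three-line estimate that never tries to compute the integral exactly. You instead decompose into isotypic components $V_\alpha\otimes W_\alpha$, use Schur orthogonality twice to evaluate the integral exactly as $\sum_\alpha d_\alpha^{-1}\operatorname{tr}(U_\alpha V_\alpha)$ in terms of the Gram matrices of the multiplicity vectors, and then invoke $\operatorname{tr}(UV)\le\operatorname{tr}(U)\operatorname{tr}(V)$ for PSD (trace-class) operators. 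What your version buys is that the only loss is concentrated in the trace inequality, so one can see exactly when the bound is tight (namely when each $W_\alpha$ is one-dimensional, so $U_\alpha V_\alpha$ is rank one). What the paper's version buys is brevity and the avoidance of any bookkeeping over multiplicity spaces: Minkowski simply sidesteps the fact that matrix coefficients from distinct copies of the same irreducible are not orthogonal — the very issue you flag and then handle explicitly via the Gram matrices. Both arguments rest on the same underlying input, Schur orthogonality, and arrive at the same constant $D^{-1/2}$.
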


We illustrate the purpose of Proposition \ref{proposition:BG-largedim} by sketching how it
can be used  to estimate the norm of $\pi(\mu)$ for a measure of ``large
dimension".
To this end, we can approximate $\mu$ with a measure with bounded density $f$ and write
\[
|\langle\pi(f)u,v\rangle|\le\int \|f\|_\infty\cdot|\langle\pi(g)u,v\rangle|\igap dm(g).
\]
If $\mu$ is of ``large dimension" at some scale $r$, and $\pi$ is ``not sensitive" to perturbations
at this scale and $D$ is ``large" compared to $r^{-1}$, then the above bound combined with the
proposition is non-trivial.
The proposition will be used to obtain similar results for the non-compact
group $\Isom(\R^d)$, see Propostion \ref{proposition:largedimension}.
Then we will execute an argument similar to the above sketch (cf. Section \ref{section:completing}).

The following result is due to Saxc\'e \cite{Sax-largedim}*{Proposition 4.5}.
It allows us to find a large open ball in the product set of three sets of large dimension.
This is an analogue of results of Gowers
\cite{Gow-quasirandom} and Nikolov and Pyber \cite{NP-product} in finite groups.

\begin{prp}\label{proposition:Saxce}
Let $G$ be a connected semisimple compact Lie group endowed with a probability Haar measure $m$.
There is a constant $C$ depending on the group $G$ such that the following holds.
Let $A_1,A_2,A_3\subset G$ be Borel subsets.
Then the set $A_1A_2A_3$ contains an open ball of radius at least
\[
\frac{1}{C}(m(A_1)m(A_2)m(A_3))^C.
\]
\end{prp}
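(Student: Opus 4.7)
The proof of Proposition~\ref{proposition:Saxce} follows Saxc\'e's Fourier-analytic approach on the compact group $G$. Let $F = 1_{A_1} * 1_{A_2} * 1_{A_3}$; this is a continuous non-negative function on $G$ (convolution of $L^2$ functions on a compact group is continuous), with $\int F\,dm = \bar F := m(A_1) m(A_2) m(A_3)$ and $\supp F \subset \overline{A_1 A_2 A_3}$. The proposition will follow if I can produce an $x_0 \in G$ such that $F$ is strictly positive throughout an open ball $B(x_0,\rho)$ with $\rho$ polynomially small in $\bar F$.

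My plan is to expand $F$ in the Peter--Weyl decomposition $L^2(G) = \bigoplus_\pi V_\pi^{d_\pi}$ and isolate the constant contribution $\bar F$ of the trivial representation. The oscillatory part involves $\pi(1_{A_i})$ for $\pi$ nontrivial, and the basic estimates $\|\pi(1_A)\|_{\rm op} \le m(A)$ together with Plancherel $\sum_\pi d_\pi \|\pi(1_A)\|_{\HS}^2 = m(A)$ give $L^2$ control on $F-\bar F$. To upgrade this to $L^\infty$ control, I would smooth by a nonnegative $C^\infty$ bump $\phi_\rho$ supported on $B(e,\rho)$ with $\int \phi_\rho = 1$. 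Two features of a connected semisimple compact Lie group combine to produce rapid decay: integration by parts against the Casimir operator shows that $\|\pi(\phi_\rho)\|_{\rm op}$ is super-polynomially small in $\rho^2 c_\pi$, where $c_\pi$ is the Casimir eigenvalue of $\pi$, while Weyl's dimension formula bounds $d_\pi$ polynomially in $c_\pi$ and controls the number of irreducibles at each Casimir level. Combining these ingredients and summing over $\pi \neq 1$ should yield an estimate of the form
\[
\|F*\phi_\rho - \bar F\|_\infty \le C_G\, \rho^{-s}\, \bar F^{1/2}
\]
for some $s = s(G) > 0$. Choosing $\rho = c_G\, \bar F^{C_G}$ with $C_G$ sufficiently large then forces $F*\phi_\rho > 0$ on all of $G$.

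The hardest step is then passing from the positivity of the smoothed function $F * \phi_\rho$ to an actual open ball contained in $A_1 A_2 A_3$. The naive implication of $F*\phi_\rho(x) > 0$ is only that $x \in A_1 A_2 A_3 \cdot B(e,\rho)$, i.e.\ that $A_1 A_2 A_3$ is $\rho$-dense in $G$; this does not automatically give ball containment, since even subsets of positive measure (e.g.\ fat Cantor sets) can be dense without containing balls. To close this gap I would combine the density information with a quantitative Steinhaus-type argument: the ``difference set'' $(A_1 A_2 A_3)(A_1 A_2 A_3)^{-1}$ contains a neighborhood of the identity whose radius is controlled by $\bar F$, and the density property can be leveraged to locate an interior point of $A_1 A_2 A_3$ together with a quantitative open neighborhood, at the cost of further shrinking the radius by a polynomial factor in $\bar F$ (thereby worsening the exponent $C$ in the statement). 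Executing this final upgrade cleanly while preserving the polynomial dependence on $m(A_1)m(A_2)m(A_3)$ is the main technical difficulty I would need to navigate.
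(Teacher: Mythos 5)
Your Fourier-analytic framework is close to the paper's, but the final step is a genuine gap, and the Steinhaus-type patch you suggest will not close it. The difference set of a measurable set can contain a neighbourhood of the identity even when the set itself contains no ball at all (fat Cantor sets already show this in $\R$), so ``$\rho$-density of $A_1A_2A_3$ plus Steinhaus'' does not yield an interior ball, and no polynomial loss in the radius fixes that. The issue is created by the decision to smooth: once you convolve $F=1_{A_1}*1_{A_2}*1_{A_3}$ with $\phi_\rho$, positivity of $F*\phi_\rho$ loses its link with membership in the product set, exactly as you observe.

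The observation you are missing is that you should never need to smooth $F$ in the spatial sense, because positivity of $F$ itself already gives membership in the product set: if $F(g)>0$ then $\int\!\!\int 1_{A_1}(h_1)1_{A_2}(h_2)1_{A_3}(h_2^{-1}h_1^{-1}g)\,dh_1\,dh_2>0$, so there are $h_i\in A_i$ with $g=h_1h_2h_3$; thus the open set $\{F>0\}$ is contained in $A_1A_2A_3$ (this is stronger than your remark that $\supp F\subset\overline{A_1A_2A_3}$). So the whole problem reduces to showing $F>0$ on some ball. The paper does this by a \emph{frequency} truncation rather than a spatial one: write $F=\f_0+\f_1$, where $\f_0$ collects the Peter--Weyl components with highest weight $\le r$ and $\f_1$ the rest. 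By Plancherel and the dimension bound $\dim\pi_v\ge|v|^a$, the tail obeys $\|\f_1\|_\infty\le r^{-a/2}\|f_1\|_2\|f_2\|_2\|f_3\|_2$; by the Lipschitz estimate for functions of bounded spectral support (Lemma 20 of \cite{Var-compact}, quoted as Lemma \ref{lemma:Lipschitz} in the paper), $\|\f_0\|_\Lip\le Cr^C\|F\|_2$. Since $\int F\,dm=m(A_1)m(A_2)m(A_3)$, some $g_0$ has $F(g_0)$ at least that large, and for $g$ within $\rho$ of $g_0$ one gets $F(g)\ge m(A_1)m(A_2)m(A_3)-2r^{-a/2}-Cr^C\rho$. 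Choosing $r$ polynomially large and then $\rho$ polynomially small in $m(A_1)m(A_2)m(A_3)$ makes the right side positive and finishes the proof. This is the same spectral machinery you invoke (Casimir/Weyl dimension estimates are what make $\dim\pi_v\ge|v|^a$ and the Lipschitz lemma work), but deployed directly on $F$ so that the conclusion is positivity of $F$, not of a mollified surrogate.
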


We note that Saxc\'e's formulation of this result does not estimate the radius of the ball, which is crucial
for our application (on the other hand Saxc\'e's statement deals with sets of large Hausdorff dimension but zero Haar measure), therefore we reproduce the result with essentially the same proof.

The rest of this section is devoted to the proof of Propositions~\ref{proposition:BG-largedim} and~\ref{proposition:Saxce}.
We denote the set of irreducible unitary representations of $G$ (up to isomorphism) by $\wh G$.
By the theorem of Peter and Weyl, these are all finite dimensional, and if $f\in L^2(G)$,
we have the analogue of Plancherel's formula:
\be{\label{equation:plancherel}}
\|f\|_2^2=\sum_{\pi\in\wh G}\dim \pi\|\pi(f)\|_\HS^2,
\ee
where $\|X\|_\HS=\Tr(X^*X)^{1/2}$ is the Hilbert-Schmidt norm of the operator $X$ (c.f. \cite[Sect. I.5]{Knapp-representations}).
Moreover, we have the Fourier inversion formula
\be\label{equation:Finversion}
f(g)=\sum_{\pi\in\wh G}\dim \pi \Tr(\pi(g)\pi(f)).
\ee

\begin{lem}\label{lemma:Fourierbound}
With notation as above,
let $f\in L^2(G)$ and $\pi$ be an irreducible unitary representation of $G$.
Then
\[
\|\pi(f)\|_\HS\le\frac{\|f\|_2}{\sqrt{\dim \pi}}.
\]
\end{lem}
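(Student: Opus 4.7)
The plan is to obtain this lemma as an immediate consequence of the Plancherel identity \eqref{equation:plancherel} which is quoted just above the lemma. Since every term appearing in the sum
\[
\|f\|_2^2=\sum_{\pi'\in\wh G}\dim \pi'\,\|\pi'(f)\|_\HS^2
\]
is non-negative (each is a squared Hilbert--Schmidt norm times a positive integer), dropping all terms except the one corresponding to the specific representation $\pi$ in question yields the inequality
\[
\dim\pi\cdot \|\pi(f)\|_\HS^2 \le \|f\|_2^2.
\]
Rearranging and taking square roots gives exactly the stated bound $\|\pi(f)\|_\HS\le \|f\|_2/\sqrt{\dim\pi}$.

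There is essentially no obstacle: the work has already been done by invoking Peter--Weyl/Plancherel, which is cited immediately before the statement. I would write the proof as one or two lines, and would not need to unpack the definition of $\pi(f)$ or the Hilbert--Schmidt norm beyond what is already in the text. If one wanted an entirely self-contained derivation without appealing to \eqref{equation:plancherel}, one could instead expand $\|\pi(f)\|_\HS^2 = \Tr(\pi(f)^*\pi(f))$ using the definition $\pi(f)=\int f(g)\pi(g)\,dm(g)$, producing a double integral in $f(g_1)\overline{f(g_2)}$ against the matrix coefficient $\Tr(\pi(g_2^{-1}g_1))$, and then apply the Schur orthogonality relation for matrix coefficients together with Cauchy--Schwarz on $f$; but this would just be re-deriving the Plancherel identity in the special case of a single isotypic component, so the one-line argument from \eqref{equation:plancherel} is preferable.
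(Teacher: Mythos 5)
Your proof is correct and is essentially identical to the paper's, which likewise deduces the bound in one line from the Plancherel identity \eqref{equation:plancherel} by observing that every term in the sum is non-negative and keeping only the $\pi$-term.
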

\begin{proof}
This follows from \eqref{equation:plancherel}, since all terms are non-negative on the right hand side.
\end{proof}

\begin{proof}[Proof of Proposition \ref{proposition:BG-largedim}]
We first note that when $\pi$ is irreducible then the statement is contained in Schur's lemma,
see e.g. \cite[Corollary 1.10 (b)]{Knapp-representations}.

If $\pi$ is not irreducible, then 
we decompose it as the sum of irreducible components $\pi_1\oplus\ldots\oplus\pi_n$,
and write $u=u_1+\ldots + u_n$ and $v=v_1+\ldots +v_n$, where $u_i$ and $v_i$
are the components of $u$ and $v$ in the space of $\pi_i$.

We can write using Minkowski's inequality, the irreducible case and the Cauchy-Schwartz inequality
\begin{align*}
\left[\int|\langle \pi(g)u,v \rangle|^2\igap dm(g)\right]^{1/2}
&=\left[\int\left|\sum_{i=1}^n\langle \pi_i(g)u_i,v_i \rangle\right|^2\igap dm(g)\right]^{1/2}\\
&\le\sum_{i=1}^n\left[\int|\langle \pi_i(g)u_i,v_i \rangle|^2\igap dm(g)\right]^{1/2}\\
&\le\sum_{i=1}^n\frac{\|u_i\|\|v_i\|}{D^{1/2}}\\
&\le\frac{(\sum_{i=1}^n\|u_i\|^2)^{1/2}(\sum_{i=1}^n\|v_i\|^2)^{1/2}}{D^{1/2}}\\
&=\frac{\|u\|\|v\|}{D^{1/2}}.
\end{align*}
\end{proof}

We turn to the proof of Proposition \ref{proposition:Saxce}.
This requires some basic information about the representation theory of semisimple
Lie groups.
The irreducible unitary representations of a semisimple compact Lie group $G$ can be parametrized by
integer vectors $v$ called highest weights.
We denote by $\pi_v\in\wh G$ the irreducible representation with highest weight $v$ and note that
by Weyl's dimension formula \cite[Thm. 4.48]{Knapp-representations}, we have the bounds $|v|^{a}\le\dim\pi_v\le |v|^b$ with some constants 
$a,b$ depending only on $G$.

We also need to bound the Lipschitz norm of a function contained in representations of small
highest weights.
Let $r>0$ be a number and write $\cH_r<L^2(G)$ for the sum of the irreducible components with highest
weight $|v|\le r$ in the regular representation of $G$.
We recall the following simple estimate from \cite{Var-compact}.

\begin{lem}[{\cite{Var-compact}*{Lemma 20}}]\label{lemma:Lipschitz}
For any semisimple compact Lie group, there is a constant $C$ such that
$\|f\|_\Lip\le Cr^C\|f\|_2$ for any functions $f\in\cH_r$
\end{lem}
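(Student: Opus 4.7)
The plan is to reduce the Lipschitz bound to an $L^\infty$ bound on first derivatives, then to reduce $L^\infty$ bounds to $L^2$ bounds via Sobolev embedding on the compact manifold $G$, and finally to estimate $L^2$ norms of iterated derivatives directly using the isotypic decomposition. Throughout, fix once and for all a bi-invariant Riemannian metric on $G$ (coming, say, from the negative of the Killing form) and identify the Lie algebra $\g$ with the space of left-invariant vector fields; write $X f$ for the derivative of $f$ in the direction of $X\in\g$.

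The key $L^2$-estimate is that for every $X\in\g$ with $\|X\|=1$ and every $f\in\cH_r$,
\[
\|Xf\|_2\le C|r|\cdot\|f\|_2.
\]
To see this, decompose $f=\sum_{|v|\le r}f_v$ where $f_v$ lies in the $\pi_v$-isotypic subspace. This subspace is $G$-invariant under both translations, so $Xf_v$ stays in it. On each copy of $\pi_v$ the operator $X$ acts as $d\pi_v(X)$, and from the Casimir identity $\sum_i d\pi_v(X_i)^*d\pi_v(X_i)=c_v\cdot I$ (with $\{X_i\}$ an orthonormal basis of $\g$ and $c_v=\langle v,v+2\rho\rangle\le C|v|^2$), one obtains $\|d\pi_v(X)\|_{\mathrm{op}}\le C|v|$ for every unit $X\in\g$. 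Summing over $v$ using orthogonality of isotypic components gives the claim. Iterating this inequality, for any $X_1,\ldots,X_k\in\g$ of unit norm and any $f\in\cH_r$,
\[
\|X_1\cdots X_k f\|_2\le C^k r^k\|f\|_2,
\]
since $Xf\in\cH_r$ whenever $f\in\cH_r$.

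Finally, pick $k>\dim G/2$; the Sobolev embedding on the compact manifold $G$ (which can be stated in terms of iterated Lie algebra derivatives, equivalent up to a constant to the usual local $W^{k,2}$-norm) gives
\[
\|h\|_\infty\le C\sum_{j\le k}\sup_{X_1,\ldots,X_j}\|X_1\cdots X_j h\|_2
\]
for every smooth $h$ on $G$. Applying this to $h=Xf$ for an arbitrary unit $X\in\g$ and using the iterated $L^2$-estimate, $\|Xf\|_\infty\le C'r^{k+1}\|f\|_2$. Taking the supremum over $X$ of unit norm controls the Lipschitz constant of $f$ with respect to the bi-invariant metric, yielding $\|f\|_{\mathrm{Lip}}\le Cr^C\|f\|_2$ for the exponent $C=k+1$, depending only on $G$.

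The main step to be careful with is the operator norm bound $\|d\pi_v(X)\|_{\mathrm{op}}\le C|v|$; everything else is routine Fourier/Sobolev analysis. This bound is a standard consequence of the Casimir eigenvalue formula together with positivity, but it is what ties the spectral parameter $r$ (the norm of the highest weight) to the analytic regularity we want.
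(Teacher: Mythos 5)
The paper does not prove this lemma itself; it simply imports it from \cite{Var-compact}*{Lemma 20}, so there is no in-text proof to compare against. Your argument is nonetheless a correct and self-contained proof: on each isotypic component $\cH_v$ (preserved by left-invariant vector fields, since these commute with the left regular action) the Casimir identity $\sum_i d\pi_v(X_i)^*d\pi_v(X_i)=c_v I$ with $c_v=\langle v,v+2\rho\rangle$ gives $\|d\pi_v(X)\|_{\rm op}\le\sqrt{c_v}\le C(1+|v|)$ for any unit $X$, and iterating plus Sobolev embedding with $k>\dim G/2$ yields $\|f\|_{\Lip}\le C(1+r)^{k+1}\|f\|_2$, which is the asserted bound (for $r$ bounded below by a positive constant, as is the case throughout the paper, $(1+r)^{k+1}\le Cr^{k+1}$). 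One cosmetic correction: write $c_v\le C(1+|v|)^2$ rather than $c_v\le C|v|^2$, since $\langle v,2\rho\rangle$ dominates for small $|v|$; this does not affect the conclusion.
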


\begin{proof}[Proof of Proposition \ref{proposition:Saxce}]
We write $f_i$ for the indicator function of $A_i$.
We estimate $\Tr(\pi(g)\pi(f_1*f_2*f_3))$ in terms of the Hilbert-Schmidt norm of the
Fourier coefficients $\pi(f_i)$.
If $X=(X_{i,j}),Y=(Y_{i,j})$ are any square matrices, then by the Cauchy-Schwartz inequality, we have
\begin{align*}
|\Tr(X^*Y)|&=\left|\sum_i\sum_k\overline{X_{k,i}}Y_{k,i}\right|\\
&\le\left(\sum_{k,i}|X_{k,i}|^2\right)^{1/2}\left(\sum_{k,i}|Y_{k,i}|^2\right)^{1/2}
=\|X\|_{\HS}\|Y\|_\HS.
\end{align*}
We use this with $X=\pi_v(g)\pi_v(f_1)$ and $Y=\pi_v(f_2)\pi_v(f_3)$ and get
\begin{align*}
|\Tr(\pi_v(g)\pi_v(f_1*f_2*f_3))|&\le\|\pi_v(g)\pi_v(f_1)\|_\HS\|\pi_v(f_2)\pi_v(f_3)\|_\HS\\
&\le\|\pi_v(f_1)\|_\HS\|\pi_v(f_2)\|_\HS\|\pi_v(f_3)\|_\HS.
\end{align*}

We fix a number $r>0$ to be specified later,
and write $f_1*f_2*f_3=\f_0+\f_1$, where $\f_1$
is the tail of the series \eqref{equation:Finversion}:
\begin{align*}
|\f_1(g)|&:=\left|\sum_{|v|>r}\dim{\pi_v}\Tr(\pi_v(g)\pi_v(f_1*f_2*f_3))\right|\\
&\le\sum_{|v|>r}\dim{\pi_v}\|\pi_v(f_1)\|_\HS\|\pi_v(f_2)\|_\HS\|\pi_v(f_3)\|_\HS.
\end{align*}
We use Lemma \ref{lemma:Fourierbound} together with the bound $\dim\pi_v\ge r^a$
to estimate $\|\pi_v(f_1)\|_\HS$
and then use the Cauchy-Schwartz inequality and Plancherel's formula \eqref{equation:plancherel}:
\begin{align}
|\f_1(g)|&\le
\sum_{|v|>r}\dim{\pi_v}r^{-a/2}\|f_1\|_2\|\pi_v(f_2)\|_\HS\|\pi_v(f_3)\|_\HS\nonumber\\
&\le r^{-a/2}\|f_1\|_2\left(\sum_{|v|>r}\dim{\pi_v}\|\pi_v(f_2)\|_\HS^2\right)^{1/2}\nonumber\\
&\qquad{}\times\left(\sum_{|v|>r}\dim{\pi_v}\|\pi_v(f_3)\|_\HS^2\right)^{1/2}\nonumber\\
&\le r^{-a/2}\|f_1\|_2\|f_2\|_2\|f_3\|_2.\label{equation:supestimate}
\end{align}

Since $m$ is a probability measure, there is a point $g_0\in G$ such that
\be\label{equation:g0}
|f_1*f_2*f_3(g_0)|\ge\|f_1*f_2*f_3\|_1=m(A_1)m(A_2)m(A_3).
\ee
We fix a number $\rho>0$ to be specified later and prove that $|f_1*f_2*f_3(g)|>0$  if
$\dist(g,g_0)\le\rho$.
By Lemma \ref{lemma:Lipschitz}, we have
\begin{align*}
|f_1*f_2*f_3(g_0)-f_1&*f_2*f_3(g)|
\le|\f_0(g)-\f_0(g_0)|+|\f_1(g)-\f_1(g_0)|\\
&\le|\f_1(g)|+|\f_1(g_0)|+Cr^C\rho\|f_1*f_2*f_3\|_2.
\end{align*}
We combine this with \eqref{equation:supestimate} and \eqref{equation:g0} and use the trivial estimates $\|f_i\|_2\le1$:
\[
|f_1*f_2*f_3(g)|\ge m(A_1)m(A_2)m(A_3)-2r^{-a/2}-Cr^C\rho.
\]

We now take
\[
r=(m(A_1)m(A_2)m(A_3)/10)^{-2/a}\quad{\rm and}\quad
\rho=r^{-C-a/2}/10C
\]
and conclude the proof.
\end{proof}

\section{The Bourgain--Gamburd method: flattening}
\label{section:bourgaingamburd1}

In this and the next section, we recall the Bourgain--Gamburd meth\-od
and adapt it to prove Theorem \ref{theorem:technical}.
The method has been developed in
\cite{BG-prime}, \cite{BG-SU2} and several subsequent papers.
In these sections, we heavily rely on the ideas of Bourgain and Gamburd but there are
a few new ingredients, most notably Lemma \ref{lemma:translation}.

We show in this section that if we convolve the distribution of the random
walk with itself (that is, we double the number of steps), then we obtain
a measure with better non-concentration properties.
In the next section, we iterate this and obtain nearly optimal non-concentration bounds
and use them to deduce the bounds on $\|S_r\|$ claimed in Theorem \ref{theorem:technical}.

To formalize this, we introduce some notation.
Let $1>\d>0$ be a number and $l\ge 1$ an integer.
We associate a neighborhood of $1\in\Isom(\R^d)$
to these parameters:
\[
B_{\d,l}:=\{(v,\t)\in\Isom(\R^d):\dist(\t,1)\le\d\;{\rm and}\;|v|\le\d\cdot l^{1/2}\}.
\]
To obtain an approximation at scale $\d$ with $L^2$ density,
we will convolve the random walk with the function
\[
P_{\d,l}(g):=\left\{
\begin{array}{cl}
\frac{1}{m(B_{\d,l})}& {\rm if\;} g\in B_{\d,l}\\
0&{\rm otherwise.}
\end{array}
\right.
\]

This section is devoted to the proof of the following proposition.

\begin{prp}\label{proposition:flattening2}
For any integer $d\ge3$ and $a>0$, there are $\a,C_0$ such that the following holds.
Let $\mu$ be a probability measure on $\Isom(\R^d)$ satisfying
\eqref{equation:symmetric}--\eqref{equation:moment3}.
Fix a number $1/2>\d>0$ and let $l_1\ge C_0M^2\log \d^{-1}$ be an integer.
Let $\eta=P_{\d,l_1}*\mu^{*(l_1)}*P_{\d,l_1}$.
Then for each integer $k\ge1$ we have either
\be\label{equation:flatten}
\|\eta^{*(2k)}\|_2\le \d^{\a}\cdot\|\eta^{*(k)}\|_2
\ee
or
\be\label{equation:flat}
\|\eta^{*(k)}\|_2\le C_0\d^{-a} l_1^{-d/4}.
\ee
\end{prp}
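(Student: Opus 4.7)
The strategy is the Bourgain--Gamburd flattening dichotomy: I would argue by contradiction, assuming both \eqref{equation:flatten} and \eqref{equation:flat} fail, and derive a contradiction with the non-concentration estimate of Proposition~\ref{proposition:decay}. The failure of \eqref{equation:flatten} says that the $L^2$ mass of $\eta^{*(k)}$ does not drop upon self-convolution, which will force an approximate-group structure at scale $\delta$; the failure of \eqref{equation:flat} provides the quantitative lower bound on $\|\eta^{*(k)}\|_2$ that this structure must support.

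The first step is a dyadic decomposition of $\eta^{*(k)}$ according to the level sets $A_j = \{g : \eta^{*(k)}(g) \in [2^{j-1}, 2^j)\}$. A pigeonhole argument extracts a dominant level $j_0$ on which $\mu := 2^{j_0}\mathbf{1}_{A_{j_0}}$ is a near-uniform approximation to $\eta^{*(k)}$ carrying a $(\log \delta^{-1})^{-O(1)}$ fraction of its $L^2$ mass. The failure of \eqref{equation:flatten} then translates, after this reduction, into a lower bound on the multiplicative energy of $A := A_{j_0}$ at scale $\delta$ which is within a factor $\delta^{-O(\alpha)}$ of its maximum possible value $|A|^3$.

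Next I would apply a noncommutative Balog--Szemer\'edi--Gowers--Tao argument at scale $\delta$ to extract a symmetric subset $A' \subset A$ with $\delta$-covering number $\gtrsim \delta^{O(\alpha)}|A|$ and such that the triple product $A' \cdot (A')^{-1} \cdot A'$ has $\delta$-covering number at most $\delta^{-O(\alpha)}|A'|$. Projecting $A'$ to $\SO(d)$ via $\tau$ yields a set to which Proposition~\ref{proposition:Saxce} applies, producing a ball of radius $\gtrsim m(\tau(A'))^C$ inside $\tau(A') \cdot \tau(A')^{-1} \cdot \tau(A')$; combined with the localization of the translation part provided by the $P_{\delta,l_1}$ smoothings, this lifts to a large product-set containment inside $\Isom(\R^d)$. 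Proposition~\ref{proposition:BG-largedim} together with the spectral gap assumption \eqref{equation:gap} would then be used to convert this structural information into a quantitative upper bound on the density $2^{j_0}$ of $\eta^{*(k)}$ on its dominant level, equivalently a lower bound on $|A|$.

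Finally I would confront the resulting picture with Proposition~\ref{proposition:decay}, applied to the random walk driven by $\mu$ at length $\sim k l_1 \geq L \log \delta^{-1}$: it bounds the mass $\eta^{*(k)}$ can place on any ball of radius comparable to $\delta$ in the translation direction, forcing the $\delta$-covering number of any candidate level set $A$ to be large. The clash between this lower bound and the upper bound on $|A' \cdot (A')^{-1} \cdot A'|$ from the previous step yields the desired contradiction, unless \eqref{equation:flat} already holds. The main obstacle will be the bookkeeping of the several scales and polynomial losses --- the $\delta$-scale of the smoothing, the $M^2 \log \delta^{-1}$ length needed to activate Proposition~\ref{proposition:decay}, and the Ruzsa-type losses in $\alpha$ from the BSG step --- so that the final exponent matches the $l_1^{-d/4}$ factor in \eqref{equation:flat}, which is dictated by the $l_1$-dependence of $\|P_{\delta,l_1}\|_2$.
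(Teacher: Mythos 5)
Your high-level strategy --- argue by contradiction, apply a noncommutative BSG step to extract a set $A$ of small tripling, project to $\SO(d)$ and use Proposition~\ref{proposition:Saxce} to find a ball in the rotation direction, and derive a contradiction using the non-concentration estimate --- matches the skeleton of the paper's proof, where the BSG extraction is packaged as Proposition~\ref{proposition:flattening} and the ball in $\SO(d)$ is Lemma~\ref{lemma:neighborhood}. However, there are two concrete gaps.

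First, your proposal is silent on the key technical step, which the paper singles out as its main new ingredient: once one knows that $\tau(\prod_{6} A)$ fills a ball in $\SO(d)$ (which follows from the spectral gap and Saxc\'e's theorem), one must still show that $A$ is not trapped near a single coset of $\SO(d)$, i.e.\ that some bounded product of $A$ contains a \emph{long pure translation} (on the order of $l_1^{1/2}$, not $\delta l_1^{1/2}$). Your sentence about the $P_{\delta,l_1}$ smoothings ``localizing the translation part'' cannot substitute for this: the smoothings only guarantee spread at scale $\delta l_1^{1/2}$, and the contradiction with $m(\prod_{58}A)\le CK^C\delta^{2a}l_1^{d/2}$ requires a ball of radius $\sim l_1^{1/2}$ in the $\R^d$-direction, not $\sim\delta l_1^{1/2}$. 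The paper's Lemma~\ref{lemma:translation} achieves this by a specific conjugation construction: for $g\in A$ and $\sigma$ in the $\SO(d)$-ball one forms the commutator-like element $gF(\sigma)g^{-1}F(\tau(g)\sigma\tau(g)^{-1})^{-1}$, which is a pure translation, and averaging over $\sigma$ exhibits this translation as proportional to $g(x_0)-x_0$ for an explicit point $x_0$. It is \emph{then} that Proposition~\ref{proposition:decay} enters, via Lemma~\ref{lemma:decay}, to say that the random walk cannot place $\gg\delta^{O(a)}$ mass on the ball where this translation would be short. Your final paragraph instead claims that Proposition~\ref{proposition:decay} directly ``forces the $\delta$-covering number of any candidate level set $A$ to be large,'' but Proposition~\ref{proposition:decay} controls concentration of $\nu_l$ on $\R^d$-balls, not Haar measure of level sets in $\Isom(\R^d)$; without the conjugation construction there is no bridge between the two.

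Second, you invoke Proposition~\ref{proposition:BG-largedim} as the tool that ``converts the structural information into a quantitative upper bound on the density $2^{j_0}$.'' That proposition plays no role in the proof of Proposition~\ref{proposition:flattening2}; it is used later, in Section~\ref{section:bourgaingamburd2}, to convert the output of the flattening argument into a bound on $\|S_r\|$. In the flattening step the spectral gap~\eqref{equation:gap} is used directly (in Lemma~\ref{lemma:neighborhood}) to lower-bound $m(\tau(A))$, after which Saxc\'e's Proposition~\ref{proposition:Saxce} produces the ball. So the overall route you propose is recognizably the paper's, but the argument as written is missing the commutator/translation extraction that actually closes the loop, and misattributes where the spectral-gap input goes.
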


Upon iterating the proposition, we obtain
\[
\|(P_{\d,l_1}*\mu^{*(l_1)}*P_{\d,l_1})^{*(2^{k+1})}\|_\infty
\le\|(P_{\d,l_1}*\mu^{*(l_1)}*P_{\d,l_1})^{*(2^k)}\|_2^2
\le C \d^{-2a} l_1^{-d/2}
\]
for an arbitrarily small number $a>0$ if $k$ is suitably large.
It is crucial that $k$, the number of iterations we need to take, is independent of $\d$.
One can interpret this inequality as a very strong non-concentration of the random walk
on balls of radius $\d$.
Alternatively, we can also say with the terminology of Section~\ref{section:compactgroups}
that $\mu^{*(2^kl_1)}$
is of ``large dimension" at scale $\d$.

For the proof of the proposition, we can assume that $\d<\d_0$ for any constant $\d_0$ depending
on $d$ and $\a$ only.
Indeed, \[\|P_{\d,l_1}\|_2\le C \d^{-\dim\Isom(\R^d)/2}l_1^{-d/4},\] hence conclusion \eqref{equation:flat} holds for all $k$
if $\d\ge\d_0$ and $C_0$ is sufficiently large.

\subsection{Flattening}
\label{section:flattening}

We recall a useful result related to  the Balog Szemer\'edi Gowers theorem.
Let $G$ be a unimodular second countable locally compact  Hausdorff 
topological group endowed with
a bi-invariant Haar measure $m$.
Let $f\in L^2(G)$ be the density of a probability measure and let $A\subset G$.
Then the Cauchy-Schwartz inequality implies that
\[
\|f*f\|_2\ge m(A.A)^{-1/2}\int_{A.A}f*f\igap dm\ge m(A.A)^{-1/2}\left(\int_{A}f\igap dm\right)^2.
\]
Suppose that for some number $K$, we have
\[
\int_{A}f\igap dm>1/K,\quad m(A.A)\le K m(A), \quad \|f\|_2^{-2}/K\le m(A)\le K \|f\|_2^{-2},
\]
that is $f$ is concentrated on a set of small doubling of size comparable to $\|f\|_2^{-2}$.
Then the above inequality implies that $\|f*f\|_2\ge K^{-3}\|f\|_2$ that is the $L^2$ norm
is not decreased by convolution.

Luckily, there is a converse to this observation, which can be stated informally as follows:
If the $L^2$ norm is not decreased by convolution, then the function must necessarily concentrate
on a set of small tripling.
The exact formulation is contained in the next proposition.
The reason why we are looking for sets of small tripling as opposed to doubling is that
the quantity $m(A.A.A)/m(A)$ can be used to control the size of product sets of more factors,
whereas $m(A.A)/m(A)$ is not sufficient in general in non-commutative groups.

\begin{prp}\label{proposition:flattening}
There is an absolute constant $C$ such that the following holds.
Let $f_1,f_2\in L^{2}(G)$ be densities of probability measures, that is
$f_1,f_2\ge0$ and $\int f_1=\int f_2=1$.
Suppose that $\|f_1\|_2\ge\|f_2\|_2$ and $\|f_1*f_2\|_2\ge\|f_1\|_2/K$ for some number $K\ge1$.
Then there is a symmetric $F_\s$ set $A\subset G$ such that the following hold
\begin{align}
C^{-1}K^{-C}\|f_1\|_2^{-2}&\le m(A)\le CK^{C}\|f_1\|_2^{-2}, \label{equation:sizeA}\\
C^{-1}K^{-C}\|f_1\|_2^2&\le\check{f_1}*f_1(x)\quad
\text{for all $x\in A$},\label{equation:fofA}\\
m(A.A.A)&\le CK^Cm(A).
\end{align}
\end{prp}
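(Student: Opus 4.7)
Proposition~\ref{proposition:flattening} is a non-commutative Balog--Szemer\'edi--Gowers (BSG) statement adapted to $L^2$ densities of probability measures: the hypothesis $\|f_1*f_2\|_2 \ge \|f_1\|_2/K$ says that convolution barely decreases the $L^2$ norm, and the conclusion extracts a set $A$ of small tripling on which $\check{f_1}*f_1$ is large. I plan to deduce it from the set-theoretic non-commutative BSG theorem of Tao (\emph{Product set estimates for non-commutative groups}) via a dyadic level-set pigeonhole.

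\textbf{Reduction to indicators.} First I would truncate $f_i$ to the region $\{x : \|f_i\|_2^2/M \le f_i(x) \le M\|f_i\|_2^2\}$ with $M = K^{C_1}$: by Chebyshev the tails contribute $O(1/M)$ to $\|f_i\|_2^2$, and Young's inequality $\|f_1*f_2\|_2 \le \|f_1\|_2\|f_2\|_1$ guarantees that the hypothesis survives this truncation up to a $K^{O(1)}$ factor. The truncated $f_i$ has only $O(\log K)$ dyadic levels $A_{i,j} := \{x : 2^j \le f_i(x) < 2^{j+1}\}$, and expanding
\[
\|f_1*f_2\|_2^2 = \sum_{j_1,j_2,k_1,k_2} 2^{j_1+j_2+k_1+k_2}\int(1_{A_{1,j_1}}*1_{A_{2,j_2}})(1_{A_{1,k_1}}*1_{A_{2,k_2}})\,dm
\]
and pigeonholing the quadruple of indices costs only a polylogarithmic factor in $K$, absorbed into $K^{O(1)}$. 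Hence I may replace $f_i$ by $\lambda_i 1_{A_i}$ with $\lambda_i \sim \|f_i\|_2^2$ and $m(A_i) \sim \|f_i\|_2^{-2}$ up to $K^{O(1)}$ (using $\lambda_i m(A_i) \le \int f_i = 1$ and $\lambda_i^2 m(A_i) \lesssim \|f_i\|_2^2$), and the hypothesis becomes the multiplicative energy bound
\[
E(A_1,A_2) := \|1_{A_1}*1_{A_2}\|_2^2 \ge m(A_1)\,m(A_2)\,\min(m(A_1),m(A_2))/K^{O(1)}.
\]

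\textbf{Applying non-commutative BSG.} This is the standard input for Tao's non-commutative BSG theorem, whose continuous locally compact version (either argued directly in the measurable category, or obtained via a Vitali-cover discretization followed by transfer) produces a measurable $A' \subset A_1$ with $m(A') \ge m(A_1)/K^{O(1)}$ and tripling $m(A'\cdot A'^{-1}\cdot A') \le K^{O(1)}m(A')$; moreover, the BSG construction ensures that for every $x \in A'^{-1}A'$ the set $\{u \in A' : ux \in A'\}$ has measure $\ge m(A')/K^{O(1)}$. Setting $A := A'^{-1}A'$ gives a symmetric set (since $A^{-1} = A'^{-1}(A'^{-1})^{-1} = A$) with $m(A) \sim m(A') \sim \|f_1\|_2^{-2}$ and $m(A\cdot A\cdot A) \le K^{O(1)}m(A)$ by the non-commutative Ruzsa--Pl\"unnecke--Tao triangle inequalities. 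Inner regularity of Haar measure on the second countable LCH group $G$ lets me further replace $A$ by an $F_\sigma$ subset of the same measure up to an arbitrarily small loss. Finally, for $x \in A$ the identity $(\check{f_1}*f_1)(x) = \int f_1(u)f_1(ux)\,dm(u)$ combined with $f_1 \ge \lambda_1\cdot 1_{A'}$ gives
\[
(\check{f_1}*f_1)(x) \ge \lambda_1^2\cdot m\bigl(\{u \in A' : ux \in A'\}\bigr) \ge \lambda_1^2 m(A')/K^{O(1)} \gtrsim \|f_1\|_2^2/K^{O(1)},
\]
which is the required pointwise lower bound.

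\textbf{Main obstacle.} The hardest step is the rigorous transfer of non-commutative BSG from its classical finite-group form to a general unimodular second countable LCH group with Haar measure: one either argues directly in the measurable category, or discretizes via a Vitali cover, proves the finite version and pulls back, both of which require care to keep the resulting exponent $C$ absolute. A secondary subtlety is that $\check{f_1}*f_1$ is not naturally symmetric in a non-abelian group, so one must choose the correct symmetrization $A = A'^{-1}A'$ (rather than $A = A'A'^{-1}$), matching the direction of the convolution $(\check{f_1}*f_1)(x) = \int f_1(u)f_1(ux)\,dm(u)$; without this care the pointwise lower bound would not follow from the BSG popular-representations property.
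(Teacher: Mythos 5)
The paper does not actually prove this proposition: it only remarks that the statement is an application of Balog--Szemer\'edi--Gowers, points to a discrete analogue (Lemma~15 of \cite{Var-squarefree}), and defers the continuous proof to a forthcoming book. Your outline --- dyadic level-set pigeonholing to reduce to indicator functions, a non-commutative BSG theorem, and Pl\"unnecke--Ruzsa--Tao product-set machinery --- is exactly the family of arguments the paper is gesturing at, and your attention to the orientation of the symmetrization (so that $\check{f_1}*f_1(x)=\int f_1(u)f_1(ux)\,dm(u)$ matches the quotient set $A'^{-1}A'$ rather than $A'A'^{-1}$) is correct and necessary.

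One step is, however, wrong as stated: neither small tripling of $A'$ nor the standard output of non-commutative BSG guarantees that \emph{every} $x\in A'^{-1}A'$ satisfies $m(\{u\in A': ux\in A'\})\ge m(A')/K^{O(1)}$. Already in $\Z$, if $A'$ is an arithmetic progression of length $n$ together with one faraway point $N$, then $|A'-A'|\le 3|A'|$ but $N\in A'-A'$ has a single representation, so taking $A:=A'^{-1}A'$ does not yield the pointwise bound \eqref{equation:fofA}. The standard repair is to take instead the popular subset
\[
P:=\bigl\{x\in G : (1_{A'^{-1}}*1_{A'})(x)>m(A')/(2L)\bigr\},\qquad L\le K^{O(1)},\ \ m(A'^{-1}A')\le L\,m(A');
\]
an averaging argument gives $m(P)\ge m(A')/2$, $P$ is symmetric because $1_{A'^{-1}}*1_{A'}$ is invariant under $x\mapsto x^{-1}$ on a unimodular group, $P$ is open (hence $F_\s$) because $1_{A'^{-1}}*1_{A'}$ is continuous as a convolution of two $L^2$ functions, and $P\subset A'^{-1}A'$ transfers the tripling bound via the non-commutative Pl\"unnecke--Ruzsa inequality. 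Replacing $A'^{-1}A'$ by $P$ closes the gap. A smaller point: to justify the upper truncation of $f_1$ you need Young's inequality in the $L^1\times L^2\to L^2$ form $\|(f_1 1_{\{f_1>T\}})*f_2\|_2\le\|f_1 1_{\{f_1>T\}}\|_1\,\|f_2\|_2$, not the $\|f_1\|_2\|f_2\|_1$ bound you quote.
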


The idea of this proposition goes back to the papers \cite{BG-SUd}, \cite{BG-prime} and it is an application
of the Balog Szemer\'edi Gowers theorem.
A discrete version of the present formulation can be found in \cite{Var-squarefree}*{Lemma 15}.
The proof given there can be adapted to the continuous setting in a straightforward manner.
The proof in the continuous setting is given in the forthcoming book \cite{LV-book}.

\subsection{Non-concentration on sets of small tripling}
\label{section:tripling}

We prove in this section a non-concentration estimate on sets of small tripling
and use Proposition \ref{proposition:flattening} to prove Proposition \ref{proposition:flattening2}.
The key properties used are the spectral gap of the projection to $\SO(d)$ and the fact
established in Sections \ref{section:preliminary} and \ref{section:decay}
that the random walk does not concentrate on a subgroup isomorphic to $\SO(d)$.

We suppose that the assumptions in
Proposition \ref{proposition:flattening2} hold for some
$a,d,\mu,l,\d$ and yet both conclusions \eqref{equation:flatten} and \eqref{equation:flat}
fail with certain numbers $\a,C_0$.
We derive a contradiction, if $\a$ is sufficiently small and $C_0$ is sufficiently large depending
only on $d$ and $a$.
The letters $c,C$ appearing below until the end of the section denote positive numbers that
depend on $d$ only, in particular they are independent of $a, \a$ and $\d$.
We will prove an inequality, which can not hold if $\a$ is chosen sufficiently small depending on
$d, a$ and the quantities denoted by $c,C$.
The argument will be valid if $\d$ is sufficiently small depending on $a,d,\a$ and the quantities denoted by $c,C$.
After we specified the values of all other parameters,
we set $C_0$ in the statement of Proposition \ref{proposition:flattening2}
to ensure that it is vacuous when $\d$ is not sufficiently small.

In what follows, we denote by $m$ the Haar measures on both $\Isom(\R^d)$ and $\SO(d)$.
On the first group we take an arbitrary normalization, on the second one we take it to be
a probability measure.

We apply Proposition \ref{proposition:flattening} for $K=\d^{-\a}$ and $f_1=f_2=\eta^{*(k)}$.
Then we get an $F_\s$-set $A\subset \Isom(\R^d)$ such that
\be\label{equation:A1}
m(A.A.A)\le C K^C m(A).
\ee

Moreover, combining equations \eqref{equation:sizeA} and \eqref{equation:fofA},
we get that
\[
\check\eta^{*(k)}*\eta^{*(k)}(x)\ge cK^{-C} m(A)^{-1}
\]
for $x\in A$.
We integrate this on $A$ and get
\be\label{equation:muA}
\int_A\check\eta^{*(k)}*\eta^{*(k)}(x)\igap dm(x)\ge c K^{-C}.
\ee

Finally, we add that \eqref{equation:sizeA} and the failure of \eqref{equation:flat},
e.g.\ for $C_0=1$,
yield
\[
m(A)\le CK^C\d^{2a}l_1^{d/2}.
\]
By \cite{Tao-noncommutative}*{Lemma 3.4} we then have
\be\label{equation:A58}
m(\textstyle\prod_{58} A)\le CK^C\d^{2a}l_1^{d/2}.
\ee
In order to get a contradiction, we proceed by a series of Lemmata giving
more and more information about larger and larger product sets of $A$.

\begin{lem}\label{lemma:neighborhood}
Let $A$ satisfy \eqref{equation:A1}--\eqref{equation:A58}.
Then the set $\t(\prod_6 A)$ contains an open ball of radius at least $cK^{-C}$
around $1\in \SO(d)$.
\end{lem}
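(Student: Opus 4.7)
My strategy is to apply Saxc\'e's product theorem (Proposition~\ref{proposition:Saxce}) in the connected semisimple compact Lie group $\SO(d)$ (recall $d\ge 3$) to the symmetric set $\t(A)\subseteq\SO(d)$, and then to use the symmetry of $A$ to move the resulting open ball so that it is centered at $1$. This relocation step doubles the number of factors of $A$, which is precisely why the statement features $\prod_6 A$ rather than $\prod_3 A$.

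The main work is the lower bound $m_{\SO(d)}(\t(A))\gtrsim K^{-C}$. Since $\mu$ (by~\eqref{equation:symmetric}) and $P_{\d,l_1}$ are both symmetric, so is $\eta$, whence $\check\eta^{*(k)}*\eta^{*(k)}=\eta^{*(2k)}$. Pushing~\eqref{equation:muA} forward under the homomorphism $\t$ via $A\subseteq\t^{-1}(\t(A))$ yields
\[
cK^{-C}\le\eta^{*(2k)}(A)\le\bar\eta_{2k}(\t(A)),
\]
where I set $\bar\eta_j:=\t_*(\eta^{*(j)})=(\t_*\eta)^{*(j)}$. Writing $\bar\eta_{2k}=1+h$ with $h\in L_0^2(\SO(d))$ and using $\bar\eta_{2k}=T_{\bar\eta_1}^{2k-1}\bar\eta_1$, one has $h=T_{\bar\eta_1}^{2k-1}(\bar\eta_1-1)$. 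The spectral gap hypothesis~\eqref{equation:gap} transfers to $T_{\bar\eta_1}$ because $\bar\eta_1=\t_*P_{\d,l_1}*\t_*\mu^{*(l_1)}*\t_*P_{\d,l_1}$: the two outer factors convolve by probability densities and so contract $L^2_0$, while the middle factor contributes $\|T^{l_1}\|_{L_0^2}\le 2^{-l_1}$. Combined with the crude Young bound $\|\bar\eta_1\|_2\le\|\t_*P_{\d,l_1}\|_2\lesssim\d^{-d(d-1)/4}$, this gives
\[
\|h\|_2\le 2^{-(2k-1)l_1}\|\bar\eta_1-1\|_2\lesssim 2^{-(2k-1)l_1}\d^{-d(d-1)/4},
\]
which is $\le 1$ once $l_1\gtrsim\log\d^{-1}$; the hypothesis $l_1\ge C_0M^2\log\d^{-1}$ ensures this when $C_0$ is chosen large enough in terms of $d$. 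Cauchy--Schwarz on $\bar\eta_{2k}(\t(A))=m_{\SO(d)}(\t(A))+\int_{\t(A)}h\,dm$ then delivers $m_{\SO(d)}(\t(A))\ge cK^{-C'}$.

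With this lower bound in hand, Proposition~\ref{proposition:Saxce} applied to $A_1=A_2=A_3=\t(A)$ guarantees that $\t(A)^3=\t(\prod_3 A)$ contains an open ball of radius $\rho\ge cK^{-C}$ centered at some $g_0\in\SO(d)$. Since $A$ is symmetric, so is $\prod_3 A$, and hence $\t(\prod_3 A)$ is symmetric in $\SO(d)$ and also contains the ball of radius $\rho$ around $g_0^{-1}$. By bi-invariance of the metric on $\SO(d)$, any $y$ with $d(y,1)<\rho$ factors as $y=g_0\cdot(g_0^{-1}y)$ where the second factor has distance $d(g_0^{-1}y,g_0^{-1})=d(y,1)<\rho$ from $g_0^{-1}$; hence the product of the two balls contains the ball of radius $\rho$ around $1$. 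Therefore
\[
\t(\textstyle\prod_6 A)=\t(\textstyle\prod_3 A)\cdot\t(\textstyle\prod_3 A)
\]
contains an open ball of radius $\rho\ge cK^{-C}$ around $1\in\SO(d)$, as required.

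The main obstacle is the spectral gap step: one must convert the $L^2_0$ spectral gap of $T$ into genuine $L^2$ closeness of $\bar\eta_{2k}$ to Haar measure despite $\bar\eta_1$ itself having $L^2$ norm polynomially large in $\d^{-1}$. The exponential decay $2^{-(2k-1)l_1}$ must absorb this polynomial blow-up, which is exactly the role of the hypothesis $l_1\gtrsim\log\d^{-1}$.
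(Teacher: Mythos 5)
Your proof is correct and follows essentially the same route as the paper's: you lower-bound $m(\t(A))$ by showing that the $\SO(d)$-projection of $\eta^{*(2k)}$ has density $L^2$-close to~$1$ (using the spectral gap of $T$ via the middle factor $\t_*\mu^{*(l_1)}$ to absorb the $\d^{-C}$ blow-up from $\t_*P_{\d,l_1}$, which is exactly why $l_1\ge C_0 M^2\log\d^{-1}$ is assumed), then feed that into Cauchy--Schwarz, Saxc\'e's product theorem, and the symmetry of $A$ to center the ball at $1$. Your reorganization of the $L^2$ estimate — writing $\bar\eta_{2k}=1+h$ with $h=T_{\bar\eta_1}^{2k-1}(\bar\eta_1-1)$ and bounding $\|h\|_2$ directly, instead of the paper's device of peeling off the probability-measure prefactor and bounding $\|F_{l_1}\|_2\le 2$ — is cosmetically different but mathematically equivalent.
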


\begin{proof}
Recall the operator $T$ acting on $L^2(\SO(d))$ by
\[
Tf(\s):=\int f(\t(g)^{-1}\s)\igap d\mu(g).
\]
By assumption \eqref{equation:gap},
\[
\|Tf\|_2\le \frac{1}{2}\|f\|_2
\]
for every function $f\in L^2(\SO(d))$ that satisfy $\int f\igap dm=0$.

Consider the functions $F_j\in L^2(\SO(d))$ for integers $j\ge0$
given by
\[
F_j(\t):=\int_{\R^d}\mu^{*(j)}*P_{\d,l_1}(v,\t)\igap dm(v).
\]
i.e.~$F_j$ is the density of the measure obtained by projecting
$\mu^{*(j)}*P_{\d,l_1}(v,\t)$ to $SO(d)$.
Observe that $F_j=T^jF_0$.
We can write $F_0=1+F'$ such that $\int F'(g)\igap dm(g)=0$ and $\|F'\|_2\le C\d^{-C}$,
where $C>0$ is a number depending only on $d$.
Recall that $l_1\ge C_0M^2\log(\d^{-1})$, where $C_0$ can be chosen suitably large depending on
$d$, and hence
\[
\|F_{l_1}\|_2\le1+\frac{1}{2^{l_1}}\cdot  C\d^{-C}\le2.
\]

This in turn yields
\[
\left\|\left(\int_{\R^d}\check\eta^{*(k)}*\eta^{*(k-1)}*P_{\d,l_1}(v,\t)\igap dm(v)\right)*F_{l_1}\right\|_2\le2
\]
hence\[
\int_{\SO(d)}\left[\int_{\R^d}\check\eta^{*(k)}*\eta^{*(k)}(v,\t)
\igap dm(v)\right]^2\igap dm(\s)
\le 4.
\]

By the Cauchy-Schwartz inequality:
\begin{align*}
&\int_{\t(A)}\left[\int_{\R^d}\check\eta^{*(k)}*\eta^{*(k)}(v,\t)\igap dm(v)\right]\igap dm(\s)\\
&\quad{}\le (m(\t(A)))^{1/2}\cdot
\left(\int_{\t(A)}\left[\int_{\R^d}\check\eta^{*(k)}*\eta^{*(k)}(v,\t)\igap dm(v)\right]^2
\igap dm(\s)\right)^{1/2}\\
&\quad{}\le 2 (m(\t(A)))^{1/2}.
\end{align*}
On the other hand
\begin{align*}
&\int_{\t(A)}\left[\int_{\R^d}\check\eta^{*(k)}*\eta^{*(k)}(v,\t)\igap dm(v)\right]\igap dm(\s)\\
&\qquad{}\ge \int_{A}\check\eta^{*(k)}*\eta^{*(k)}(g)\igap dm(g)\ge cK^{-C}.
\end{align*}
Combining the last two inequalities, we get
\[
m(\t(A))\ge c K^{-C}.
\]

By Proposition \ref{proposition:Saxce}, $\t(A\cdot A\cdot A)$ contains a ball of radius $cK^{-C}$.
Since $A$ is symmetric, $\t(\prod_6A)$ must contain such a ball centered at $1$.
\end{proof}

\begin{lem}\label{lemma:translation}
Let $A$ satisfy \eqref{equation:A1}--\eqref{equation:A58}.
Then the set $\prod_{14}A$
contains a pure translation of length at least $cK^{-C}\d^{a/d}l_1^{1/2}$.
\end{lem}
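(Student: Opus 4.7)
The plan is to argue by contradiction. Suppose every pure translation in $\prod_{14}A$ has norm at most $r:=cK^{-C}\d^{a/d}l_1^{1/2}$ for constants $c,C$ to be specified. For any $g_1,g_2\in A$ with the same rotation $\t(g_1)=\t(g_2)$, the product $g_1g_2^{-1}=(v(g_1)-v(g_2),1)$ lies in $A\cdot A\subset\prod_{14}A$ and is a pure translation, forcing $|v(g_1)-v(g_2)|\le r$. Hence every fiber $F_\s:=\{v\in\R^d:(v,\s)\in A\}$ has diameter at most $r$, and we pick a measurable section $v_*\colon\t(A)\to\R^d$ with $v_*(\s)\in F_\s$.

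The central step is to exhibit a point $y_0\in\R^d$ such that $v_*(\s)=(1-\s)y_0+O(r)$ uniformly in $\s\in\t(A)$; equivalently, $A$ is contained in an $O(r)$-neighborhood of the stabilizer of $y_0$ in $\Isom(\R^d)$. For $g_1,g_2,g_3\in A$ with $\t(g_1)\t(g_2)=\t(g_3)$, the element $g_1g_2g_3^{-1}\in\prod_3 A\subset\prod_{14}A$ is a pure translation of norm at most $r$, giving the approximate $1$-cocycle relation
\[
v_*(\s_1)+\s_1 v_*(\s_2)-v_*(\s_1\s_2)=O(r).
\]
Since the defining representation of $\SO(d)$ on $\R^d$ has no invariant vectors, a quantitative form of $H^1(\SO(d),\R^d)=0$ yields $y_0$: after enlarging to work inside $\prod_6 A$, whose $\t$-image contains a neighborhood of $1$ by Lemma~\ref{lemma:neighborhood}, averaging the coboundary formula against normalized Haar measure on that neighborhood produces $y_0$ with the required error.

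With such $y_0$, every $g=(v,\s)\in A$ satisfies $|g(y_0)-y_0|\le|v-v_*(\s)|+|v_*(\s)-(1-\s)y_0|=O(r)$, so $A\subset\{g:|g(y_0)-y_0|\le r'\}$ with $r'=O(r)$. By~\eqref{equation:muA} and the change of variables $g=g_1^{-1}g_2$ (with $g_1,g_2$ independent of law $\eta^{*(k)}$),
\[
cK^{-C}\le\mu'(A)\le\int\P\big(g_2(y_0)\in B(g_1(y_0),r')\big)\,d\eta^{*(k)}(g_1),
\]
where $\mu':=\check\eta^{*(k)}*\eta^{*(k)}$. Since $\eta^{*(k)}$ is a smoothing of $\mu^{*(kl_1)}$, Proposition~\ref{proposition:decay} applied with $l=kl_1$ and radius comparable to $r'$ bounds the inner probability by $Cr^{(d-1)/(2(d+1))}$, uniformly in $g_1$. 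Substituting $r=cK^{-C}\d^{a/d}l_1^{1/2}$ and $K=\d^{-\a}$, the resulting inequality fails for $\d$ small provided $\a$ is chosen small enough in terms of $a$ and $d$, which is the desired contradiction. The main obstacle is the cocycle step: passing from an approximate cocycle defined only on $\t(A)$ (which need not be a subgroup) to a genuine coboundary with uniform $O(r)$ error.
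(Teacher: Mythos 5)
Your proposal identifies the right geometric picture---construct a candidate fixed point $y_0$ by averaging, argue that if $\prod_{14}A$ contains no long pure translation then the measure must concentrate near the stabilizer of $y_0$, and contradict this with Proposition~\ref{proposition:decay} and~\eqref{equation:muA}. This is the same skeleton as the paper's proof, and your ``main obstacle'' remark correctly locates where the argument is incomplete.

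The gap is real and not merely a detail. Your cocycle relation
$v_*(\s_1)+\s_1 v_*(\s_2)-v_*(\s_1\s_2)=O(r)$
only holds when all three of $\s_1,\s_2,\s_1\s_2$ lie in $\t(A)$ (or in $\t(\prod_6 A)$ after enlarging), and $\t(A)$ is in no sense a subgroup; moreover, if you push the arguments into $\prod_6 A$ to have a full neighborhood of $1$, the element $g_1g_2g_3^{-1}$ lands in $\prod_{18}A$, outside the range where the contradiction hypothesis bounds pure translations. A local quantitative $H^1$-vanishing statement that works on a non-group subset of $\SO(d)$ with uniform $O(r)$ error is not standard, and you would need to prove it. In other words, your plan requires the strong conclusion that \emph{all} of $A$ lies in an $O(r)$-neighborhood of $\operatorname{stab}(y_0)$, and the approximate-cocycle route to that conclusion is unresolved.

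The paper sidesteps exactly this issue by only requiring the weaker statement that \emph{one} $g\in A$ displaces $y_0$. Concretely, with $\Theta\subset\t(\prod_6A)$ a conjugation-invariant neighborhood of $1$ (from Lemma~\ref{lemma:neighborhood}) and $F:\Theta\to\prod_6A$ a measurable section, the element $gF(\s)g^{-1}F(\t(g)\s\t(g)^{-1})^{-1}\in\prod_{14}A$ is an \emph{exact} pure translation for every $g\in A$, $\s\in\Theta$ (no approximate identity, no error term). Averaging its translation vector over $\s$, and using conjugation invariance of $\Theta$ together with the scalar identity $\int_\Theta\s v\,dm_\Theta(\s)=b\cdot v$, one computes the average to be $(1-b)\bigl[g(y_0)-y_0\bigr]$ with $y_0=(1-b)^{-1}\int_\Theta v(F(\s))\,dm_\Theta(\s)$. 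So the candidate $y_0$ appears automatically, without any coboundary argument. Then Lemma~\ref{lemma:decay} (a direct consequence of Proposition~\ref{proposition:decay}) plus~\eqref{equation:muA} guarantee a single $g\in A$ with $|g(y_0)-y_0|$ large, and hence a single $\s\in\Theta$ producing the desired long pure translation. If you wish to salvage your argument, replace the goal ``$A$ lies near $\operatorname{stab}(y_0)$'' with the goal ``some $g\in A$ displaces $y_0$'' and you will essentially be forced into the paper's construction.
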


This lemma depends on the results of Sections \ref{section:preliminary} and \ref{section:decay}.
The information we need is contained in the next lemma.

\begin{lem}\label{lemma:decay}
Let $x_0,y_0\in\R^d$ be two points, $\d<s<1/4$ 
and denote by $\Omega\subset\Isom(\R^d)$ the set of isometries $g$
which satisfy
\[
g(x_0)\in B(l_1^{1/2}s,y_0).
\]
Then
\be\label{equation:decay}
\int_\Omega \check\eta^{*(k)}*\eta^{*(k)}(g) \igap dm(g)\le Cs^{\frac{d-1}{4(d+1)}}.
\ee
\end{lem}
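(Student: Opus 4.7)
The plan is to use symmetry of $\eta$, expose a single block of $l_1$ independent $\mu$-steps inside $\eta^{*(2k)}$, and apply Proposition~\ref{proposition:decay} to that block with parameters carefully tuned to the ball radius $l_1^{1/2}s$.

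First, I would use the fact that $\mu$ is symmetric by \eqref{equation:symmetric} and that $B_{\d,l_1}$ is preserved under $g\mapsto g^{-1}$, so both $\mu$ and $P_{\d,l_1}$ are symmetric, hence $\check\eta=\eta$. Thus the integral equals $\eta^{*(2k)}(\Omega)=\P(W(x_0)\in B(l_1^{1/2}s,y_0))$ for $W\sim\eta^{*(2k)}$. Peeling off the last $\eta$ in $\eta^{*(2k)}=\eta^{*(2k-1)}*P_{\d,l_1}*\mu^{*(l_1)}*P_{\d,l_1}$, write $W=A\cdot V\cdot B$ with independent factors $A\sim\eta^{*(2k-1)}*P_{\d,l_1}$, $V\sim\mu^{*(l_1)}$ and $B\sim P_{\d,l_1}$. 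Because $A$ is an isometry, the event $W(x_0)\in B(l_1^{1/2}s,y_0)$ is equivalent to $V(B(x_0))\in B(l_1^{1/2}s,A^{-1}(y_0))$, and the conditional probability of this event given $(A,B)$ is exactly $\bigl(\mu^{*(l_1)}.\d_{B(x_0)}\bigr)\bigl(B(l_1^{1/2}s,A^{-1}(y_0))\bigr)$.

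The main obstacle is applying Proposition~\ref{proposition:decay} effectively. The naive choice $L=l_1$, $r=s$ demands $l_1\ge l_1\log(s^{-1})$, forcing $s\ge e^{-1}$, which fails throughout our range $\d<s<1/4$. Instead I would take
\[
L:=\frac{l_1}{\log(s^{-1})},\qquad r:=s\sqrt{\log(s^{-1})},
\]
so that $L^{1/2}r=l_1^{1/2}s$ matches the ball radius exactly. All hypotheses are then easily verified: $L\ge CM^2$ follows from $l_1\ge C_0M^2\log(\d^{-1})\ge C_0M^2\log(s^{-1})$ for $C_0$ large; the condition $l_1\ge L\log(r^{-1})$ reduces to $r\ge s$, i.e.\ $\log(s^{-1})\ge 1$, which holds since $s<1/4$; and $r\le 1/2$ follows because $s\mapsto s\sqrt{\log s^{-1}}$ is increasing on $(0,e^{-1/2})$ and takes value below $0.3$ at $s=1/4$. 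Proposition~\ref{proposition:decay} then yields the conditional estimate $\le C(s\sqrt{\log s^{-1}})^{(d-1)/(2(d+1))}$, uniformly in $(A,B)$.

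The target exponent $(d-1)/(4(d+1))$ is half of what Proposition~\ref{proposition:decay} gives directly; this factor of $2$ is the price of the trick and is absorbed via the elementary bound $s\log(s^{-1})\le 1/e$ for $s\in(0,1)$, which rewrites as $s\sqrt{\log s^{-1}}\le e^{-1/2}\sqrt s$ and upgrades the previous estimate to $Cs^{(d-1)/(4(d+1))}$. Averaging over $(A,B)$ then gives \eqref{equation:decay}. The whole argument is uniform in $k$: a single $\mu^{*(l_1)}$ block suffices, precisely because $l_1\gtrsim M^2\log(\d^{-1})\ge M^2\log(s^{-1})$ already provides enough ``length'' for the non-concentration estimate after shifting $L$ and $r$ by a logarithmic factor.
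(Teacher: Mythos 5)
Your proof is correct and takes essentially the same route as the paper: peel a single $\mu^{*(l_1)}$ factor out of $\check\eta^{*(k)}*\eta^{*(k)}$, reduce to $\max_{x,y}\P[X_2 x\in B(l_1^{1/2}s,y)]$ for $X_2\sim\mu^{*(l_1)}$, and invoke Proposition~\ref{proposition:decay} with $r=s\sqrt{\log s^{-1}}$. The only cosmetic difference is your choice $L=l_1/\log(s^{-1})$ (matching $L^{1/2}r=l_1^{1/2}s$ exactly) versus the paper's $L=l_1/\log(r^{-1})$ (making the constraint $l\ge L\log r^{-1}$ trivial and then bounding $L^{1/2}r\ge l_1^{1/2}s$); both verify the same hypotheses of Proposition~\ref{proposition:decay} and land on the same exponent via $s\sqrt{\log s^{-1}}\le Cs^{1/2}$.
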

\begin{proof}
Let $X_1,X_2,X_3\in\Isom(\R^d)$ be independent random isometries with laws
$\check\eta^{*(k)}*\eta^{*(k-1)}*P_{\d,l_1}$,
$\mu^{*(l_1)}$, $P_{\d,l_1}$ respectively.
Then the quantity on the left hand side of \eqref{equation:decay}
is the probability of the event that
\[
X_1X_2X_3x_0\in B(l_1^{1/2}s, y_0).
\]
This is equivalent to
\[
X_2(X_3x_0)\in B(l_1^{1/2}s, X_1^{-1}y_0).
\]
The probability of this is bounded by
\[
\max_{x,y\in\R^d}\P[X_2x\in B(l_1^{1/2}s,y)].
\]

We apply  Proposition \ref{proposition:decay} with $r=(\log s^{-1})^{1/2}s$
and $L=l_1/\log(r^{-1})$.
We note that $s\le r\le s^{1/2}$ (as $s\le 1$) and 
\[
L\ge \frac{l_1}{\log \d^{-1}}\ge CM^2,
\]
where $C$ can be any number if $C_0$ in Proposition \ref{proposition:flattening2}
is sufficiently large.
In particular, we can ensure that $C$ is so large that  Proposition \ref{proposition:decay} holds.
We get from the proposition that
\[
\max_{x,y\in\R^d}\P[X_2x\in B(L^{1/2}r,y)]\le C r^\frac{d-1}{2(d+1)}
\le C s^\frac{d-1}{4(d+1)}.
\]
To finish the proof, we observe that
\[
L^{1/2}r=\left(\frac{l_1}{\log(r^{-1})}\right)^{1/2}\cdot(\log s^{-1})^{1/2}s\ge l_1^{1/2}s.
\]
\end{proof}

\begin{proof}[Proof of Lemma \ref{lemma:translation}]
Denote by $\Theta\subset\SO(d)$ the $cK^{-C}$ neighborhood
of $1\in\SO(d)$.
By Lemma \ref{lemma:neighborhood}, we have $\Theta\subset\t(\prod_6A)$, hence
there is a measurable function $F:\Theta\to\prod_6A$ such that
$\t(F(\s))=\s$ for all $\s\in\Theta$.
Note that the set $\Theta$ is invariant under conjugation.

We look at isometries of the form
\[
gF(\s)g^{-1}F(\t(g)\s\t(g)^{-1})^{-1}
\]
for $g\in A$ and $\s\in\Theta$.
These are all pure translations, and we will see that their lengths
are not small for typical choices of $g$ and $\s$.

Denote by $m_\Theta$ the restriction of the Haar measure of $\SO(d)$
to $\Theta$ normalized to have total mass 1.
Write
\[
u_0=\int_\Theta v(F(\s))\igap dm_\Theta(\s).
\]

We choose an arbitrary $g\in A$ and recall that $v(g^{-1})=-\t(g)^{-1}v(g)$.
Then
\begin{align*}
\int_\Theta v(gF(\s)g^{-1}&F(\t(g)\s\t(g)^{-1})^{-1})\igap dm_\Theta(\s)\\
&=\int_\Theta v(g)+\t(g)v(F(\s))-\t(g)\s\t(g)^{-1}v(g)\\
&\qquad{}-v(F(\t(g)\s\t(g)^{-1}))\igap dm_\Theta(\s).
\end{align*}

A simple computation shows that there is a number $b$ such that
\[
\int_\Theta \s v\igap dm_\Theta(\s)=b\cdot v
\]
for any $v\in\R^d$.
Moreover, we have
\[
1-CK^{-C}\le b\le 1-cK^{-C}.
\]

Thus
\begin{align*}
&\int_\Theta v(gF(\s)g^{-1}F(\t(g)\s\t(g)^{-1})^{-1})\igap dm_\Theta(\s)\\
&\qquad{}=v(g)+\t(g)u_0-b\cdot v(g)-u_0\\
&\qquad{}=(1-b)[g((1-b)^{-1}\cdot u_0)-(1-b)^{-1}\cdot u_0].
\end{align*}

We apply Lemma  \ref{lemma:decay} for $x_0=y_0=(1-b)^{-1}\cdot u_0$
and $s=\d^{a/d}$.
We assume, as we may, that $\a$ and $\d$ are sufficiently small
(depending on $d,a$ and the $C$ below), so that
\[
\eqref{equation:muA}\ge Cs^{\frac{d-1}{4(d+1)}}.
\]
Then there is an isometry $g\in A$
such that 
\[
(1-b)|g((1-b)^{-1}\cdot u_0)-(1-b)^{-1}\cdot u_0|>cK^{-C}\d^{a/d}l_1^{1/2}.
\]
Hence there is $\s\in\Theta$ such that
\[
|v(gF(\s)g^{-1}F(\t(g)\s\t(g)^{-1})^{-1})|>cK^{-C}\d^{a/d}l_1^{1/2}.
\]
and this proves the lemma.
\end{proof}

\begin{proof}[Proof of Proposition \ref{proposition:flattening2}]
By Lemma \ref{lemma:translation}, there is $g_0\in\prod_{14} A$ which is
a pure translation of length at least $cK^{-C}\d^{a/d}l_1^{1/2}$.
The set
\[
\{h^{-1}g_0h: h\in\textstyle\prod_{6} A\}\subset\textstyle\prod_{26} A
\]
consists of pure translations and its difference set, which is a subset of $A^{52}$,
contains a ball of radius $cK^{-C}\d^{a/d}l_1^{1/2}$ in $\R^d$
by Lemma \ref{lemma:neighborhood}.
Using Lemma \ref{lemma:neighborhood} again, we get that there is a ball $\Theta\subset\SO(d)$
of radius $cK^{-C}$ such that for all $\s\in\Theta$,
\[
\t^{-1}(\s)\cap(\textstyle\prod_{58} A)
\]
contains a ball of radius $cK^{-C}\d^{a/d}l_1^{1/2}$ on the fiber.
Thus
\[
m(\textstyle\prod_{58}{A})\ge cK^{-C}\d^{a}l_1^{d/2},
\]
which contradicts \eqref{equation:A58} if $\a$ and $\d$ are sufficiently small.
\end{proof}

\section{The Bourgain--Gamburd method: norm estimates for measures of large dimension}
\label{section:bourgaingamburd2}

Recall our standing assumptions \eqref{equation:symmetric}--\eqref{equation:moment3}.
Fix a number $1/2>\d>0$.
Recall from the previous section the definition of $P_{\d,l_1}$ and that
$\eta=P_{\d,l_1}*\mu^{*(l_1)}*P_{\d,l_1}$
for some integer $l_1\ge CM^2\log\d^{-1}$.
In the previous section, we proved that there is an integer $K$ depending only on $d$
such that
\be\label{equation:strongNC}
\|\eta^{*(K)}\|_2\le C\d^{-1/20}l_1^{-d/4}.
\ee

Indeed, by the definitions of $P_{\d,l_1}$ and $\eta$ (cf.\ Section~\ref{section:tripling})
\[
\|\eta\|_2\le\|P_{\d,l_1}\|_2\le C\d^{-\dim\Isom(\R^d)/2}l_1^{-d/4}.
\]
Repeated applications of Proposition \ref{proposition:flattening2} with $a=1/20$ then gives:
\[
\|\eta^{*(2^k)}\|_2\le C\d^{\a k-\dim\Isom(\R^d)/2}l_1^{-d/4}
\]
so long as
\[
\|\eta^{*(2^{k-1})}\|_2\ge C\d^{-1/20}l_1^{-d/4}.
\]
It follows then, that there is an integer $K$ depending only on $d$ such that
\[
\|\eta^{*(K)}\|_2\le C\d^{-1/20}l_1^{-d/4}.
\]
In this section we show how the estimate~\eqref{equation:strongNC} implies Theorem~\ref{theorem:technical}.

In Section \ref{section:largedim} we show how to convert this information into an estimate on
$\|\rho_r(\eta)\f\|_2$ for $\f \in L^2(S^{d-1})$.
However, the scale $\d$ that we need to use depends on ``how oscillatory" $\f$ is.
Therefore, we give a Littlewood--Paley type decomposition of the space $L^2(S^{d-1})$
in Section \ref{section:LP}.
We show that the components in this decomposition are almost invariant for the operator
$S_r^{l_0}$, where $l_0$ is a suitable integer.
Then it will be enough to obtain estimates for $\|S_r^{l_0}\f\|_2$ when $\f$ belongs to one of the
components in the Littlewood--Paley decomposition.
This is done in Section \ref{section:completing} using \eqref{equation:strongNC} and the result from
section \ref{section:largedim}.

Numerous parameters will appear in the following sections.
Unfortunately, it is difficult to keep track of their interdependence,
and this feature makes the argument difficult to follow.
However, when $r$ is large (i.e. $r>M$), then the choice of these parameters
is more transparent.
Therefore, we will comment on the values of the parameters in this regime in the course of the proof.

\subsection{A Littlewood--Paley decomposition}
\label{section:LP}

We fix some positive numbers $r,L$.
Write
\be\label{equation:n0}
n_0:=\left[\frac{\log(100r\sqrt{L})}{\log 2}\right]+1.
\ee
We decompose the spaces $L^2(S^{d-1})$ as the orthogonal sum
\[
L^2(S^{d-1})=\cL_0\oplus\cL_1\oplus\ldots,
\]
where
\[
\cL_0=\cH_0\oplus\ldots\oplus\cH_{2^{n_0}}
\]
and
\[
\cL_i=\cH_{2^{i+n_0-1}+1}\oplus\ldots\oplus\cH_{2^{i+n_0}}
\]
and $\cH_i$ is the space of spherical harmonics of degree $i$.

\begin{prp}\label{proposition:LittlewoodPaley}
Let $\f\in L^2(S^{d-1})$ and write
$\f=\f_0+\f_1+\ldots$, where $\f_i\in\cL_i$ for all $i$.
Let $l_0\le L$ be an integer.
Then
\[
\|S_r^{l_0}\f\|_2^2\le\frac{1}{2}\|\f\|_2^2+3\sum_{i=0}^\infty\|S_r^{l_0}\f_i\|_2^2.
\]
\end{prp}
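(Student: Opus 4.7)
The idea is that $S_r^{l_0}$ almost preserves the Littlewood--Paley decomposition, any mixing between $\cL_i$ and $\cL_j$ for $|i-j|\ge 2$ arising purely from the translation part of $\rho_r$, which spreads spherical-harmonic frequencies only by $O(r|v(g)|)$.

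First I would write $\rho_r(g)=N_{v(g)}\cR(\t(g))$, where $N_v$ denotes the unitary multiplication operator $\psi\mapsto e(r\langle\cdot,v\rangle)\psi$. Since each $\cL_i$ is $\SO(d)$-invariant, $\cR(\t(g))$ preserves it exactly, and all mixing between different $\cL_i$'s occurs through $N_{v(g)}$. Iterating and integrating, $S_r^{l_0}=\int N_{v(g)}\cR(\t(g))\,d\mu^{*(l_0)}(g)$; a standard computation using independence of the factors together with~\eqref{equation:centered}--\eqref{equation:moment2} gives $\int|v(g)|^2\,d\mu^{*(l_0)}(g)=l_0\le L$, so that translations contributing to $S_r^{l_0}$ have typical length $\lesssim\sqrt L$.

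The key technical input is the bandlimited-multiplication estimate: for any $\psi\in\cH_n$, $v\in\R^d$, and $N\ge 1$,
\[
\|P_m N_v\psi\|_2\le C_N\bigl(r|v|/|m-n|\bigr)^N\|\psi\|_2\qquad\text{when }|m-n|\ge 2r|v|,
\]
where $P_m$ is the orthogonal projection onto $\cH_m$. This is classical: expanding $e(r\langle\xi,v\rangle)$ by Funk--Hecke in Gegenbauer polynomials of the variable $\langle\xi,v/|v|\rangle$, the coefficients are essentially Bessel functions $J_{k+(d-2)/2}(2\pi r|v|)$ with super-polynomial decay in $k/(r|v|)$. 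Combining this with moment bounds on $|v(g)|$ (using~\eqref{equation:moment3} to control the tail of $\mu^{*(l_0)}$) and with the defining property~\eqref{equation:n0} of $n_0$, namely $2^{n_0}\ge 100 r\sqrt L$, one obtains, for $|i-j|\ge 2$,
\[
\|P_{\cL_j}S_r^{l_0}\f_i\|_2\le C_N\,100^{-N(|i-j|-1)}\,\|\f_i\|_2.
\]

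Finally, set $\psi_{i,j}:=P_{\cL_j}S_r^{l_0}\f_i$ and decompose $S_r^{l_0}\f=\sum_j(A_j+B_j)$ with $A_j:=\sum_{|i-j|\le 1}\psi_{i,j}$ and $B_j:=\sum_{|i-j|\ge 2}\psi_{i,j}$. Since each index $j$ is touched by at most three $i$'s, Cauchy--Schwarz gives $\sum_j\|A_j\|_2^2\le 3\sum_i\|S_r^{l_0}\f_i\|_2^2$, while the decay above yields $\sum_j\|B_j\|_2^2\le\varepsilon\|\f\|_2^2$ for arbitrarily small $\varepsilon$ on taking $N$ large. Expanding $\|A_j+B_j\|_2^2$, summing in $j$, and bounding the cross term $\sum_j\Re\langle A_j,B_j\rangle$ by Cauchy--Schwarz combined with $\|S_r^{l_0}\|\le 1$, one obtains $\|S_r^{l_0}\f\|_2^2\le 3\sum_i\|S_r^{l_0}\f_i\|_2^2+\tfrac{1}{2}\|\f\|_2^2$. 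The main obstacle is the bandlimited estimate of the second step: not its qualitative form, which is standard, but producing quantitative decay uniform in $v$ and strong enough to survive integration against $\mu^{*(l_0)}$ (for which only a third moment is available) and to beat the double sum over pairs $(i,j)$ with $|i-j|\ge 2$; this is precisely why~\eqref{equation:n0} carries the large multiplicative constant $100$.
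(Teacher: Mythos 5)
Your proposal is in the same spirit as the paper's proof — both control the off-diagonal mixing of the Littlewood--Paley decomposition by exploiting the fact that $\cR(\t(g))$ preserves each $\cL_i$ while the translation character $\omega_r(g)$ moves spherical-harmonic degrees only by roughly $r|v(g)|$ — but the intermediate estimate you state is stronger than what the hypotheses allow, and this is a genuine gap.

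Specifically, you claim $\|P_{\cL_j}S_r^{l_0}\f_i\|_2\le C_N\,100^{-N(|i-j|-1)}\|\f_i\|_2$ for arbitrary $N$, obtained by integrating the pointwise Funk--Hecke/Bessel bound $C_N(r|v|/|m-n|)^N$ against $\mu^{*(l_0)}$. This integration requires control of $\int|v(g)|^N\,d\mu^{*(l_0)}(g)$, and under assumptions \eqref{equation:centered}--\eqref{equation:moment3} only the second and third moments are available. You flag this issue yourself at the end ("for which only a third moment is available"), but you do not resolve it; as written, the claimed estimate is not achievable. The paper's resolution is the essential point you are missing: after self-adjointness reduces $\langle S_r^{l_0}\psi_1, S_r^{l_0}\psi_2\rangle$ to $\int\langle\rho_r(g)\psi_1,\psi_2\rangle\,d\mu^{*(2l_0)}(g)$, the superpolynomial pointwise bound $\bigl(2\pi e r|v(g)|/2^{n_0+i}\bigr)^{2^{n_0+i}}$ from the Taylor remainder (Lemma \ref{lemma:Taylor}) is combined with the trivial bound $\le 1$; since $\min(1,a^{K})\le a^2$ for $K\ge 2$, one obtains a power-$2$ estimate $\bigl(2\pi e r|v(g)|/2^{n_0+i}\bigr)^{2}$, which integrates against $\mu^{*(2l_0)}$ using only the second moment \eqref{equation:secondmoment}. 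The resulting decay in $i$ is geometric with ratio $1/4$, not superpolynomial, and the factor $100$ in the definition of $n_0$ \eqref{equation:n0} supplies the missing smallness. If you replace your arbitrary-$N$ estimate by this $N=2$ version, the rest of your almost-orthogonality argument (which is organized slightly differently from the paper's — projections $P_{\cL_j}S_r^{l_0}\f_i$ rather than the paper's nearest-neighbor/far split into $\langle S_r^{l_0}\f_i,S_r^{l_0}\f_{i+1}\rangle$ and $\langle S_r^{l_0}\f_i,S_r^{l_0}\f_{>i+1}\rangle$, but equivalent) does go through. The paper's use of Taylor expansion rather than Funk--Hecke is also more elementary: it avoids Bessel asymptotics entirely.
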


When $r>M$, we will set $L=CM^2\log r$ and $l_0=CM^2$, where $C$ is a constant depending
only on $d$.
Thus one may think of $n_0$ being roughly proportional to $\log r$ and $l_0$ being constant.
The reason why $L$ will be taken larger than $l_0$ is cosmetic:
In Section \ref{section:largedim} we employ two different methods to obtain norm estimates
on the spaces $\cL_i$.
With the above choice of $L$, we get matching bounds for the space $\cL_0$ with the first method
and for the space $\cL_1$ with the second method.

The rest of the section is devoted to the proof of the proposition.
We begin with a lemma on the Taylor expansion of the characters $\o_r$.

\begin{lem}\label{lemma:Taylor}
Let $k\ge0$ be an integer.
Then we can write $\omega_r(g)=\f_1+\f_2$ such that
\[
\f_1\in\cH_0\oplus\ldots\oplus\cH_{k-1}
\quad{\rm and}\quad
\|\f_2\|_\infty\le2\cdot\frac{(2\pi r|v(g)|)^k}{k!}.
\]
\end{lem}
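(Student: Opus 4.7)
The plan is to use Taylor's theorem for the function $t \mapsto e^{-2\pi i r t}$ applied along the line $t = \langle \xi, v(g)\rangle$.

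First I would fix $g \in \Isom(\R^d)$ and, for each $\xi \in S^{d-1}$, regard $\omega_r(g)(\xi) = e(r\langle \xi, v(g)\rangle) = e^{-2\pi i r \langle \xi, v(g)\rangle}$ as the restriction to $t = 1$ of the one-variable function $h(t) := e^{-2\pi i r t \langle \xi, v(g)\rangle}$. Its Taylor polynomial at $t = 0$ of degree $k-1$ gives
\[
P_{k-1}(\xi) := \sum_{j=0}^{k-1}\frac{(-2\pi i r)^j}{j!}\langle \xi, v(g)\rangle^j,
\]
which is a polynomial in $\xi \in \R^d$ of degree at most $k-1$. Set $\varphi_1 := P_{k-1}|_{S^{d-1}}$ and $\varphi_2 := \omega_r(g) - \varphi_1$.

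Next I would verify the two properties. For the membership claim, I would invoke the standard fact that every polynomial on $\R^d$ of degree $\le k-1$, restricted to $S^{d-1}$, is a linear combination of spherical harmonics of degree $\le k-1$ (one writes the polynomial as $\sum_j |x|^{2j} H_{k-1-2j}$ with $H_i$ harmonic of degree $i$, then sets $|x| = 1$). Hence $\varphi_1 \in \cH_0 \oplus \ldots \oplus \cH_{k-1}$. For the remainder bound, the Taylor remainder estimate for $e^{i\theta}$ gives $|e^{i\theta} - \sum_{j=0}^{k-1}(i\theta)^j/j!| \le |\theta|^k/k!$; applying this with $\theta = -2\pi r \langle \xi, v(g)\rangle$ and using $|\langle \xi, v(g)\rangle| \le |v(g)|$ on $S^{d-1}$ yields
\[
|\varphi_2(\xi)| \le \frac{(2\pi r |v(g)|)^k}{k!},
\]
which is even stronger than the stated bound (the factor $2$ in the lemma is slack).

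There is no real obstacle here; the only point requiring care is the decomposition of degree-$\le (k-1)$ polynomials into spherical harmonics of degree $\le k-1$, which is routine. Everything else is one application of the one-dimensional Taylor remainder estimate for $e^{i\theta}$.
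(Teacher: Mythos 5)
Your decomposition $\f_1,\f_2$ is exactly the one the paper uses, but the way you bound $\|\f_2\|_\infty$ is genuinely different and in fact cleaner. The paper estimates the tail of the exponential series $\sum_{n\ge k}\frac{(2\pi r|v(g)|)^n}{n!}$, which requires a case split: if $k\ge 4\pi r|v(g)|$ the ratio test gives a geometric series and hence the factor $2$; otherwise they fall back on the trivial bound $\|\omega_r(g)\|_\infty\le 1$ together with Stirling's inequality $k!\le 2\,(k/e)^k\le 2\,(2\pi r|v(g)|)^k$ to absorb it into the stated bound. You instead invoke the integral (or Lagrange) form of the Taylor remainder for $e^{i\theta}$, which yields $|\f_2(\xi)|\le\frac{|\theta|^k}{k!}$ directly with $\theta=-2\pi r\langle\xi,v(g)\rangle$; this is uniform in $k$, needs no case split, and gives the bound without the extra factor of $2$. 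Your treatment of the membership claim (degree-$\le(k-1)$ polynomials restrict to elements of $\cH_0\oplus\cdots\oplus\cH_{k-1}$) is the standard fact the paper relies on implicitly. In short: same $\f_1,\f_2$, but your remainder estimate is more elementary and slightly sharper than the paper's.
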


\begin{proof}
If $k\le 4\pi r|v(g)|$, we can take $\f_1=0$ and $\f_2=\omega_r(g)$, and the claim follows since
\[
\|\f_2\|_\infty\le 1\le2\cdot\frac{(2\pi r|v(g)|)^k}{k!}
\]
by Stirling's approximation.

For the rest of the proof, we suppose that $k\ge 4\pi r|v(g)|$.
By Taylor expansion:
\[
\omega_r(g)(\xi)
=e(r\langle\xi,v(g)\rangle)
=\sum_{n=0}^{\infty}\frac{(-2\pi i r\langle\xi,v(g)\rangle)^{n}}{n!}.
\]
Write
\[
\f_1(\xi)=\sum_{n=0}^{k-1}\frac{(-2\pi i r\langle\xi,v(g)\rangle)^{n}}{n!} \quad{\rm and}\quad
\f_2=\omega_r(g)-\f_1.
\]
Then clearly $\f_1\in\cH_0\oplus\ldots\oplus\cH_{k-1}$ and
\[
|\f_2(\xi)|
\le\sum_{n=k}^\infty\frac{(2\pi r|v(g)|)^n}{n!}
\le\sum_{n=k}^\infty\frac{(2\pi r|v(g)|)^k}{k!\cdot 2^{n-k}}
\le2\cdot\frac{(2\pi r|v(g)|)^k}{k!}
\]
for all $\xi\in S^{d-1}$ which was to be proved.
\end{proof}

The next Lemma shows that $S_r\psi$ for $\psi\in\cL_i$ may have large correlations only with
functions belonging to $\cL_{i-1}\oplus\cL_i\oplus\cL_{i+1}$.
In the course of the proof we will need the following fact about the second moments
of convolutions of $\mu$:
\be\label{equation:secondmoment}
\left[\int |v(g)|^2\igap d\mu^{*(l)}(g)\right]^{1/2}\le l^{1/2.}
\ee
This can be proved easily by induction starting with \eqref{equation:moment2}.
See Lemma \ref{lemma:moments} below for details.

\begin{lem}\label{lemma:Taylor2}
Let $i\ge0$ be an integer and let
\[
\psi_1\in\cL_i \quad{\rm and}\quad
\psi_2\in\cL_{i+2}\oplus\cL_{i+3}\oplus\ldots.
\]
Then
\[
|\langle S_r^{l_0}\psi_1,S_r^{l_0}\psi_2\rangle|
\le\frac{(2\pi e r)^2\cdot2 l_0}{2^{2(n_0+i)}}\|\psi_1\|_2\|\psi_2\|_2.
\]
\end{lem}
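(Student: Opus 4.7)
The plan is to leverage the symmetry assumption $\mu=\check\mu_0*\mu_0$, which makes $S_r^{l_0}$ self-adjoint, in order to rewrite
\[
\langle S_r^{l_0}\psi_1,S_r^{l_0}\psi_2\rangle
=\langle \psi_1,S_r^{2l_0}\psi_2\rangle
=\int\langle \psi_1,\omega_r(g)\rho_0(g)\psi_2\rangle\,d\mu^{*(2l_0)}(g),
\]
and then to exploit the spherical-harmonic degree gap between $\psi_1$ and $\psi_2$ by applying Lemma \ref{lemma:Taylor} to $\omega_r(g)$ with the critical choice $k=2^{n_0+i}+1$.

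First I would verify that $\f_{1,g}\rho_0(g)\psi_2$ is orthogonal to $\psi_1$ for every $g$. Since $\rho_0(g)$ preserves each space $\cH_m$, the function $\rho_0(g)\psi_2$ is supported in spherical harmonics of degree $\geq 2^{n_0+i+1}+1$. Multiplication of a harmonic of degree $m$ by one of degree $a\leq k-1=2^{n_0+i}$ produces only harmonics of degrees $m-a,m-a+2,\ldots,m+a$ (Clebsch--Gordan), so the lowest degree appearing in $\f_{1,g}\rho_0(g)\psi_2$ is $\geq(2^{n_0+i+1}+1)-(k-1)=2^{n_0+i}+1$, which strictly exceeds the maximum degree $2^{n_0+i}$ present in $\psi_1\in\cL_i$. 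Hence the integrand simplifies to $\langle\psi_1,\f_{2,g}\rho_0(g)\psi_2\rangle$.

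For each $g$, Cauchy--Schwarz together with Lemma \ref{lemma:Taylor} gives the bound $\|\f_{2,g}\|_\infty\|\psi_1\|_2\|\psi_2\|_2\leq 2(2\pi r|v(g)|)^k\|\psi_1\|_2\|\psi_2\|_2/k!$, while unitarity of $\rho_r(g)=\omega_r(g)\rho_0(g)$ gives the trivial bound $\|\psi_1\|_2\|\psi_2\|_2$. Taking the minimum and integrating against $\mu^{*(2l_0)}$, I would split the domain at a threshold $M_0\sim k/(2\pi er)$: on $\{|v(g)|\leq M_0\}$ use the Taylor tail together with the inequality $|v|^k\leq M_0^{k-2}|v|^2$ and the second-moment estimate $\int|v|^2\,d\mu^{*(2l_0)}\leq 2l_0$ from \eqref{equation:secondmoment}; on the complementary set apply Markov's inequality with the same moment bound. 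Stirling's inequality $k!\geq(k/e)^k\sqrt{2\pi k}$ then combines the two contributions into a constant multiple of $(2\pi er)^2\cdot 2l_0/(k-1)^2=(2\pi er)^2\cdot 2l_0/2^{2(n_0+i)}$.

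The main delicate point is to tune the threshold $M_0$ and use Stirling sharply enough to recover the precise coefficient $2$ stated in the lemma, as opposed to a larger absolute constant. This is manageable because $k-1=2^{n_0+i}\geq 1$, so the sub-leading Stirling corrections remain harmless; in fact for $k=2$ one can dispense with the splitting entirely and use the single moment bound directly, while for $k\geq 3$ the split balances the Taylor and Markov contributions optimally.
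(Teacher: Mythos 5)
Your argument follows the paper's proof closely in its first stages: self-adjointness reduces the claim to bounding $\langle \psi_1, S_r^{2l_0}\psi_2\rangle = \int\langle\psi_1,\rho_r(g)\psi_2\rangle\,d\mu^{*(2l_0)}(g)$, then Lemma~\ref{lemma:Taylor} kills the low-degree part of $\omega_r(g)$ by orthogonality, leaving only the contribution of the Taylor tail $\f_2$. One small cosmetic difference: the paper applies $\rho_0(g)$ to the low-degree function $\psi_1$ and uses only the fact that the product of harmonics of degrees $\le A$ and $\le B$ lives in degrees $\le A+B$; you apply it to $\psi_2$ and invoke the Clebsch--Gordan lower bound on the minimal degree in the product. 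Both work, but the paper's version needs the milder fact.

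Where you genuinely depart from the paper is the final integration, and this is where there is a gap. The paper makes the simple pointwise observation that, with $a = 2\pi e r |v(g)|/2^{n_0+i}$ and $N = 2^{n_0+i}\ge 2$ (true since $n_0\ge 1$), one has
\[
\min\bigl(1, a^N\bigr)\le a^2,
\]
so the integrand is $\le (2\pi e r/2^{n_0+i})^2|v(g)|^2$ everywhere, and a single application of the second-moment bound $\int|v(g)|^2\,d\mu^{*(2l_0)}\le 2l_0$ gives exactly the claimed constant. You instead split the domain at a threshold $M_0$ and bound the near piece by $|v|^k\le M_0^{k-2}|v|^2$ plus the moment estimate, and the far piece by Markov ``with the same moment bound.'' As written this double-counts $\int|v|^2\,d\mu^{*(2l_0)}$: each of the two pieces consumes the full second moment, so after balancing you come out with roughly twice the claimed constant (optimising $M_0$ produces $\frac{k}{k-2}\bigl((k-2)/2\bigr)^{2/k}$ where you need $k^2/(k-1)^2$, and the former is strictly larger for all $k\ge 4$). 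Your assertion that the split ``balances the Taylor and Markov contributions optimally'' and that the precise coefficient can be recovered is not correct in the form you describe. The split \emph{can} be repaired by carving the second moment into $\int_{|v|\le M_0}|v|^2$ and $\int_{|v|> M_0}|v|^2$, but at that point you have simply reconstructed the pointwise inequality $\min(1,a^N)\le a^2$ in disguise; the paper's one-line observation avoids the threshold entirely and is what makes the constant come out exactly as stated.
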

\begin{proof}
By \eqref{equation:symmetric}, $S_r$ is selfadjoint, hence
\[
\langle S_r^{l_0}\psi_1,S_r^{l_0}\psi_2\rangle
=\langle S_r^{2l_0}\psi_1,\psi_2\rangle
=\int \langle \rho_r(g)\psi_1,\psi_2\rangle\igap d\mu^{*(2l_0)}(g).
\]
Fix $g$, and apply Lemma \ref{lemma:Taylor} to get $\omega_r(g)=\f_1+\f_2$
such that
\[
\f_1\in\cH_0\oplus\ldots\oplus\cH_{2^{n_0+i}-1}
\quad{\rm and}\quad
\|\f_2\|_\infty\le2\cdot\frac{(2\pi r|v(g)|)^{2^{n_0+i}}}{(2^{n_0+i})!}.
\]
By Stirling's approximation, we have
\[
(2^{n_0+i})!\ge 2\left(\frac{2^{n_0+i}}{e}\right)^{2^{n_0+i}},
\]
hence
\[
\|\f_2\|_\infty\le\left(\frac{2\pi e r|v(g)|}{2^{n_0+i}}\right)^{2^{n_0+i}}.
\]

We can write
\[
 \langle \rho_r(g)\psi_1,\psi_2\rangle=\langle (\f_1+\f_2)\cdot\rho_0(g)\psi_1,\psi_2\rangle.
\]
Note that
\[
\f_1\cdot\rho_0(g)\psi_1\in\cH_0\oplus\ldots\oplus \cH_{2^{i+n_0}+2^{i+n_0}-1},
\]
and hence it is orthogonal to $\psi_2$.
Then the above estimate on $\|\f_2\|_\infty$ gives
\[
| \langle \rho_r(g)\psi_1,\psi_2\rangle|
\le\left(\frac{2\pi e r|v(g)|}{2^{n_0+i}}\right)^{2^{n_0+i}}\cdot\|\psi_1\|_2\|\psi_2\|_2.
\]

On the other hand, we have the trivial estimate
\[
| \langle \rho_r(g)\psi_1,\psi_2\rangle|
\le\|\psi_1\|_2\|\psi_2\|_2,
\]
which combined with the above gives
\[
| \langle \rho_r(g)\psi_1,\psi_2\rangle|
\le\left(\frac{2\pi e r|v(g)|}{2^{n_0+i}}\right)^{2}\cdot\|\psi_1\|_2\|\psi_2\|_2.
\]

Integrating $g$ and using \eqref{equation:secondmoment},
we get
\[
|\langle S_r^{l_0}\psi_1,S_r^{l_0}\psi_2\rangle|
\le \frac{(2\pi e r)^2\cdot 2l_0}{2^{2n_0+2i}}\cdot\|\psi_1\|_2\|\psi_2\|_2,
\]
which was claimed.
\end{proof}

\begin{proof}[Proof of Proposition \ref{proposition:LittlewoodPaley}]
For simplicity, assume that $\|\f\|_2=1$.
For integers $i\ge0$ write
\[
\f_{>i+1}=\f_{i+2}+\f_{i+3}+\ldots.
\]
By simple calculation
\begin{align}
\|S_r^{l_0}\f\|_2^2
&=\sum_{i=0}^{\infty}\|S_r^{l_0}\f_i\|_2^2\nonumber\\
&\qquad{}+\sum_{i=0}^{\infty}\langle S_r^{l_0}\f_i,S_r^{l_0}\f_{i+1}\rangle
+\sum_{i=0}^{\infty}\langle S_r^{l_0}\f_{i+1},S_r^{l_0}\f_i\rangle\label{equation:type1}\\
&\qquad{}+\sum_{i=0}^{\infty}\langle S_r^{l_0}\f_i,S_r^{l_0}\f_{>i+1}\rangle
+\sum_{i=0}^{\infty}\langle S_r^{l_0}\f_{>i+1},S_r^{l_0}\f_i\rangle\label{equation:type2}.
\end{align}

To estimate \eqref{equation:type1}, we write
\[
|\langle S_r^{l_0}\f_i,S_r^{l_0}\f_{i+1}\rangle|
\le\| S_r^{l_0}\f_i\|_2\cdot\| S_r^{l_0}\f_{i+1}\|_2
\le\frac{\| S_r^{l_0}\f_i\|_2^2+\| S_r^{l_0}\f_{i+1}\|_2^2}{2}.
\]
Summing up, we get
\[
|\eqref{equation:type1}|\le2\cdot\sum_{i=0}^{\infty}\| S_r^{l_0}\f_i\|_2^2.
\]

To estimate \eqref{equation:type2}, we use Lemma \ref{lemma:Taylor2}.
We can write
\[
|\langle S_r^{l_0}\f_i,S_r^{l_0}\f_{>i+1}\rangle|
\le\frac{(2\pi e r)^2\cdot2 l_0}{2^{2(n_0+i)}}.
\]
Summing up, this yields
\[
|\eqref{equation:type2}|
\le2\cdot\sum_{i=0}^{\infty}\frac{(2\pi e r)^2\cdot2 l_0}{2^{2(n_0+i)}}
\le\frac{8}{3}\cdot\frac{(2\pi e r)^2\cdot2 l_0}{2^{2n_0}}
\le\frac{8}{3}\cdot\frac{(2\pi e r)^2\cdot2 l_0}{100^2r^2L}.
\]
For the last inequality, we used the definition of $n_0$.
Since $l_0\le L$ and
\[
\frac{8}{3}\cdot\frac{(2\pi e)^2\cdot2 }{100^2}\le\frac{1}{2},
\]
this proves the proposition.

\end{proof}

\subsection{Measures of large dimension}
\label{section:largedim}

As in Section \ref{section:LP}, we fix some positive numbers $r,L$.
Let $n_0$ and $\cL_i$ be the same as in that section.

We prove in this section that for any number $i$ and functions $\f_1,\f_2\in\cL_i$,
for most $g\in\Isom(\R^d)$, $\rho_r(g)\f_1$ and $\f_2$ are almost
orthogonal, that is $|\langle\rho_r(g)\f_1,\f_2\rangle|$ is small.
In fact, the exceptional set, where $|\langle\rho_r(g)\f_1,\f_2\rangle|$ is large will be so small
that  the ``strong non-concentra\-tion" estimate \eqref{equation:strongNC} implies that the above
inner product is small for $\eta$-typical $g\in\Isom(\R^d)$, as well.
This implies then an estimate for $\|\rho_r(\eta)\f_1\|_2^2=\langle\rho_r(\eta)^2\f_1,\f_1\rangle$.
Recall the definition of $\eta$ from the beginning of the section, in particular observe
that it is symmetric.

The purpose of this section is to prove the following proposition.

\begin{prp}\label{proposition:largedimension}
Let $R\ge 0$, and denote by $B_R$ the set of isometries $\g$ in $\Isom(\R^d)$
for some $d\ge 3$  such that $|v(\g)|<R$.
Fix some $\f_1,\f_2\in\cL_i$.
If $i=0$, we have
\[
\frac{1}{m(B_R)}\int_{B_R}|\langle\rho_r(g)\f_1,\f_2\rangle|\igap dm(g)
\le C  (rR)^{-(d-1)/2}\|\f_1\|_2\|\f_2\|_2.
\]
If $i>0$, we have
\[
\frac{1}{m(B_R)}\int_{B_R}|\langle\rho_r(g)\f_1,\f_2\rangle|\igap dm(g)
\le C 2^{-(n_0+i)(d-2)/2}\|\f_1\|_2\|\f_2\|_2.
\]
\end{prp}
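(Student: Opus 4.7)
Writing $g=(v,\t)$ and unpacking the definition of $\rho_r$,
\[
\langle\rho_r(g)\f_1,\f_2\rangle=\int_{S^{d-1}}e(r\langle\xi,v\rangle)\f_1(\t^{-1}\xi)\overline{\f_2(\xi)}\,d\xi=\widehat{h_\t\,d\sigma}(rv),
\]
where $h_\t(\xi):=\f_1(\t^{-1}\xi)\overline{\f_2(\xi)}$ and $d\sigma$ is surface measure on $S^{d-1}$. The cases $i>0$ and $i=0$ are handled by completely different mechanisms, reflecting the difference between having a high-dimensional $\SO(d)$-irreducible component available and being forced to work with the trivial representation.

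For $i>0$ the plan is a direct application of Proposition~\ref{proposition:BG-largedim}. The subspace $\cL_i$ is $\rho_0(\SO(d))$-invariant, and its irreducible components are the $\cH_n$ with $2^{n_0+i-1}<n\le 2^{n_0+i}$; by Weyl's dimension formula each has $\dim\ge c\,2^{(n_0+i)(d-2)}$. Factoring $\rho_r(v,\t)=M_{\omega_r(v)}\rho_0(\t)$, where $M_\psi$ denotes multiplication by the unimodular function $\psi$, for every fixed $v$ one has
\[
\langle\rho_r(g)\f_1,\f_2\rangle=\langle\rho_0(\t)\f_1,\overline{\omega_r(v)}\f_2\rangle=\langle\rho_0(\t)\f_1,P_i(\overline{\omega_r(v)}\f_2)\rangle,
\]
the last equality because $\rho_0(\t)\f_1\in\cL_i$; here $P_i$ is the orthogonal projection onto $\cL_i$. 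Since $\|P_i(\overline{\omega_r(v)}\f_2)\|_2\le\|\f_2\|_2$, Proposition~\ref{proposition:BG-largedim} gives
\[
\Bigl(\int_{\SO(d)}|\langle\rho_r(g)\f_1,\f_2\rangle|^2\,dm(\t)\Bigr)^{1/2}\le C\,2^{-(n_0+i)(d-2)/2}\|\f_1\|_2\|\f_2\|_2
\]
uniformly in $v$. Since the Haar measure on $\SO(d)$ is a probability, this $L^2$-bound dominates the corresponding $L^1$-average over $\t$, and a further Cauchy--Schwarz averaging over $v\in B(0,R)$ finishes the $i>0$ case.

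For $i=0$, Proposition~\ref{proposition:BG-largedim} is useless since $\cH_0$ is one-dimensional, and the plan is instead to extract the classical $|\xi|^{-(d-1)/2}$ Fourier decay carried by measures on the sphere. Substituting $u=rv$ and Fubini,
\[
\frac{1}{m(B_R)}\int_{B_R}|\widehat{h_\t\,d\sigma}(rv)|^2\,dm(g)=\frac{1}{V(rR)^d}\int_{\SO(d)}\int_{B(0,rR)}|\widehat{h_\t\,d\sigma}(u)|^2\,du\,dm(\t),
\]
with $V:=m_{\R^d}(B(0,1))$. The averaging identity $\int_{\SO(d)}\|h_\t\|_{L^2(S^{d-1})}^2\,dm(\t)=\|\f_1\|_2^2\|\f_2\|_2^2/\mathrm{vol}(S^{d-1})$ together with a local sphere-restriction inequality
\[
\int_{B(0,K)}|\widehat{f\,d\sigma}(u)|^2\,du\le CK\|f\|_{L^2(S^{d-1})}^2\qquad(K\ge 1)
\]
bound the right-hand side by $C(rR)^{-(d-1)}\|\f_1\|_2^2\|\f_2\|_2^2$, and one last Cauchy--Schwarz converts this into the claimed $(rR)^{-(d-1)/2}$ estimate on the $L^1$ average.

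The main obstacle is the local sphere-restriction inequality above, with constants independent of the spherical-harmonic degree content of $f$. Expanding $f=\sum_n Y_n$ with $Y_n\in\cH_n$ and invoking Bochner's formula $\widehat{Y_n\,d\sigma}(u)=c\,i^{-n}Y_n(\hat u)|u|^{-(d-2)/2}J_{n+(d-2)/2}(2\pi|u|)$ along with orthogonality of the $\cH_n$, the task reduces to bounding $\int_0^T t\,J_\nu(t)^2\,dt$ by $O(T)$ uniformly in $\nu\ge 0$. In the oscillatory regime $t\gg\nu$ this follows from Lommel's identity $\int_0^T t\,J_\nu(t)^2\,dt=(T^2/2)[J_\nu(T)^2-J_{\nu-1}(T)J_{\nu+1}(T)]$ combined with the asymptotic $|J_\nu(t)|\lesssim t^{-1/2}$, while the turning-point contribution near $t\sim\nu$ is absorbed using the Airy-type bound $|J_\nu(t)|\lesssim\nu^{-1/3}$; once this is in place, both cases of Proposition~\ref{proposition:largedimension} are established.
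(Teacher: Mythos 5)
For $i>0$ your argument is essentially the paper's: you factor $\rho_r(v,\t)=M_{\omega_r(v)}\rho_0(\t)$, feed $\f_1$ and (the $\cL_i$-projection of) $\overline{\omega_r(v)}\f_2$ into Proposition~\ref{proposition:BG-largedim} on the representation $\rho_0\restr{\cL_i}$ with the dimension lower bound $c\,2^{(n_0+i)(d-2)}$, and then average over $\t$ and $v$. Your observation that one should pass to the projection $P_i(\overline{\omega_r(v)}\f_2)$ is correct and actually spells out a step the paper leaves implicit.

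For $i=0$ your route is genuinely different from the paper's, though it lands in the same place. You recognize $\langle\rho_r(g)\f_1,\f_2\rangle$ as $\widehat{h_\t\,d\sigma}(rv)$ and reduce the claim to the Agmon--H\"ormander-type local estimate $\int_{B(0,K)}|\widehat{f\,d\sigma}|^2\le CK\|f\|_{L^2(S^{d-1})}^2$ with constant independent of the spherical-harmonic degree content of $f$, which you then prove by expanding into $\cH_n$, invoking the Hecke--Bochner formula, and controlling $\int_0^T t\,J_\nu(t)^2\,dt=O(T)$ uniformly in $\nu$ via Lommel's identity together with the flat and turning-point Bessel asymptotics. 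That works, but it is heavier machinery than what the paper uses: Lemma~\ref{lemma:largedimtranslation} obtains the same $(rR)^{-(d-1)/2}$ bound by a short mollification trick, convolving the surface-density measure $\l$ with a scaled bump $F_\rho$ whose Fourier transform dominates $1$ on the unit ball, estimating $\|F_\rho*\l\|_2^2\le C\rho\|\f_1\f_2\|_2^2$ by Cauchy--Schwarz on the physical side (exploiting the $\rho^{-1}$-localization of $F_\rho$), and concluding by Plancherel. The paper thus avoids Bochner's formula and Bessel estimates entirely, and its argument is self-contained; your version is closer to the classical $L^2$ restriction/extension literature and makes the underlying harmonic analysis explicit. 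One cosmetic point: in the $i>0$ case no "further Cauchy--Schwarz over $v$" is needed once you have the uniform-in-$v$ bound over $\t$; integrating in $v$ is enough (or, equivalently, do a single Cauchy--Schwarz over all of $B_R$ at the outset and then use the pointwise-in-$v$ $L^2(\SO(d))$ estimate).
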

Here and in what follows, we denote by $m$ the Haar measures on both $\Isom(\R^d)$ and $\SO(d)$.
On the first group we take an arbitrary normalization, on the second one we take it to be
a probability measure.

We use two different methods to establish these estimates depending on $i$.
If $i>0$, we fix the translation component of $g$ and deduce the
claim from the corresponding result for the
rotation group $\SO(d)$, i.e. Proposition \ref{proposition:BG-largedim}.

If $i=0$, this method does not give a satisfactory result, since the functions in $\cH_0$
are not oscillatory enough (equivalently, the dimension of the relevant irreducible representations
of $\SO(d)$ are not big enough).
Instead, we fix the rotation component of $g$, and look at $\langle\omega_r(v)\rho_0(\t)\f_1,\f_2\rangle$
as a function of $v$.
(Here and below, we abuse notation, and write $\omega_r(v)=\omega_r(g)$ for any $g$ with $v(g)=v$,
which is permissible as  $\omega_r(g)$ depends only on $v(g)$.)
The function $\langle\omega_r(v)\rho_0(\t)\f_1,\f_2\rangle$
is easily seen to be the Fourier transform at $rv$ of the measure supported on $S^{d-1}$ with density
$\rho_0(\t)\f_1\cdot\overline{\f_2}$.
We can estimate this via Plancherel's formula in terms of $\|\rho_0(\t)\f_1\cdot\overline{\f_2}\|_2$.
Finally we show that this $L^2$ norm can be bounded on average (for $\t$) in terms of
$\|\f_1\|_2$ and $\|\f_2\|_2$.

We begin with the case $i=0$.

\begin{lem}\label{lemma:largedimtranslation}
Let $R\ge1$, and let $\f_1,\f_2$ be continuous functions on $S^{d-1}$.
Then
\be\label{equation:largedimtranslation}
R^{-d}\int_{|v|\le R}|\langle\omega_r(v)\f_1,\f_2\rangle|\igap dv
\le C (rR)^{-(d-1)/2}\|\f_1\f_2\|_2.
\ee
\end{lem}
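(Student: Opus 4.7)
The crucial identification is that, since $\omega_r(v)(\xi)=e(r\langle\xi,v\rangle)$ depends only on $v$, the inner product
\[
\langle\omega_r(v)\f_1,\f_2\rangle = \int_{S^{d-1}} e(r\langle\xi,v\rangle)\,\f_1(\xi)\overline{\f_2(\xi)}\igap d\sigma(\xi) = \wh{\mu}(rv),
\]
where $\mu = \f_1\overline{\f_2}\,d\sigma$ denotes the measure carried by the unit sphere in $\R^d$ with density $\f_1\overline{\f_2}$ against surface measure. Substituting $u = rv$ rewrites the left hand side of \eqref{equation:largedimtranslation} as
\[
(rR)^{-d}\int_{|u|\le rR}|\wh{\mu}(u)|\igap du,
\]
so the lemma reduces to proving $\int_{|u|\le rR}|\wh{\mu}(u)|\igap du \le C(rR)^{(d+1)/2}\|\f_1\f_2\|_2$.

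For this reduced estimate, Cauchy--Schwarz gives
\[
\int_{|u|\le rR}|\wh{\mu}(u)|\igap du \le |B(rR)|^{1/2}\|\wh{\mu}\|_{L^2(B(rR))} \le C(rR)^{d/2}\|\wh{\mu}\|_{L^2(B(rR))}.
\]
The remaining $L^2$ norm is controlled by the classical trace inequality for Fourier transforms of surface-carried measures:
\[
\int_{|u|\le \rho}|\wh{f\,d\sigma}(u)|^2\igap du \le C\rho\,\|f\|_{L^2(S^{d-1})}^2.
\]
Applied with $f = \f_1\overline{\f_2}$ and $\rho = rR$, this produces $\|\wh{\mu}\|_{L^2(B(rR))}\le C(rR)^{1/2}\|\f_1\f_2\|_2$, and combining with the Cauchy--Schwarz bound yields the desired inequality.

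The only non-routine ingredient is the trace estimate for $\wh{f\,d\sigma}$. It encodes the fact that, although $\wh{f\,d\sigma}$ is not in $L^2(\R^d)$, its $L^2$ mass on $B(\rho)$ grows only linearly in $\rho$, consistent with the pointwise decay $|\wh{d\sigma}(\xi)|\le C|\xi|^{-(d-1)/2}$ that already appeared in Lemma~\ref{lemma:decomposition}. I would simply cite this as a standard Agmon--H\"ormander restriction-type bound; everything else in the proof is routine manipulation with Cauchy--Schwarz and a change of variables.
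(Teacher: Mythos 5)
Your proof is correct and follows essentially the same route as the paper: identify $\langle\omega_r(v)\f_1,\f_2\rangle$ as $\wh{\l}(rv)$ for the surface-carried measure with density $\f_1\overline{\f_2}$, apply Cauchy--Schwarz over the ball of radius $rR$, and invoke the $L^2$-trace bound $\|\wh{f\,d\sigma}\|_{L^2(B(\rho))}^2\le C\rho\|f\|_{L^2(S^{d-1})}^2$. The only difference is that you cite this as a standard Agmon--H\"ormander restriction estimate, whereas the paper proves it inline by convolving $\l$ with a rescaled bump $F_\rho$ and using, via Cauchy--Schwarz, that a ball of radius $\rho^{-1}$ meets $S^{d-1}$ in a cap of measure $O(\rho^{-(d-1)})$.
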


\begin{proof}
Denote by $\l$ the measure on $\R^d$ defined by
\[
\int f(\xi) d\l(\xi)=\int_{S^{d-1}} f(\xi)\f_1(\xi)\overline{\f_2(\xi)}\igap d\xi.
\]
Observe that
\[
\langle\omega_r(v)\f_1,\f_2\rangle
= \wh{\l}(rv).
\]

Let $F$ be a continuous compactly supported function on $\R^d$ such that
$\wh F(x)\ge 1$ for $x\le1$.
Denote by $r_0$, the smallest number such that $F$ is supported in the ball of
radius $r_0$ centered at $0$.
Set
\[
F_\rho(\xi)=\rho^d\cdot F(\rho\xi)
\]
for numbers $\rho>0$.
Then $\wh{F_\rho*\l}(x)\ge\wh\l(x)$ for $|x|\le \rho$.

We estimate $\|F_{Rr}*\l\|_2$ and then use Plancherel's formula to obtain an estimate
for the average size of its Fourier transform in the ball of radius $Rr$, which is the
left hand side of \eqref{equation:largedimtranslation}.
Denote by $\chi_\rho(\xi,\zeta)$ the function on $\R^d\times S^{d-1}$,
which is $1$ if $|\xi-\zeta|<\rho^{-1}r_0$ and $0$ otherwise.
Note the identity $\chi_\rho(\xi,\zeta)F_\rho(\zeta-\xi)=F_\rho(\zeta-\xi)$.
By the Cauchy-Schwartz inequality,
\begin{align*}
\|F_{\rho}*\l\|_{L^2(\R^d)}^2
&=\int_{\R^d}\left|\int_{S^{d-1}}F_\rho(\zeta-\xi)\f_1(\xi)\overline\f_2(\xi)\igap d\xi\right|^2\igap d\zeta\\
&=\int_{\R^d}\left|\int_{S^{d-1}}\chi_\rho(\xi,\zeta)F_\rho(\zeta-\xi)\f_1(\xi)\overline\f_2(\xi)\igap d\xi\right|^2
\igap d\zeta\\
&\le C\rho^{-d+1}\int_{\R^d}\int_{S^{d-1}}F_\rho(\zeta-\xi)^2|\f_1(\xi)\f_2(\xi)|^2\igap d\xi d\zeta\\
&\le C\rho^{-d+1}\|F_\rho\|_{L^2(\R^d)}^2\|\f_1\f_2\|_{L^2(S^{d-1})}^2\le C\rho\|\f_1\f_2\|_{L^2(S^{d-1})}^2.
\end{align*}

Using the Cauchy-Schwartz inequality and then Plancherel's formula, we get
\begin{align*}
R^{-d}\int_{|v|\le R}|\langle\omega_r(v)\f_1,\f_2\rangle|\igap dv
&=(Rr)^{-d}\int_{|x|\le Rr}\wh\l(x)\igap dx\\
&\le(Rr)^{-d}\int_{|x|\le Rr}\wh{F_{Rr}*\l}(x)\igap dx\\
&\le C(Rr)^{-d/2}\left[\int_{|x|\le Rr}(\wh{F_{Rr}*\l}(x))^2\igap dx\right]^{1/2}\\
&\le C(Rr)^{-d/2}\|F_{Rr}*\l\|_{L^2(\R^d)}\\
&\le C(Rr)^{-(d-1)/2}\|\f_1\f_2\|_{L^2(S^{d-1})}.
\end{align*}
This proves the lemma.
\end{proof}

\begin{proof}[Proof of Proposition \ref{proposition:largedimension} for $i=0$]
We can write
\begin{align}
&\frac{1}{m(B_R)}\int_{B_R}|\langle\rho_r(g)\f_1,\f_2\rangle|\igap dm(g)\nonumber\\
\label{equation:largedim0}
&\le CR^{-d}\int_{\SO(d)}\int_{|v|\le R}|\langle\omega_r(v)\rho_0(\s)\f_1,\f_2\rangle|\igap dvdm(\s).
\end{align}
Using Lemma \ref{lemma:largedimtranslation} for $\rho_0(\s)\f_1$ and $\f_2$ for each fixed
$\s \in\SO(d)$, we get
\begin{align*}
\eqref{equation:largedim0}
&\le C (rR)^{-(d-1)/2}\int_{\SO(d)}\|(\rho_0(\s)\f_1)\f_2\|_2\igap dm(\s)\\
&\le C (rR)^{-(d-1)/2}\left(\int_{\SO(d)}\|(\rho_0(\s)\f_1)\f_2\|_2^2\igap dm(\s)\right)^{1/2}\\
&=C (rR)^{-(d-1)/2}\left(\int_{\SO(d)}\int_{S^{d-1}}|\f_1(\s^{-1}\xi)\f_2(\xi)|^2\igap d\xi dm(\s)\right)^{1/2}\\
&=C (rR)^{-(d-1)/2}\|\f_1\|_2\|\f_2\|_2.
\end{align*}
\end{proof}

\begin{proof}[Proof of Proposition \ref{proposition:largedimension} for $i>0$]
We use Proposition \ref{proposition:BG-largedim} for the group $G=\SO(d)$
and for the restriction of the regular representation to the space $\cL_i$.
The irreducible components of this representation are $\cH_j$ for $j=2^{i+n_0-1}+1,\ldots,2^{i+n_0}$.
The dimension of $\cH_j$ is
\[
\binom{d+j-1}{d-1}-\binom{d+j-3}{d-1}\ge c j^{d-2}
\]
(see \cite[Sect. IV.2]{SW-Intro}).
Thus all irreducible components of $\cL_i$ are of dimension at least $c2^{(i+n_0)(d-2)}$.

Proposition \ref{proposition:BG-largedim} then gives
\begin{align*}
\int_{\SO(d)}|\langle\rho_0(\t)f_1,f_2 \rangle|\igap dm(\t)
&\le
\left[\int_{\SO(d)}|\langle\rho_0(\t)f_1,f_2 \rangle|^2\igap dm(\t)\right]^{1/2}\\
&\le C\frac{\|f_1\|_2\|f_2\|_2}{2^{(i+n_0)(d-2)/2}}.
\end{align*}
We use this inequality with $f_1=\f_1$ and $f_2=\overline{\omega_r(g)}\cdot \f_2$:
\begin{align*}
&\frac{1}{m(B_R)}\int_{B_R}|\langle\rho_r(g)\f_1,\f_2\rangle|\igap dm(g)\\
&\qquad{}=\frac{1}{m(B_R)}\int_{|v|\le R}\int_{\SO(d)}|\langle\rho_0(\t)\f_1,\overline{\omega_r(v)}\f_2\rangle|
\igap dm(\t)dv\\
&\qquad{}\le C\frac{\|\f_1\|_2\|\f_2\|_2}{2^{(i+n_0)(d-2)/2}}.
\end{align*}
\end{proof}

\subsection{Completing the proof}
\label{section:completing}

As in the previous sections, we fix some numbers $r,L$ and let $n_0$ and $\cL_i$ be as
defined in Section \ref{section:LP}.
In addition, we fix some number $i$, and a function $\f_i\in\cL_i$.
Recall the definition $\eta=P_{\d,l_1}*\mu^{*(l_1)}*P_{\d,l_1}$ from the beginning of the section.

Our aim in this section is to prove the following proposition.
\begin{prp}\label{proposition:l0}
There is a number $C$ depending only on $d\ge3$, and an integer
\[
l_0\le C\max(M^2,r^{-2})
\]
such that
\[
\|S_r^{l_0}\f_i\|_2\le\frac{1}{10}\|\f_i\|_2.
\]
\end{prp}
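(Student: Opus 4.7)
The argument splits on the range of $r$.

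\emph{Case 1: $r\le c_0M^{-1}$.} Here Lemma~\ref{lemma:smallr} already gives $\|S_r\|\le 1-cr^2$ directly. Take $l_0 = \lceil C(d)/r^2\rceil$ with $C(d)$ chosen so that $(1-cr^2)^{l_0}\le 1/10$; then $l_0\le C r^{-2}\le C\max(M^2,r^{-2})$, and the bound holds for every $i$ uniformly.

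\emph{Case 2: $r>c_0M^{-1}$.} In this range $r^{-2}<CM^2$, so we must produce $l_0\le CM^2$. This is where the flattening estimate~\eqref{equation:strongNC} is used. Pick $\d$ to be a small constant depending only on $d$, let $l_1 = \lceil C_0M^2\log\d^{-1}\rceil = O(M^2)$, set $l_0 := K l_1 = O(M^2)$ (with $K=K(d)$ from~\eqref{equation:strongNC}), and let $L$ be a large enough dimensional multiple of $M^2$ so that $L\ge l_0$ and $n_0 = [\log_2(100r\sqrt L)]+1\ge C_1(d)$; this last step uses $rM\ge c_0$. Since $\mu$ is symmetric, write
\[
\|S_r^{l_0}\f_i\|_2^2 = \langle \rho_r(\mu^{*(2l_0)})\f_i,\f_i\rangle,
\]
and compare this with $\|\rho_r(\eta^{*(K)})\f_i\|_2^2 = \langle\rho_r(\eta^{*(2K)})\f_i,\f_i\rangle$. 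The latter is estimated by expanding
\[
\langle\rho_r(\eta^{*(2K)})\f_i,\f_i\rangle = \int \langle\rho_r(g)\f_i,\f_i\rangle\,\eta^{*(2K)}(g)\,dm(g),
\]
using Young's inequality $\|\eta^{*(2K)}\|_\infty\le\|\eta^{*(K)}\|_2^2\le Cl_1^{-d/2}$ from~\eqref{equation:strongNC}, truncating $\eta^{*(2K)}$ to a ball $B_R$ of radius $R=O(\sqrt{Kl_1})=O(M)$ (the tail being negligible by Chebyshev combined with~\eqref{equation:secondmoment}--\eqref{equation:moment3}), and invoking Proposition~\ref{proposition:largedimension}. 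This yields a bound of the form $C\cdot 2^{-(n_0+i)(d-2)/2}\|\f_i\|_2^2$ for $i\ge 1$, and $C(rR)^{-(d-1)/2}\|\f_i\|_2^2$ for $i=0$. Both are $\le\|\f_i\|_2^2/100$ once $L$ (hence $n_0$) and $R$ are chosen as sufficiently large dimensional multiples of their natural scales, using $rM\ge c_0$ to bound $(rR)^{-(d-1)/2}$ as well.

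\emph{Main obstacle.} The delicate point is transferring the bound on $\|\rho_r(\eta^{*(K)})\f_i\|_2$ to one on $\|S_r^{l_0}\f_i\|_2$ with $l_0 = K l_1$. The operator $\rho_r(\eta^{*(K)}) = [\rho_r(P_{\d,l_1})\, S_r^{l_1}\,\rho_r(P_{\d,l_1})]^K$ differs from $S_r^{Kl_1}$ by $2K$ factors of $\rho_r(P_{\d,l_1})$, each of which is only a contraction and not literally the identity on $\cL_i$ in the tight regime where $r\d\sqrt{l_1}\sim rM\d$ is merely of bounded order. One must check that these smoothing factors do not destroy the estimate---for instance by showing that on the range of $S_r^{l_1}|_{\cL_i}$ the operator $\rho_r(P_{\d,l_1})$ is close to the identity in $L^2$ norm (its kernel being concentrated near the identity of $\Isom(\R^d)$), and propagating this through $K = K(d)$ iterations. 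Equally delicate is the treatment of $i=0$, where the useful decay is $(rR)^{-(d-1)/2}$ rather than the rotational decay $2^{-(n_0+i)(d-2)/2}$, so one has to choose $l_1$ (hence $R$) as a sufficiently large dimensional multiple of $M^2$ to make $rR$ large, all while keeping $l_0 = Kl_1 \le CM^2$.
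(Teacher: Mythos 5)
Your Case~1 (where $r\le c_0M^{-1}$) is correct and in fact simpler than what the paper does: appealing to Lemma~\ref{lemma:smallr} directly gives $\|S_r\|\le 1-cr^2$, and taking $l_0=\lceil C(d)r^{-2}\rceil$ finishes that range without any Bourgain--Gamburd machinery. The paper handles this range through the same Littlewood--Paley argument as the large-$r$ range, so your shortcut is a genuine simplification there.

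Your Case~2, however, has a real gap, and it is exactly the ``main obstacle'' you flagged but then waved past. You want to take $\d$ to be a dimensional constant and $l_1=O(M^2)$ independent of $i$, so that $l_0=Kl_1=O(M^2)$. But for the smoothing factors $\rho_r(P_{\d,l_1})$ to be near the identity on $\cL_i$, one needs the rotation part of a typical $g\in B_{\d,l_1}$ to move a degree-$2^{n_0+i}$ spherical harmonic by $o(1)$, which forces $\d\lesssim 2^{-(n_0+i)}$; this is precisely the content of Lemma~\ref{lemma:delta}, where the error term $C\d\cdot 2^{n_0+i}$ appears. The paper indeed takes $\d=2^{-2(n_0+i)}$ and then must take $l_1\ge C M^2\log\d^{-1}\sim M^2(n_0+i)$, growing linearly in $i$. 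So a constant $\d$ does \emph{not} make $\rho_r(P_{\d,l_1})$ close to the identity on $\cL_i$ for large $i$, and your transfer from $\|\rho_r(\eta^{*(K)})\f_i\|_2$ to $\|S_r^{l_0}\f_i\|_2$ breaks down. Setting $l_0=Kl_1$ with the correct ($i$-dependent) $l_1$ is also not an option, since then $l_0$ would be unbounded in $i$, violating the statement.

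The paper resolves the tension differently: with the $i$-dependent choice of $\d$ and $l_1$, it first proves (through the three conditions of Lemma~\ref{lemma:defs} and the chain of lemmata culminating in \eqref{equation:Srl1}) the exponentially strong bound $\|S_r^{l_1}\f_i\|_2\le 2^{-(n_0+i)/(100K)}$. It then uses the fact that $S_r$ is a positive self-adjoint contraction (hence $m\mapsto\log\|S_r^m\f\|_2$ is convex) to interpolate down to a \emph{fixed} $l_0\le l_1$ independent of $i$, getting $\|S_r^{l_0}\f_i\|_2\le 2^{-(n_0+i)l_0/(100Kl_1)}$; since $(n_0+i)/l_1\gtrsim 1/M^2$ is bounded below uniformly in $i$, the exponent $(n_0+i)l_0/l_1$ can be made large with $l_0=O(\max(M^2,r^{-2}))$. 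This interpolation step is the missing idea that your proposal would need to make Case~2 work.
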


We combine this with Proposition \ref{proposition:LittlewoodPaley}
and get
\[
\|S_r^{l_0}\f\|_2\le\frac{3}{4}\|\f\|_2
\]
for all $\f\in L^2(S^{d-1})$.
The bound on $l_0$ now clearly implies Theorem \ref{theorem:technical}.
Recall that $S_r$ is selfadjoint.

Therefore, it remains to prove Proposition \ref{proposition:l0}.
To simplify the notation, we omit the subscript and write $\f$ instead of $\f_i$.
Moreover, we assume that $\|\f\|_2=1$.

We set $\d=2^{-2(n_0+i)}$.
(Recall the definition of $n_0$ from Section \ref{section:LP}.)
We already mentioned that when $r$ is large, we will put $L=CM^2\log r$ and this implies that
$n_0$ is proportional to $\log r$.
In the same regime, we will take $l_1=CM^2(n_0+i)$;
$l_1$ is the parameter that appears in the definition of $\eta$.
However, in any case, we will choose $l_1$ in such a way that the condition
\be\label{equation:condition-l1}
l_1\ge CM^2\log \d^{-1}
\ee
of Proposition \ref{proposition:flattening2}
is satisfied.
In addition, we will stipulate two more conditions on the parameters later.
We will check at the end of the proof that the conditions hold with a suitable choice of the parameters.

As we noted at the beginning of Section \ref{section:largedim}, Proposition \ref{proposition:largedimension} allows us
to convert \eqref{equation:strongNC}
into an upper bound on $\|\rho_r(\eta^{*(2K)})\f\|_2$ with $K$ as in \eqref{equation:strongNC} and hence
into a bound on $\|\rho_r(\eta)\f\|_2$.
This is done in the next lemma.

\begin{lem}
Suppose that  \eqref{equation:condition-l1} holds and
\be\label{equation:condition-L}
rl_1^{1/2}2^{n_0/12}\ge 2^{n_0}.
\ee
Then
\be\label{equation:eta-estimate}
\|\rho_r(\eta)\f\|_2\le C 2^{-(n_0+i)/(40K)}.
\ee
\end{lem}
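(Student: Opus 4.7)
The plan is to reduce the bound on $\|\rho_r(\eta)\f\|_2$ to a bound on the high-convolution quantity $\langle\rho_r(\eta^{*(2K)})\f,\f\rangle$, which can be controlled via the strong non-concentration estimate~\eqref{equation:strongNC} combined with Proposition~\ref{proposition:largedimension}.

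First I would use that $\eta$ is symmetric (by construction $P_{\d,l_1}$ is symmetric and $\mu$ satisfies~\eqref{equation:symmetric}), so $\rho_r(\eta)$ is self-adjoint. Applying Jensen's inequality to the convex function $\l\mapsto\l^K$ on the spectral measure of $\rho_r(\eta)^2$ associated to $\f$ yields
\[
\|\rho_r(\eta)\f\|_2^{2K}\le\langle\rho_r(\eta^{*(2K)})\f,\f\rangle.
\]
Thus it suffices to prove $\langle\rho_r(\eta^{*(2K)})\f,\f\rangle\le C\,2^{-(n_0+i)/20}$ and take the $(2K)$-th root.

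Writing $\eta_{2K}$ for the density of $\eta^{*(2K)}$ (which exists in $L^2$ since $\eta$ has an $L^2$ density) and $F(g):=\langle\rho_r(g)\f,\f\rangle$, we have $\langle\rho_r(\eta^{*(2K)})\f,\f\rangle=\int F(g)\eta_{2K}(g)\,dm(g)$ and $|F|\le1$. I would split this integral over $B_R$ and its complement for $R=2^{\b(n_0+i)}l_1^{1/2}$ with a small $\b>0$ (to be tuned, e.g.\ $\b=1/20$). On $B_R^c$, use $|F|\le1$ and Markov's inequality applied to $\int|v(g)|^2d\eta^{*(2K)}(g)\lesssim Kl_1$. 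This moment bound follows by induction from $\int v\,d\eta=0$ (a consequence of~\eqref{equation:centered} and the symmetry of $P_{\d,l_1}$) and $\int|v|^2 d\eta\lesssim l_1$ (using~\eqref{equation:moment2} and the support of $P_{\d,l_1}$). The resulting bound is $\lesssim K\cdot2^{-2\b(n_0+i)}$.

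On $B_R$, Cauchy--Schwartz gives $\bigl|\int_{B_R}F\eta_{2K}\,dm\bigr|\le\|\eta_{2K}\|_2\,\|F\|_{L^2(B_R)}$. By Young's inequality, $\|\eta_{2K}\|_2=\|\eta^{*(K)}*\eta^{*(K)}\|_2\le\|\eta^{*(K)}\|_2$, and the hypothesis~\eqref{equation:condition-l1} lets us invoke~\eqref{equation:strongNC} with $\d=2^{-2(n_0+i)}$, giving $\|\eta^{*(K)}\|_2\le C\cdot2^{(n_0+i)/10}l_1^{-d/4}$. Using $|F|^2\le|F|$ together with Proposition~\ref{proposition:largedimension},
\[
\|F\|_{L^2(B_R)}^2\le m(B_R)\cdot\frac{1}{m(B_R)}\int_{B_R}|F|\,dm\le CR^d\cdot\e_i,
\]
where $\e_i=C\cdot 2^{-(n_0+i)(d-2)/2}$ for $i>0$ and $\e_0\le C(rR)^{-(d-1)/2}$ for $i=0$. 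Combining, $|I_1|\lesssim 2^{(n_0+i)(1/10+\b d/2)}\e_i^{1/2}l_1^{-d/4}R^{d/2}/l_1^{-d/4}= 2^{(n_0+i)(1/10+\b d/2)}\e_i^{1/2}$.

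For $i>0$ and $d\ge3$, the exponent becomes $1/10+\b d/2-(n_0+i)(d-2)/4$; since $(d-2)/4\ge1/4$, choosing $\b$ small makes this $\le-1/20$. For $i=0$, the hypothesis~\eqref{equation:condition-L} is precisely what guarantees $rR\gtrsim 2^{11n_0/12+\b n_0}$ is large enough to push $\e_0^{1/2}$ below the size needed to absorb the $2^{n_0/10}$ factor and yield $|I_1|\lesssim 2^{-n_0/20}$. Summing $|I_1|+|I_2|$ and taking the $(2K)$-th root concludes the proof. The main obstacle will be the simultaneous tuning of $\b$ so that both the translation tail contribution $|I_2|$ and the $L^2$-estimate $|I_1|$ sit below $2^{-(n_0+i)/20}$ in both cases $i=0$ and $i>0$; the low-frequency case $i=0$ is the more delicate one and is precisely the reason for the technical lower bound~\eqref{equation:condition-L} on $rl_1^{1/2}$.
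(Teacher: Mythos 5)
Your proposal is correct and follows the paper's proof essentially verbatim: reduce to the $2K$-fold convolution via the spectral theorem (the paper uses the unit vector $\f_2=\rho_r(\eta^{*(2K)})\f/\|\rho_r(\eta^{*(2K)})\f\|_2$ rather than your Jensen step, but the content is the same), split the integral over $B_R$ and its complement with $R\asymp l_1^{1/2}2^{\b(n_0+i)}$, bound the tail by Markov and the second-moment estimate, and bound the bulk using the strong non-concentration estimate~\eqref{equation:strongNC} together with Proposition~\ref{proposition:largedimension}. The only cosmetic difference is that on $B_R$ you use an $L^2$--$L^2$ Cauchy--Schwartz pairing with $\|\eta^{*(2K)}\|_2\le\|\eta^{*(K)}\|_2$ (Young), whereas the paper uses an $L^\infty$--$L^1$ pairing with $\|\eta^{*(2K)}\|_\infty\le\|\eta^{*(K)}\|_2^2$; both yield the same power saving and the parameter tuning you sketch (e.g.\ $\b=1/20$) does go through.
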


\begin{proof}
We set $R=l_1^{1/2}2^{(n_0+i)/12}$ and estimate $\eta^{*(2K)}(g:v(g)>R)$.
By Markov's inequality and the second moment bound \eqref{equation:secondmoment},
\[
\mu^{*(l_1)}(\{g\in\Isom(\R^d):v(g)>R/(2K)-1\})\le C 2^{-(n_0+i)/6}.
\]
This in turn implies
\be\label{equation:outsidetheball}
\eta^{*(2K)}(\{g\in\Isom(\R^d):v(g)>R\})\le C 2^{-(n_0+i)/6}.
\ee
By the Cauchy-Schwartz inequality and \eqref{equation:strongNC}, we then have
\be\label{equation:eta-supnorm}
\|\eta^{*(2K)}\|_\infty\le C\d^{-1/10}l_1^{-d/2}=C2^{(n_0+i)/5}l_1^{-d/2}.
\ee
We take $\f_1=\f$ and $\f_2=\rho_r(\eta^{*(2K)})\f/\|\rho_r(\eta^{*(2K)})\f\|_2$ and 
use~\eqref{equation:eta-supnorm} and~\eqref{equation:outsidetheball}:
\begin{align*}
&\|\rho_r(\eta^{*(2K)})\f\|_2=|\langle\rho_r(\eta^{*(2K)})\f_1,\f_2\rangle|\\
&\qquad{}\le\int|\langle\rho_r(g)\f_1,\f_2\rangle|d\eta^{*(2K)}(g)\\ 
&\qquad{}\le C\int_{B_R}|\langle\rho_r(g)\f_1,\f_2\rangle|\cdot2^{(n_0+i)/5}l_1^{-d/2}\igap dm(g)
+C 2^{-(n_0+i)/6}.
\end{align*}

If $i=0$ then
$rR\ge 2^{n_0}$ by assumption \eqref{equation:condition-L}.
Note also that $m(B_R)\le CR^{d}=Cl_1^{d/2}2^{(n_0+i)d/12}$.
Then Proposition \ref{proposition:largedimension} yields:
\begin{align*}
\|\rho_r(\eta^{*(2K)})\f\|_2
&\le Cm(B_R) 2^{-(n_0+i)(d-2)/2}\cdot2^{(n_0+i)/5}l_1^{-d/2}+C 2^{-(n_0+i)/6}\\
&\le C 2^{(n_0+i)d/12-(n_0+i)(d-2)/2+(n_0+i)/5}+C 2^{-(n_0+i)/6}\\
&\le C 2^{-(n_0+i)/20}.
\end{align*}
This in turn gives the claim.
\end{proof}

Recall that $\eta= P_{\d,l_1}*\mu^{*(l_1)}* P_{\d,l_1}$,
hence  $\rho_r(\eta)=\rho_r(P_{\d,l_1})S_r^{l_1}\rho_r(P_{\d,l_1})$.
We will use the next lemma to show that $\rho_r(P_{\d,l_1})\f$ is very close to $\f$, and turn
\eqref{equation:eta-estimate} into an estimate on $S_r^{l_1}\f$.

\begin{lem}\label{lemma:delta}
Let $g\in B_{\d,l_1}$.
Then
\[
\|\rho_r(g)\f-\f\|_2\le C\d rl_1^{1/2}+C\d\cdot 2^{n_0+i}.
\]
\end{lem}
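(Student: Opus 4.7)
The plan is to decompose the difference into a translation-phase part and a rotation part and bound each separately.

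First I would write, using the definition of $\rho_r$,
\[
\rho_r(g)\f - \f = \bigl(\omega_r(g) - 1\bigr)\rho_0(g)\f + \bigl(\rho_0(g)\f - \f\bigr),
\]
so by the triangle inequality it suffices to bound the two pieces by $C\d r l_1^{1/2}\|\f\|_2$ and $C\d\cdot 2^{n_0+i}\|\f\|_2$ respectively. The first is immediate: since $|e(r\langle\xi,v(g)\rangle) - 1| \le 2\pi r|v(g)|$ pointwise on $S^{d-1}$ and $|v(g)| \le \d l_1^{1/2}$ for $g\in B_{\d,l_1}$, while $\rho_0(g)$ is unitary, we get $\|(\omega_r(g)-1)\rho_0(g)\f\|_2 \le 2\pi \d r l_1^{1/2}\|\f\|_2$.

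For the rotation part I would use the fact that every function in $\cL_i$ is a sum of spherical harmonics of degree at most $N := 2^{n_0+i}$. Write $\t(g) = \exp(A)$ with $A\in\so(d)$ and $\|A\| \le C\d$ (valid since $\dist(\t(g),1)\le \d$ and $\d$ is small). The one-parameter unitary group $\rho_0(\exp(tA))$ has infinitesimal generator $d\rho_0(A)$, so
\[
\rho_0(\t(g))\f - \f = \int_0^1 d\rho_0(A)\,\rho_0(\exp(tA))\f \, dt.
\]
Since $d\rho_0(A)$ is a first-order differential operator on $S^{d-1}$, its action on the degree-$k$ spherical harmonic subspace $\cH_k$ has operator norm at most $C\|A\|\cdot k$ (this is the standard estimate comparing the Casimir of $\so(d)$ to the Laplace--Beltrami operator, which has eigenvalue $k(k+d-2)$ on $\cH_k$). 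Since the spaces $\cH_k$ are $\rho_0$-invariant and mutually orthogonal, $d\rho_0(A)$ preserves the decomposition into degrees, and on $\cL_i$ it is bounded by $C\|A\|\cdot N \le C\d\cdot 2^{n_0+i}$ in operator norm. Integrating yields $\|\rho_0(g)\f - \f\|_2 \le C\d\cdot 2^{n_0+i}\|\f\|_2$, which combined with the first bound and $\|\f\|_2\le 1$ gives the lemma.

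The only step with any subtlety is the bound $\|d\rho_0(A)\restr{\cH_k}\| \le C\|A\|\cdot k$, but this is a standard representation-theoretic fact: $d\rho_0(A)$ is a linear combination of the basic angular-momentum operators $L_{ij} = x_i\partial_j - x_j\partial_i$, and the quadratic Casimir $\sum_{i<j} L_{ij}^2$ equals $-\D_{S^{d-1}}$, which acts on $\cH_k$ as multiplication by $k(k+d-2) \le Ck^2$. Thus each $L_{ij}$ has operator norm $O(k)$ on $\cH_k$, and the bound follows by linearity in $A$.
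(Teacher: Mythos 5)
Your proof is correct. The decomposition into a phase piece $(\omega_r(g)-1)\rho_0(g)\f$ and a rotation piece $\rho_0(g)\f-\f$, together with the bound $\|\omega_r(g)-1\|_\infty\le 2\pi r|v(g)|\le C\d r l_1^{1/2}$ on the former, is exactly what the paper does. Where you genuinely diverge is the rotation piece. The paper diagonalizes: it chooses a maximal torus $T\ni\t(g)$, decomposes $\cL_i$ into $T$-weight spaces $V_{w_1}\oplus\cdots\oplus V_{w_n}$, observes that $|w_k|\le C\,2^{n_0+i}$ because $\cL_i$ consists of restrictions of polynomials of degree at most $2^{n_0+i}$, and then bounds $\|\rho_0(g)\f-\f\|_2^2=\sum_k|1-e^{2\pi i\langle\log\t(g),w_k\rangle}|^2\|\psi_k\|_2^2\le C(2^{n_0+i}\d)^2$ directly. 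You instead differentiate along the one-parameter group $\rho_0(\exp(tA))$ and bound the generator $d\rho_0(A)$ on $\cH_k$ by $C\|A\|\,k$ via the Casimir/spherical-Laplacian eigenvalue $k(k+d-2)$. Both arguments are correct and of comparable length; the weight-space route is slightly more elementary and avoids invoking the Laplacian, while yours is somewhat more portable, since it only needs the Casimir eigenvalue rather than an explicit weight description. One small point to note in your version: the step $\|d\rho_0(A)\rho_0(\exp(tA))\f\|_2\le C\d\,2^{n_0+i}\|\f\|_2$ uses that $\rho_0(\exp(tA))$ preserves $\cL_i$ for every $t$, which holds because each $\cH_k$ is $\SO(d)$-invariant, so the operator-norm bound on $\cL_i$ indeed applies along the whole path.
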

\begin{proof}
We can write
\[
\|\rho_r(g)\f-\f\|_2\le\|\omega_r(g)\rho_0(g)\f-\rho_0(g)\f\|_2+\|\rho_0(g)\f-\f\|_2.
\]
The first term is bounded by
\[
\|\omega_r(g)\rho_0(g)\f-\rho_0(g)\f\|_2\le\|\omega_r(g)-1\|_\infty\le Cr|v(g)|\le C\d rl_1^{1/2}.
\]
Hence, it is left to estimate the second term.

Fix $g\in B_{\d,l_1}$ and let $T$ be a maximal torus in $\SO(d)$
which contains $\t(g)$, and denote by $\ft$ its Lie algebra.
We denote by $\Lambda$ the kernel of the exponential map on $\ft$, which is a lattice in $\ft$.
We decompose $\cL_i$ as the sum of weight spaces for $T$, that is we write
$\cL=V_{w_1}\oplus\ldots\oplus V_{w_n}$, where $w_1,\ldots,w_n\in \Lambda^*$ are the weights
of the representation $\rho_0$ on $\cL_i$ and
\[
\rho_0(\s)\psi=e^{2\pi i\langle\log \s,w_k\rangle}\psi
\]
for $\s\in T$ and $\psi\in V_k$.
The highest weight is of length $C 2^{n_0+i}$ in $\cL_i$,
since these functions are restrictions of polynomials of degree at most $2^{n_0+i}$.
Hence $|w_k|\le C2^{n_0+i}$.

We decompose $\f=\psi_1+\ldots+\psi_n$, where $\psi_k\in V_{w_k}$.
Then
\begin{align*}
\|\rho_0(g)\f-\f\|_2^2&=\sum_{k=1}^n|1-e^{2\pi i\langle\log \t(g),w_k\rangle}|^2\|\psi_k\|_2^2\\
&\le C(2^{n_0+i}\d)^2.
\end{align*}
This proves the lemma.
\end{proof}

\begin{lem}
Suppose that \eqref{equation:condition-l1} and \eqref{equation:condition-L} hold and
\be\label{equation:conditionL2}
rl_1^{1/2}\le 2^{n_0+i}.
\ee
Then
\be\label{equation:Srl1}
\|S_r^{l_1}\f\|_2\le  C 2^{-(n_0+i)/(80K)}.
\ee
\end{lem}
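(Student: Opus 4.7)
The plan is to transfer the estimate \eqref{equation:eta-estimate} on $\|\rho_r(\eta)\f\|_2$ to the desired estimate on $\|S_r^{l_1}\f\|_2$, by using that the two ``buffering'' factors of $\rho_r(P_{\d,l_1})$ in $\rho_r(\eta) = \rho_r(P_{\d,l_1})\,S_r^{l_1}\,\rho_r(P_{\d,l_1})$ move functions in $\cL_i$ very little.

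First, I would apply Lemma~\ref{lemma:delta} with our choice $\d=2^{-2(n_0+i)}$. Conditions~\eqref{equation:condition-l1} and~\eqref{equation:conditionL2} give $\d rl_1^{1/2}\le 2^{-2(n_0+i)}\cdot 2^{n_0+i}=2^{-(n_0+i)}$ and $\d\cdot 2^{n_0+i}=2^{-(n_0+i)}$, so for every $g\in B_{\d,l_1}$ we have $\|\rho_r(g)\f-\f\|_2\le C\,2^{-(n_0+i)}$. Integrating against $P_{\d,l_1}$ and writing $\psi:=\rho_r(P_{\d,l_1})\f$, I obtain
\[
\|\psi-\f\|_2\le C\,2^{-(n_0+i)}.
\]
Note also that $P_{\d,l_1}$ is symmetric (because $B_{\d,l_1}$ is invariant under $g\mapsto g^{-1}$), so $\rho_r(P_{\d,l_1})$ is selfadjoint, and hence
\[
\langle\rho_r(\eta)\f,\f\rangle=\langle S_r^{l_1}\psi,\psi\rangle.
\]

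Next I would exploit the positivity of $S_r$: by assumption \eqref{equation:symmetric} we have $S_r=\rho_r(\mu_0)^*\rho_r(\mu_0)\ge 0$, so $S_r^{l_1}$ admits a positive selfadjoint square root $A:=(S_r^{l_1})^{1/2}$ of norm at most $1$. Using the expansion
\[
\langle S_r^{l_1}\f,\f\rangle
=\langle S_r^{l_1}\psi,\psi\rangle
+\langle S_r^{l_1}(\f-\psi),\f\rangle
+\langle S_r^{l_1}\psi,\f-\psi\rangle,
\]
together with $\|\f-\psi\|_2\le C\,2^{-(n_0+i)}$, $\|S_r^{l_1}\|\le 1$, Cauchy--Schwarz, and the estimate \eqref{equation:eta-estimate} applied via $|\langle\rho_r(\eta)\f,\f\rangle|\le\|\rho_r(\eta)\f\|_2$, I would deduce
\[
\|A\f\|_2^2=\langle S_r^{l_1}\f,\f\rangle
\le C\,2^{-(n_0+i)/(40K)}+C\,2^{-(n_0+i)}
\le C\,2^{-(n_0+i)/(40K)}.
\]
Finally, since $\|A\|\le 1$, we get
\[
\|S_r^{l_1}\f\|_2=\|A\cdot A\f\|_2\le\|A\f\|_2\le C\,2^{-(n_0+i)/(80K)},
\]
which is exactly \eqref{equation:Srl1}.

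The only place where care is required is verifying that $P_{\d,l_1}$ is symmetric so that $\rho_r(P_{\d,l_1})$ is selfadjoint, and the use of the positive square root $A$; otherwise everything amounts to plugging the already-proven ingredients together with the triangle/Cauchy--Schwarz inequalities. The two conditions \eqref{equation:conditionL2} and the $\d=2^{-2(n_0+i)}$ choice are precisely calibrated so that the ``buffer error'' $C\,2^{-(n_0+i)}$ is absorbed by the main term $C\,2^{-(n_0+i)/(40K)}$.
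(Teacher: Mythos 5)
Your proof is correct and follows essentially the same route as the paper's: apply Lemma~\ref{lemma:delta} with $\d=2^{-2(n_0+i)}$ and \eqref{equation:conditionL2} to control $\|\rho_r(P_{\d,l_1})\f-\f\|_2$, transfer \eqref{equation:eta-estimate} to $\langle S_r^{l_1}\f,\f\rangle$ via the triangle inequality and self-adjointness of $\rho_r(P_{\d,l_1})$, and then upgrade to $\|S_r^{l_1}\f\|_2$ using the positivity of $S_r$. Your explicit introduction of the square root $A=(S_r^{l_1})^{1/2}$ is just a repackaging of the paper's inequality $\|S_r^{l_1}\f\|_2^2=\langle S_r^{2l_1}\f,\f\rangle\le\langle S_r^{l_1}\f,\f\rangle$.
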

\begin{proof}
Recall that $\d=2^{-2(n_0+i)}$.
Then Lemma \ref{lemma:delta} together with \eqref{equation:conditionL2} implies that
\[
\|\rho_r(g)\f-\f\|_2\le C 2^{-n_0-i}
\]
for $g\in B_{r,l_1}$, hence
\begin{equation}\label{equation:rhor Pdelta}
\|\rho_r(P_{\d,l_1})\f-\f\|_2\le  C 2^{-n_0-i}.
\end{equation}

By the trinagle inequality,
\begin{align*}
|\langle S_r^{l_1}\f,\f\rangle|
\le& |\langle S_r^{l_1}\rho_r(P_{\d,l_1})\f,\rho_r(P_{\d,l_1})\f\rangle|\\
&+|\langle S_r^{l_1}\rho_r(P_{\d,l_1})\f,\f-\rho_r(P_{\d,l_1})\f\rangle|
+|\langle S_r^{l_1}(\f-\rho_r(P_{\d,l_1})\f),\f\rangle|\\
\le& |\langle\rho_r(\eta)\f,\f\rangle|+2\|\f-\rho_r(P_{\d,l_1})\f\|_2\\
\le& C 2^{-(n_0+i)/(40K)}.
\end{align*}
For the last inequality, we used  \eqref{equation:eta-estimate} and \eqref{equation:rhor Pdelta}.

We observe that $S_r$ is a positive self-adjoint operator of norm at most $1$ owing to the assumption
\eqref{equation:symmetric}.
Thus
\[
\|S_r^{l_1}\f\|_2^2=\langle S_r^{l_1}\f,S_r^{l_1}\f\rangle=\langle S_r^{2l_1}\f,\f\rangle
\le\langle S_r^{l_1}\f,\f\rangle,
\]
which proves the claim.
\end{proof}

We show how to set the parameters $l_1$ and $L$ in such a way that the conditions
imposed on these parameters, namely \eqref{equation:condition-l1},
\eqref{equation:condition-L} and~\eqref{equation:conditionL2} hold.
There are two cases depending on the size of $r$.
\begin{lem}\label{lemma:defs}
Let $A$ be a number and set
\[
L:=A\cdot\left\{
\begin{array}{ll}
M^2(\log(M)+\log(r)+1)&\text{if $M^{-1}\le r$},\\
r^{-2}&\text{if $r< M^{-1}$}.
\end{array}
\right.
\]
and
\[
l_1:=10^4(L+A^{1/2}M^2i).
\]
If $A$ is sufficiently large depending only on $d$, then the conditions
\eqref{equation:condition-l1}, \eqref{equation:condition-L} and \eqref{equation:conditionL2} hold.
\end{lem}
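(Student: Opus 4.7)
The plan is to verify each of \eqref{equation:condition-l1}, \eqref{equation:condition-L}, \eqref{equation:conditionL2} by direct computation, splitting into the two regimes $r\ge M^{-1}$ and $r<M^{-1}$ that define $L$. The main input is the defining sandwich $100r\sqrt L\le 2^{n_0}\le 200r\sqrt L$ from \eqref{equation:n0}, combined with the decomposition $l_1=10^4 L+10^4 A^{1/2}M^2 i$. The constant $A$ will be chosen at the end, large depending only on $d$, so as to clear every absolute constant arising below.

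First I would record rough bounds on $n_0$. When $r\ge M^{-1}$, the inequality $rM\ge 1$ gives $\sqrt A\le r\sqrt L\le rM\sqrt A\,(\log M+\log r+1)^{1/2}$, so that $n_0\le C(\log(rM)+\log A+\log\log(rMA))$; when $r<M^{-1}$, one has $r\sqrt L=\sqrt A$ identically, so $n_0\le C\log A$. In both regimes $L\ge AM^2$, and this is what lets $n_0$ be absorbed once $A$ is large.

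For \eqref{equation:condition-l1}, use $\log\delta^{-1}=2(n_0+i)\log 2$: the $L$-part of $l_1$ contributes at least $10^4 AM^2$, dominating $CM^2 n_0$ (by the bounds on $n_0$, provided $A$ is large compared with the $d$-dependent constants), and the $i$-part contributes $10^4 A^{1/2}M^2 i$, dominating $CM^2 i$ as soon as $A\ge (C/10^4)^2$. For \eqref{equation:condition-L}, use $l_1\ge 10^4 L$ and $100r\sqrt L\le 2^{n_0}$ to see
\[
rl_1^{1/2}\,2^{n_0/12}\ge 100\,r\sqrt L\cdot 2^{n_0/12}\ge 2^{n_0+n_0/12-1}\ge 2^{n_0},
\]
provided $n_0\ge 12$; this last bound is equivalent to $100r\sqrt L\ge 2^{11}$, which follows from $r\sqrt L\ge \sqrt A$ once $A$ is large. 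For \eqref{equation:conditionL2}, subadditivity of the square root yields $l_1^{1/2}\le 100\sqrt L+100 A^{1/4}M\sqrt i$: the first piece gives $100r\sqrt L\le 2^{n_0}\le 2^{n_0+i}$, and for the second, using $2^{n_0}\ge 100rM\sqrt A$ (when $r\ge M^{-1}$) or $rM<1$ together with $2^{n_0}\ge 100\sqrt A$ (when $r<M^{-1}$), the required inequality $100rA^{1/4}M\sqrt i\le 2^{n_0+i}$ reduces to $\sqrt i\le A^{1/4}\cdot 2^i$, which is automatic for $i\ge 1$ as soon as $A\ge 1$.

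The main obstacle here is nothing structural, only bookkeeping: $A$ must be chosen large enough to handle all the above ``$A$ sufficiently large'' clauses simultaneously. Each is of the form $A\ge A_0(d)$, so any single value of $A$ exceeding the (finite) maximum of these thresholds works, and this common $A$ depends only on $d$ as required.
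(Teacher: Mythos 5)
The approach mirrors the paper's: direct verification of each condition, with the two regimes $r\ge M^{-1}$ and $r<M^{-1}$ dictated by the definition of $L$, and the sandwich $100r\sqrt L\le 2^{n_0}\le 200r\sqrt L$ as the basic tool. Your arguments for \eqref{equation:condition-L} and \eqref{equation:conditionL2} are essentially the paper's, and are correct modulo a small factor-of-two bookkeeping slip in \eqref{equation:conditionL2}: you bound $rl_1^{1/2}$ by the sum $100r\sqrt L+100rA^{1/4}M\sqrt i$ and show each summand is $\le 2^{n_0+i}$, but the sum could then be as large as $2^{n_0+i+1}$. This is harmless (one needs each summand $\le 2^{n_0+i-1}$, which holds with room to spare, or one can use the paper's cleaner route $l_1\le 10^4L(1+i)\le 10^4L\cdot 2^{2i}$), but as written the inequality is not yet closed.

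The real gap is in \eqref{equation:condition-l1}, specifically in the regime $r\ge M^{-1}$. You write that the $L$-part of $l_1$ ``contributes at least $10^4AM^2$, dominating $CM^2n_0$.'' But your own earlier bound, $n_0\le C(\log(rM)+\log A+\log\log(rMA))$, shows that $n_0$ is \emph{unbounded} as $r\to\infty$ with $M$ fixed, so the inequality $10^4A\ge Cn_0$ cannot hold uniformly in $r$ for any fixed $A$. To make the comparison work you must use the full size of $L$, namely $10^4L=10^4AM^2(\log M+\log r+1)$, whose $\log r$ growth matches that of $n_0$; this is exactly what the paper does, bounding $n_0\le CA^{1/2}(\log M+\log r+1)$ and concluding $CM^2n_0\le CA^{-1/2}\cdot 10^4L\le 10^4L$ once $A$ is large. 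Replacing the crude lower bound $L\ge AM^2$ by the exact expression for $L$ in that regime repairs the argument; in the other regime $r<M^{-1}$ your reasoning ($n_0\le C\log A$ so the comparison is immediate) is fine.
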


\begin{proof}
To establish \eqref{equation:condition-L}, we write
\[
rl_1^{1/2}2^{n_0/12}\ge100L^{1/2}r2^{n_0/12}.
\]
Recall the definition of $n_0$ in \eqref{equation:n0}, in particular that
\be\label{equation:n}
100rL^{1/2}\le 2^{n_0}\le 200 rL^{1/2}.
\ee
We see that \eqref{equation:condition-L} holds as long as  $n_0\ge 12$
that we can ensure by choosing $A$ large enough.

Inspecting the definition of $L$, we see that $L\ge AM^2\ge A^{1/2}M^2$ for all $r$.
Hence $l_1\le 10^4 L(i+1)\le 10^4 L \cdot 2^{2i}$ which yields
\[
rl_1^{1/2}\le 100r L^{1/2}\cdot 2^i\le 2^{n_0+i},
\]
which is precisely \eqref{equation:conditionL2}.

It remains to verify \eqref{equation:condition-l1}.
Recall that $\d=2^{-2(n_0+i)}$ by definition.
Hence \eqref{equation:condition-l1} would follow from
\[
l_1=10^4(L+A^{1/2}M^2i)\ge CM^2(n_0+i).
\]
We see that this condition holds for all
$i$, if it holds for $i=0$ and $A$ is sufficiently large.

We verify the condition for $i=0$ and consider the two ranges for $r$
separately.
First, we consider the case $M^{-1}\le r $.
Then
\begin{align*}
n_0&\le C+\log r + \frac{1}{2}\log L\\
&\le C+\log r +\log M + \log A +\log(\log  M+\log r +1)\\
&\le CA^{1/2}(\log  M+\log r +1),
\end{align*}
which implies  \eqref{equation:condition-l1} if $A$ is sufficiently large.

Second, let $r<M^{-1}$.
Then
\[
n_0\le C+\log r + \frac{1}{2}\log L
\le C +\log r +\log r^{-1}+\log A
\le CA^{1/2}.
\]
Thus
\[
M^2n_0\le CA^{1/2}r^{-2},
\]
which implies  \eqref{equation:condition-l1} if $A$ is sufficiently large.
\end{proof}

\begin{proof}[Proof of Proposition \ref{proposition:l0}]
We take the definitions of $L$ and $l_1$ from Lemma \ref{lemma:defs}, so in particular
\eqref{equation:Srl1} holds.
We observe that if $A$ is sufficiently large (depending on $d$), then $n_0$ will be bigger
than any number $C$ which depends only on $d$, (see \eqref{equation:n}).
Then \eqref{equation:Srl1} implies
\be\label{equation:Srl1-2}
\|S_r^{l_1}\f\|_2\le 2^{-(n_0+i)/(100K)}.
\ee

Now we fix a number $B$ that will be chosen sufficiently large and  set
\[
l_0=\left\{
\begin{array}{ll}
ABM^2&\text{if $r>e^B/M$,}\\
L&\text{otherwise.}
\end{array}
\right.
\]
We claim that
\be\label{equation:Srl0}
\|S_r^{l_0}\f\|_2\le\frac{1}{10},
\ee
if $A$ is sufficiently large depending on $d$ and $B$ is sufficiently large depending on $d$ and $A$.

{}From \eqref{equation:Srl1-2}, we have
\[
\|S_r^{l_0}\f\|_2\le 2^{-\frac{(n_0+i)l_0}{100K l_1}}.
\]
Hence to prove the claim, we only need to show that
\[
\frac{(n_0+i)l_0}{l_1}
\]
can be arbitrarily large with a suitable choice of $A$ and $B$.

By \eqref{equation:n} and the definition of $l_1$, we have
\begin{align*}
\frac{n_0+i}{l_1}&\ge\frac{\log r+(1/2)\log L+i}{10^4(L+A^{1/2}M^2i)}\\
&\ge\frac{1}{2\cdot10^4}\min\left\{\frac{2\log r+\log L}{L},\frac{1}{A^{1/2}M^2}\right\}.
\end{align*}
We observe that $L\ge AM^2$ for all $r$, hence $l_0\ge AM^2$ as long as $B\ge 1$.
Hence
\[
\frac{1}{A^{1/2}M^2}\cdot l_0\ge A^{1/2}
\]
is as large as we wish.
So it is left to show that
\[
\frac{2\log r+\log L}{L}\cdot l_0
\]
can be arbitrarily large, as well.

If $l_0=L$ this follows from the inequality $2\log r+\log L\ge \log A$.
In the opposite case $r\ge1/M$, hence
$L=AM^2(\log(M)+\log(r)+1)$, and $\log L\ge 2\log M +2$.
Then
\[
\frac{2\log r+\log L}{L}\cdot l_0
\ge\frac{2\log r+2\log M+2}{AM^2(\log(M)+\log(r)+1)}\cdot ABM^2
\ge2B,
\]
which can be arbitrarily large.
\end{proof}

\section{A more general form}
\label{section:general}

In this section we give a more general form of Theorem \ref{theorem:technical}, which
does not require the assumptions \eqref{equation:symmetric}--\eqref{equation:moment3}.
This is the form that will be used in the next two sections, and it is a rather straightforward
consequence of Theorem \ref{theorem:technical};
its proof consists of a series of simple observations, which reduces the general setting to
\eqref{equation:symmetric}--\eqref{equation:moment3}.

\begin{cor}\label{corollary:general}
Let $\mu$ be a probability measure on $\Isom(\R^d)$ for some $d\ge 3$,
and let $v_1,v_2\in\R^d$ be points 
such that
\[
N^2:=\int |g(v_1)-v_2|^2 \igap d\mu(g)
\]
is minimal.
Suppose that $N>0$ and
\[
\int |g(v_1)-v_2|^3 \igap d\mu(g)\le MN^3
\]
for some number $M<\infty$.
Then there is a number $c$ depending only on $d$ such that
\[
\|S_r\|\le 1- c\min\left\{(Nr)^{2},\frac{1-\|T\|_{L^2_0(\SO(d))}}{M^2}\right\}.
\]
\end{cor}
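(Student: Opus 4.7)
The plan is to reduce Corollary \ref{corollary:general} to Theorem \ref{theorem:technical} through normalization, symmetrization, iteration, and a final root extraction. First, push $\mu$ forward to $\bar\mu$ via $g\mapsto\bar g$ with $\bar g(y)=g(y+v_1)-v_2$. A short calculation using $\bar g(y)=\t(g)y+(g(v_1)-v_2)$ shows $S_r(\mu)=M_{v_2}S_r(\bar\mu)M_{-v_1}$, where $M_v\f(\xi):=e(r\langle\xi,v\rangle)\f(\xi)$ is a unitary multiplication operator, whence $\|S_r(\mu)\|=\|S_r(\bar\mu)\|$. The optimality of $(v_1,v_2)$ forces $\int v(g)\,d\bar\mu=0$, $\int|v(g)|^2\,d\bar\mu=N^2$, and $\int|v(g)|^3\,d\bar\mu\le MN^3$, while $\t(\bar g)=\t(g)$ keeps the operator $T$ and its spectral gap unchanged. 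Rescaling $\bar\mu$ by $1/N$ to a measure $\hat\mu$ satisfying \eqref{equation:centered}--\eqref{equation:moment3} gives $\|S_r(\mu)\|=\|S_{rN}(\hat\mu)\|$, so the task reduces to proving $\|S_\rho(\hat\mu)\|\le 1-c\min\{\rho^2,\e/M^2\}$ at $\rho=rN$, where $\e:=1-\|T\|_{L^2_0(\SO(d))}$.

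Next, set $\eta:=\check{\hat\mu}*\hat\mu$, which is of the form $\check\mu_0*\mu_0$ with $\check\eta=\eta$. After an inessential translation to re-center $\eta$ (preserving both the $\check\nu*\nu$ structure and all relevant norms via the unitary conjugation $S_\rho(\cdot)\mapsto M_{v_0}S_\rho(\cdot)M_{-v_0}$), the $C^*$-identity gives $\|S_\rho(\eta)\|=\|S_\rho(\hat\mu)\|^2$ and $\|T(\eta)\|_{L^2_0}=(1-\e)^2$. Choose an even integer $k$ of order $1/\e$ so that $(1-\e)^{2k}\le 1/2$; writing $k=2m$, the symmetry $\check\eta=\eta$ yields $\eta^{*(k)}=\check{\eta^{*(m)}}*\eta^{*(m)}$, so $\eta^{*(k)}$ satisfies both \eqref{equation:symmetric} and \eqref{equation:gap}, while positivity of $S_\rho(\eta)$ gives $\|S_\rho(\eta^{*(k)})\|=\|S_\rho(\hat\mu)\|^{2k}$. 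After rescaling $\eta^{*(k)}$ by a factor $\l$ of order $\sqrt k$ to normalize the second moment, one must check that the third moment remains $O(M)$. Writing $v(h_1\cdots h_k)=\sum_i\t(h_1)\cdots\t(h_{i-1})v(h_i)$ produces a martingale difference sum by centeredness of $\eta$, and Burkholder's inequality together with Minkowski's integral inequality give $\E|v(h_1\cdots h_k)|^3\le Ck^{3/2}\E_\eta|v|^3\le C'k^{3/2}M$; rescaling by $\l^3\asymp k^{3/2}$ returns this to $O(M)$. Theorem \ref{theorem:technical} then yields $\|S_\rho(\eta^{*(k)})\|\le 1-c\min\{\rho^2 k,M^{-2}\}$.

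Combining this with $\|S_\rho(\eta^{*(k)})\|=\|S_\rho(\hat\mu)\|^{2k}$ and the elementary inequality $(1-x)^{1/(2k)}\le 1-x/(2k)$ valid for $x\in[0,1]$, one obtains $\|S_\rho(\hat\mu)\|\le 1-(c/(2k))\min\{\rho^2 k,M^{-2}\}=1-c'\min\{\rho^2,\e/M^2\}$ since $k\asymp 1/\e$; unfolding $\rho=rN$ recovers the statement of the corollary. The hardest step is the third-moment estimate: a naive triangle-inequality bound would give only $\E|v|^3=O(k^3M)$, which after rescaling would inflate the effective $M$ by a factor of $k^{3/2}$ and wipe out the iteration gain entirely. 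It is the sharp $k^{3/2}M$ scaling produced by the martingale/Burkholder--Marcinkiewicz--Zygmund estimate that makes the whole reduction work.
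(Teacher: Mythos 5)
Your overall reduction scheme—normalize, symmetrize, iterate to create spectral gap, rescale, apply Theorem~\ref{theorem:technical}, take a $1/(2k)$-th root—is the same as the paper's, but the order in which you normalize and symmetrize creates a genuine gap that you do not address.

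The problem is the ``inessential translation to re-center $\eta$.'' It is not inessential. You center $\hat\mu$ at the optimal pair $(v_1,v_2)$ before symmetrizing, but $\eta=\check{\hat\mu}*\hat\mu$ is then not centered: $\int v(g)\,d\eta = -\E_{\hat\mu}[\t(g)^{-1}v(g)]$, which is generically nonzero even though $\E_{\hat\mu}[v(g)]=0$. You must therefore conjugate $\eta$ by the translation $\tau_{x_0}$ to the fixed point $x_0=(I-\t(\eta))^{-1}\E_\eta[v]$ in order for the martingale/Burkholder step to apply at all. This unitary conjugation preserves $\|S_\rho(\cdot)\|$, as you say, but it does \emph{not} preserve the moments you just normalized: the second moment of $\eta':=\tau_{x_0}^{-1}\eta\tau_{x_0}$ is $\int|(\t(g)-I)x_0 + v(g)-\E_\eta v|^2\,d\eta$, and the third moment picks up a contribution of size up to $|x_0|^3$, where $|x_0|$ can be as large as $(1-\|T\|_{L^2_0})^{-1}$. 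Your inequality chain $\E|v(h_1\cdots h_k)|^3\le Ck^{3/2}\E_\eta|v|^3\le C'k^{3/2}M$ silently conflates the uncentered $\eta$ (for which $\E_\eta|v|^3\le 8M$ holds) with the centered $\eta'$ (which is what Burkholder needs, and for which no such bound has been established). Likewise the assertion $\l\asymp\sqrt k$ presupposes that the second moment of $\eta'$ is of order one; this lower bound is exactly the paper's step $N_1\ge N$ and is not automatic.

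The paper avoids both difficulties by \emph{symmetrizing first and re-invoking minimality after the re-centering}. Concretely: it sets $\mu_1=\check\mu*\mu$, conjugates so that the fixed point of $\mu_1$ is the origin (this changes $\mu$ but not the minimum value $N$ or the ratio $M$), and then bounds $N_1^2=\int|v(g)|^2\,d\mu_1\ge N^2$ by testing the minimality at $(0,g_2(0))$, and bounds $\int|v(g)|^3\,d\mu_1^{*(2)}\le 64MN^3$ by writing the translation part of a product as a difference $|g_1(0)-g_2(0)|$ and passing through $v_1$ with the convexity inequality $(a+b)^3\le 4(a^3+b^3)$. Note the paper deliberately bounds the third moment of $\mu_1^{*(2)}$, not of $\mu_1$, precisely because the third moment of $\mu_1$ at the re-centered origin is \emph{not} controlled by $MN^3$ in general. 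To repair your argument you would need to either follow this scheme (bound moments of $\eta'^{*(2)}$ rather than $\eta'$, using the $(v_1,v_2)$ minimality again), or else normalize at the fixed point of $\check\mu*\mu$ from the very start so that the symmetrized measure is centered without any further translation; centering at $(v_1,v_2)$ and then symmetrizing loses the minimality information at the moment you need it.
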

Here $T$ is the averaging operator on $L^2_0(\SO(d))$ given by~\eqref{equation:defT}.

In the next lemma we record some simple facts about the growth of the $m$th moments
of the translation part of a product of $l$ independent isometries.
This follows easily from some general inequalities of Burkholder on martingales.
This lemma will be used only for $m=2,3$.

\begin{lem}\label{lemma:moments}
Let $Z_1,Z_2,\ldots\in\Isom(\R^d)$ be a sequence of independent
(not necessarily identically distributed) random isometries.
Suppose that $\E(Z_i(0))=0$ for all $i\ge1$.
Then
\be\label{equation:second}
\E(|Z_1\cdots Z_l(0)|^2)=\sum_{i=1}^l\E[|Z_i(0)|^2]
\ee
and
\be\label{equation:mth}
\E(|Z_1\cdots Z_l(0)|^m)^{2/m}
\le C_m\sum_{i=1}^l\left(\E[|Z_i(0)|^m]\right)^{2/m},
\ee
for any $m\ge 2$ and integer $l\ge1$, where $C_m$ is a number depending only on $m$ and $d$.
\end{lem}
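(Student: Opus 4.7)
The plan is to write the product of isometries as a sum and identify a martingale-difference structure, after which the two claims become standard martingale estimates.

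First I would unpack the product using the semidirect-product structure. Writing $Z_i = (v_i,\theta_i) \in \R^d \rtimes \SO(d)$ with $v_i = Z_i(0)$, the composition rule gives
\[
Z_1\cdots Z_l(0) = \sum_{i=1}^{l} \theta_1\cdots\theta_{i-1} v_i =: \sum_{i=1}^{l} X_i,
\]
where by convention $\theta_1\cdots\theta_0 = \Id$, so $X_1 = v_1$. Let $\cF_i = \sigma(Z_1,\ldots,Z_i)$. Since $Z_i$ is independent of $\cF_{i-1}$ and $\E[v_i] = \E[Z_i(0)] = 0$, conditioning on $\theta_1,\ldots,\theta_{i-1}$ yields
\[
\E[X_i \mid \cF_{i-1}] = \theta_1\cdots\theta_{i-1}\,\E[v_i] = 0,
\]
so $(X_i)$ is an $\R^d$-valued martingale-difference sequence. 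Moreover, since the $\theta_j$ are orthogonal, $|X_i| = |v_i|$ pointwise.

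For the identity \eqref{equation:second}, expand $|\sum X_i|^2 = \sum_i |X_i|^2 + 2\sum_{i<j}\langle X_i, X_j\rangle$ and take expectations: the cross terms vanish because $\E[\langle X_i,X_j\rangle \mid \cF_{j-1}] = \langle X_i, \E[X_j\mid\cF_{j-1}]\rangle = 0$ for $i<j$, and $\E|X_i|^2 = \E|v_i|^2$. Hence $\E|\sum X_i|^2 = \sum_i \E|v_i|^2$.

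For \eqref{equation:mth}, I would invoke the vector-valued Burkholder inequality (which follows coordinatewise from the scalar case or directly from \cite{Bur-martingale}-type results): for $m\ge 2$,
\[
\E\Bigl[\,\Bigl|\sum_{i=1}^l X_i\Bigr|^m\Bigr]^{1/m}
\;\le\; C_m\,\E\Bigl[\Bigl(\sum_{i=1}^l |X_i|^2\Bigr)^{m/2}\Bigr]^{1/m}.
\]
Then Minkowski's inequality in $L^{m/2}$ (valid because $m/2\ge 1$) applied to the non-negative random variables $|X_i|^2$ gives
\[
\E\Bigl[\Bigl(\sum_{i=1}^l |X_i|^2\Bigr)^{m/2}\Bigr]^{2/m}
\;\le\; \sum_{i=1}^l \bigl(\E|X_i|^m\bigr)^{2/m}
\;=\; \sum_{i=1}^l \bigl(\E|v_i|^m\bigr)^{2/m}.
\]
Combining the two displays and squaring the outer exponent yields the claimed bound with constant $C_m^2$ (which depends only on $m$, and on $d$ only through the coordinatewise reduction used in Burkholder's inequality).

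The only mild subtlety is justifying the vector-valued Burkholder inequality; this is routine (apply the scalar inequality to each coordinate of $\sum X_i$ and use $|v|^2 = \sum_k v^{(k)2}$ together with the equivalence of $\ell^2$ and $\ell^m$ norms in $\R^d$), so no step is a genuine obstacle.
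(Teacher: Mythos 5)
Your proof is correct and follows essentially the same route as the paper: decompose $Z_1\cdots Z_l(0)$ via the semidirect-product structure into the martingale-difference sum $\sum_i \theta_1\cdots\theta_{i-1}v_i$, then apply Burkholder's inequality followed by Minkowski in $L^{m/2}$. The only bookkeeping difference is that you package the coordinatewise reduction as a ``vector-valued Burkholder'' step and then apply Minkowski once, whereas the paper applies scalar Burkholder and Minkowski coordinate by coordinate and only aggregates over coordinates at the end using the comparison of $\ell^p$ norms on $\R^d$; the ingredients and constants are the same.
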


The lemma is just a more explicit form of \cite{Var-Euclidean}*{Lemma 35}, but we give
the proof for completeness.

\begin{proof}
For a vector $v\in\R^d$, we write $v[1],\ldots,v[d]$ for its $d$ coordinates.
We consider the sequence of random vectors
\[
V_l=Z_1\cdots Z_l(0)=v(Z_1)+\t(Z_1)v(Z_2)
+\ldots+\t(Z_1)\cdots\t(Z_{l-1})v(Z_l).
\]
Since $Z_l$ is independent of $Z_1,\ldots,Z_{l-1}$ and $\E[v(Z_l)]=0$,
it follows that
\[
\E[\t(Z_1)\cdots\t(Z_{l-1})v(Z_l)|Z_1,\ldots,Z_{l-1}]=0.
\]
Thus $\E[V_l|V_{l-1}]=V_{l-1}$, and the coordinate functions $V_l[k]$ form martingales
for all $1\le k\le d$.

For fixed $k$, the random variables $V_1[k],V_2[k]-V_1[k],\ldots,V_l[k]-V_{l-1}[k]$ are orthogonal in the
$L^2$ space of the underlying probability space.
This implies the first claim \eqref{equation:second}.

The second claim depends on Burkholder's inequality \cite{Bur-martingale}.
By \cite[Theorem 3.2]{Bur-martingale}, we have
\[
\E[|V_l[k]|^m]\le C_m\E\left[\left(\sum_{i=1}^l|V_i[k]-V_{i-1}[k]|^2\right)^{m/2}\right],
\]
where $C_m$ is a constant depending only on m.
By Minkowski's inequality, we have
\be\label{equation:coordinateineq}
\E[|V_l[k]|^m]^{2/m}\le C_m\sum_{i=1}^l\E[|V_i[k]-V_{i-1}[k]|^m]^{2/m}.
\ee

For any vector $v\in\R^d$ and numbers $0<s\le t$, we have
\[
\left(\sum_{k=1}^d |v[k]|^t\right)^{1/t}\le\left(\sum_{k=1}^d |v[k]|^s\right)^{1/s}
\le d^{1/s-1/t}\left(\sum_{k=1}^d |v[k]|^t\right)^{1/t}.
\]
We sum the $m/2$th power of \eqref{equation:coordinateineq} for $k=1,\ldots,d$ and use the above
inequality several times:
\begin{align*}
&d^{1-m/2}\E[|V_l|^m]\le\sum_{k=1}^d\E[|V_l[k]|^m]\\
&\qquad{}\le C_m^{m/2}\sum_{k=1}^d\left(\sum_{i=1}^l\E[|V_i[k]-V_{i-1}[k]|^m]^{2/m}\right)^{m/2}\\
&\qquad{}\le C_m^{m/2}\left(\sum_{i=1}^l\sum_{k=1}^d\E[|V_i[k]-V_{i-1}[k]|^m]^{2/m}\right)^{m/2}\\
&\qquad{}\le  C_m^{m/2}d^{m/2-1}\left(\sum_{i=1}^l
\left(\sum_{k=1}^d\E[|V_i[k]-V_{i-1}[k]|^m]\right)^{2/m}\right)^{m/2}\\
&\qquad{}\le  C_m^{m/2}d^{m/2-1}\left(\sum_{i=1}^l\E[|V_i-V_{i-1}|^m]^{2/m}\right)^{m/2}
\end{align*}
Upon taking $2/m$th power of both end, we get \eqref{equation:mth}.
\end{proof}

\begin{proof}[Proof of Corollary \ref{corollary:general}]
We write $\mu_1:=\check \mu*\mu$.
The Corollary is vacuous if $T$ has no spectral gap;
hence there are no two units vectors $f_1, f_2$ such that $\mu$-almost surely $\theta(g)f_1=f_2$.
It follows that $\norm{\int \theta(g) \igap d \mu_1(g)}<1$
hence there is a unique point $x_0 \in \R^d$
such that
$\int g(x_0)\igap d \mu_1(g)=x_0$; cf. \cite{Var-Euclidean}*{Lemma 20} for more details. 
Without loss of generality, we can
assume that $x_0=0$ by conjugating $\mu$ by an isometry that maps $x_0$ to $0$.
Note this conjugation
does not change the norm of $\rho_r(\mu)$ nor
the values of $N$ and $M$ in the assumptions.

We write
\[
N_1^2:=\int|v(g)|^2\igap d\mu_1(g)=\int|g_1(0)-g_2(0)|^2\igap d\mu(g_1)d\mu(g_2)\ge N^2.
\]
In addition,
\begin{align*}
\int|g(v_1)|^3\igap d\mu_1(g)&=\int|g_1(v_1)-g_2(v_1)|^3\igap d\mu(g_1)d\mu(g_2)\\
&\le8\int|g_1(v_1)-v_2|^3\igap d\mu(g_1)\le 8MN^3.
\end{align*}
Moreover
\begin{align*}
\int|v(g)|^3\igap d\mu_1^{*(2)}(g)&=\int|g_1(0)-g_2(0)|^3\igap d\mu_1(g_1)d\mu_1(g_2)\\
&\le 8\int|g_1(0)-v_1|^3\igap d\mu_1(g_1)\\
&= 8\int|g_1^{-1}(v_1)-0|^3\igap d\mu_1(g_1)\le 64MN^3.
\end{align*}

We put $l=\lceil (1-\|T\|_{L_0^2(\SO(d))})^{-1}\rceil+1$ and
$\mu_2:=\mu_1^{*(2l)}$.
We apply Lemma \ref{lemma:moments} and get
\[
\int |v(g)|^2\igap d\mu_2(g)=2l N_1^2
\]
and
\[
\int |v(g)|^3\igap d\mu_2(g)\le Cl^{3/2} MN^3\le CM(l^{1/2}N_1)^3.
\]

We consider the map $\Phi:\Isom(\R^d)\to\Isom(\R^d)$, which does not change the
rotation part of the isometry and dilates the translation part in accordance with
$v(\Phi(g))=v(g)/(2l)^{1/2}N_1$.
We define $\mu_3$ to be the pushforward of $\mu_2$ via $\Phi$.
Then
\[
\int |v(g)|^2\igap d\mu_3(g)=\int |v(\Phi(g))|^2\igap d\mu_2(g)=1
\]
and
\[
\int |v(g)|^3\igap d\mu_3(g)= \int|v(\Phi(g))|^2\igap d\mu_2(g)\le CM.
\]

We see that assumptions \eqref{equation:symmetric} and \eqref{equation:centered}--\eqref{equation:moment3}
hold for $\mu_3$ in place of $\mu$.
Moreover,
\[
\|\cR_0(\t(\mu_3))\|=\|\cR_0(\t(\mu_2))\|=\|\cR_0(\t(\mu_1))\|^{2l}=\|\cR_0(\t(\mu))\|^{4l}\le1/2
\]
by the choice of $l$.
(Recall that $\cR_0$ denotes the regular representation of $\SO(d)$ restricted to $L_0^2(\SO(d))$.)
Thus \eqref{equation:gap} also holds for $\mu_3$.

Then we can apply Theorem \ref{theorem:technical}, and get
\[
\|\rho_r(\mu_3)\|\le 1-c\min\{r^2, M^{-2}\}.
\]
Thus
\[
\|\rho_r(\mu_2)\|=\|\rho_{(2l)^{1/2}N_1r}(\mu_3)\|\le1-c\min\{2l(Nr)^2, M^{-2}\}.
\]
Finally, we note that
\[
\|S_r\|=\|\rho_r(\mu)\|=\|\rho_r(\mu_1)\|^{1/2}=\|\rho_r(\mu_2)\|^{1/4l}.
\]
This proves the corollary by the choice of $l$.
\end{proof}

\section{Random walks}
\label{section:rwproof}

The purpose of this section is to prove Theorem \ref{theorem:RW}.
This is relatively easy using Theorem \ref{theorem:technical} and the results of the paper \cite{Var-Euclidean}.

We denote by $\mu$ the law of $X_1$.
Then  the law of $Y_l$ is $\nu_l:=\mu^{*(l)}.\d_{x_0}$.
The proof is based on Plancherel's formula:
\[
\E[f(Y_l)]=\int_{\R^d}\wh f(\xi)\wh\nu_l(\xi)\igap d\xi.
\]
In this section, the constants $c,C$ may also depend on $\mu$ in addition to $d$.

As we have already noted in Section \ref{section:notation}, we can
understand $\wh\nu_l$ using the operators $S_r$.
In particular, we have the identity
\[
\Res_r(\wh\nu_l)=S_r(\Res_r\wh\nu_{l-1}).
\]
Using this and Theorem \ref{theorem:technical} iteratively, we can estimate $\wh\nu_l$ from above.
For high frequencies, this is sufficient and will yield the second error term in the theorem.

For low frequencies, we need more precise information, and we obtain this from
\cite{Var-Euclidean}*{Proposition 19}.
We check that the conditions of that proposition hold.
First we note that by a suitable choice of the origin
(see \cite{Var-Euclidean}*{Lemma 20}), we can assume that condition \eqref{equation:centered}
holds, which is denoted by (C) in the paper \cite{Var-Euclidean}.
In that paper, $K$ denotes the closure of the group generated by $\supp(\t(X_1))$, so $K=\SO(d)$
in our setting.
Otherwise, the operator $T$ defined in \eqref{equation:defT} would have many eigenfunctions with eigenvalue 1
contradicting to the spectral gap assumption.
Then $K$ is a semisimple group and its action fixes only the origin in $\R^d$, hence the condition (SSR)
is satisfied with the notation of \cite{Var-Euclidean}.
The condition (E) of that paper is also verified easily.

Write
\[
\psi_r(\xi):=e(r\langle\xi,x_0\rangle)=\Res_r(\wh\d_{x_0}).
\]
Then \cite{Var-Euclidean}*{Proposition 19} shows that there is a number $\s>0$ depending on $\mu$
such that
\be\label{equation:lowfreq}
\|S_r^l\psi_r-e^{-r^2l\s}\|_2<C(r^{\a-2}+|x_0|^2r^2)\cdot(e^{-clr^2}+r^{10d}).
\ee

There is a centrally symmetric Gaussian random variable $Z$ such that
\[
\E(e(\langle\xi,Z\rangle))=e^{-|\xi|^2\s}.
\]
Then by Plancherel's formula,
\[
\E[f(Y_l)]-\E[f(\sqrt l Z)]
=\int_{\R^d}\wh f(\xi)(\wh\nu_l(\xi)-e^{-|\xi|^2l\s})\igap d\xi.
\]
We estimate this integral first on the region $|\xi|<l^{-1/3}$.
We use $|\wh f(\xi)|\le\|f\|_1$ and write the integral in polar coordinates:
\begin{align*}
&\int_{|\xi|<l^{-1/3}}\wh f(\xi)(\wh\nu_l(\xi)-e^{-|\xi|^2l\s})\igap d\xi\\
&\qquad{}\le C\|f\|_1\cdot\int_{0}^{l^{-1/3}}r^{d-1}\|\Res_r(\wh\nu_l)-e^{-r^2l\s}\|_1\igap dr\\
&\qquad{}\le C\|f\|_1\cdot\int_{0}^{l^{-1/3}}r^{d-1}\|S_r^l\psi_r-e^{-r^2l\s}\|_2\igap dr\\
&\qquad{}\le C\|f\|_1\cdot\int_{0}^{l^{-1/3}}(r^{d+\a-3}+|x_0|^2r^{d+1})\cdot(e^{-clr^2}+r^{10d})\igap dr\\
&\qquad{}\le C\|f\|_1\cdot (l^{-\frac{d+\a-2}{2}}+|x_0|^2l^{-\frac{d+2}{2}}).
\end{align*}
We recognize the last expression as the first error term in Theorem \ref{theorem:RW}.

We note that
\[
\int_{|\xi|>l^{-1/3}}|\wh f(\xi)e^{-|\xi|^2l\s}|\igap d\xi
\le C\|f\|_1\cdot e^{-\s l^{1/3}}
\]
is bounded by the first error term, hence
it remains to show that
\be\label{equation:toshow}
\int_{|\xi|>l^{-1/3}}|\wh f(\xi)\wh\nu_l(\xi)|\igap d\xi
\le Ce^{-cl}\|f\|_{W^{2,(d+1)/2}}+C l^{-\frac{d+\a-2}{2}}\|f\|_1.
\ee

To this end, we estimate $\|S_r^l\|$ using Theorem \ref{theorem:technical}.

\begin{lem}\label{lemma:RW}
Suppose that the assumptions of Theorem \ref{theorem:RW} hold.
Then there is an integer $l_1$ depending only on the law of $X_1$ such that $\|S_r^{l_1}\|<1-\min\{r^2,1/2\}$.
\end{lem}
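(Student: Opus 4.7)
The plan is to derive Lemma \ref{lemma:RW} from Corollary \ref{corollary:general} by a simple iteration.  If the corollary applies to $\mu^{*(2)}$, it yields $\|S_r^2\|\le 1-c\min\{(Nr)^2,K\}$ for some constants $c,N,K>0$ depending only on the law of $X_1$.  Taking $l_1=2l_2$ and using $\|S_r^{l_1}\|\le\|S_r^2\|^{l_2}$, an elementary calculation based on $(1-x)^n\le e^{-nx}$ together with $e^{-2x}\le 1-x$ for $x\in[0,1/2]$ gives a finite $l_1$ depending only on $c,N,K$ such that $\|S_r^{l_1}\|<1-\min\{r^2,1/2\}$ for all $r\ge 0$.

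The main work, and the hardest part, is to verify the hypotheses of Corollary \ref{corollary:general} for $\mu^{*(2)}$.  The spectral gap of $T$ passes immediately from $\mu$ to $\mu^{*(2)}$.  The nondegeneracy condition $N>0$ requires that no pair $v_1,v_2$ satisfies $g(v_1)=v_2$ for $\mu^{*(2)}$-a.e.\ $g$; I would argue this by unfolding $g=g_2g_1$, observing first that the injectivity of isometries forces $g_1(v_1)$ to be $\mu$-a.e.\ equal to some fixed $w_1$, and then that the residual relation $g_2(w_1)=v_2$ holding $\mu$-a.e.\ gives either a common fixed point of $\mu$ (contradicting the theorem's hypothesis) or a rotational constraint $\t(g)(w_1-v_1)=v_2-w_1$ forcing $\t(\mu)$ to concentrate on a proper coset in $\SO(d)$---which contradicts the density of $\supp\t(\mu)$ implied by the spectral gap of $T$.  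The passage from $\mu$ to $\mu^{*(2)}$ is essential, since $\mu$ itself may admit a pair $v_1\ne v_2$ with $g(v_1)=v_2$ for $\mu$-a.e.\ $g$, giving $N=0$.

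The remaining subtlety is the third-moment hypothesis, which may fail for $\mu^{*(2)}$ when $\alpha<3$.  I would handle this by truncation: replace $\mu^{*(2)}$ with its restriction $\tilde\mu_R$ to $\{g:|v(g)|\le R\}$, suitably renormalized, with $R$ large enough that $\mu^{*(2)}(\{|v(g)|>R\})\le 1/2$.  Then $\tilde\mu_R$ has all moments finite and inherits the spectral gap and $N>0$ property by total-variation continuity.  Writing $S_r^2=p\,\rho_r(\tilde\mu_R)+(1-p)\,\rho_r(\hat\mu_R)$ with $p\ge 1/2$ (where $\hat\mu_R$ is the other part, renormalized), one transfers the corollary's bound from $\tilde\mu_R$ back to $\mu^{*(2)}$, losing at most a factor $1/2$ in the constant $c$.
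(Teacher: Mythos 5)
Your overall architecture is the same as the paper's: pass to a convolution power of $\mu$, truncate to a ball to secure a finite third moment, apply Corollary \ref{corollary:general}, and then transfer the bound back to $S_r$ via a convex combination of operators. The final iteration step (passing from $\|S_r^{l_0}\|\le 1-c\min\{\cdots\}$ to $\|S_r^{l_1}\|<1-\min\{r^2,1/2\}$) is also correct. However, your choice of $\mu^{*(2)}$ as the convolution power has a genuine gap.

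The spectral gap assumption in Theorem \ref{theorem:RW} only says that $\|T^l\|_{L^2_0(\SO(d))}<1$ for \emph{some} integer $l$; it does not say $\|T\|_{L^2_0}<1$, and hence does not say $\|T^2\|_{L^2_0}=\|\cR_0(\t(\mu^{*(2)}))\|<1$. Indeed this can fail: for $\mu$ whose rotation part is supported on two algebraic matrices generating a dense free subgroup, Bourgain--Gamburd give spectral gap in the paper's sense, yet $\|T\|^2=\frac{1}{2}+\frac{1}{4}\|\cR_0(\s_1^{-1}\s_2)+\cR_0(\s_2^{-1}\s_1)\|=1$ as soon as $\s_1^{-1}\s_2$ has dense powers. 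Corollary \ref{corollary:general} needs the operator norm $\|T\|_{L^2_0}$ of the input measure to be strictly less than $1$, and this is exactly what can fail for (truncations of) $\mu^{*(2)}$. Your appeal to total-variation continuity to say that $\tilde\mu_R$ ``inherits the spectral gap'' thus starts from a premise that may be false.

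The paper handles this by first choosing $l_0$ so that $\|T^{l_0}\|_{L^2_0}<1/4$ (possible precisely because of the spectral gap hypothesis), and then truncating $\mu^{*(l_0)}$ to a ball $B_R$ of mass $>3/4$. Writing $\cR_0(\t(\mu^{*(l_0)}))=(3/4)\cR_0(\t(\mu_0))+(1/4)X$ with $\|X\|\le 1$, one gets $\|\cR_0(\t(\mu_0))\|\le 2/3<1$, so the truncated measure fed into Corollary \ref{corollary:general} genuinely has a quantitative gap. Note also that your truncation threshold $\mu^{*(2)}(\{|v(g)|>R\})\le 1/2$ is too lax: to preserve any gap you would need the mass lost to truncation to be smaller than $1-\|T^2\|$, which is uncontrolled. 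So the fix is: replace $\mu^{*(2)}$ by $\mu^{*(l_0)}$ with $l_0$ adapted to the spectral gap constant, and choose $R$ relative to that.

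Your unfolding argument for the nondegeneracy $N>0$ has the right spirit, but the paper sidesteps it by simply noting that for $R$ sufficiently large no point is fixed by $\mu_0$-almost every isometry; once the hypothesis ``no common fixed point of $\mu$'' and the density of $\supp\t(\mu)$ (which follows from spectral gap) are in hand, this is a soft argument and does not need the detailed rotational-coset analysis you propose.
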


\begin{proof}[Proof of Lemma \ref{lemma:RW}]
First we choose an integer $l_0$ such that $\|T^{l_0}\|_{L^{2}_0}<1/4$.
This is possible by the assumption that $T$ has spectral gap.
Then we fix a number $R$ and denote by $B_R$ the set of isometries $g$ with $|v(g)|\le R$.
We choose $R$ in such a way that $\mu^{*(l_0)}(B_R)>3/4$, which holds if $R$ is sufficiently large depending on $\mu$
and $l_0$.
We write $\mu_0$ for the restriction of $\mu^{*(l_0)}$ to $B_R$ renormalized to be a probability measure,
that is
\[
\mu_0(A):=\frac{\mu^{*(l_0)}(A\cap B_R)}{\mu^{*(l_0)}(B_R)}
\]
for every Borel set $A\subset\Isom(\R^d)$.
We assume that there is no point $x\in\R^d$ which is fixed by $\mu_0$-almost all $g$.
Again, this holds if $R$ is sufficiently large.

Recall that $\cR_0$ denotes the regular representation of $\SO(d)$ restricted to $L^2_0(\SO(d))$.
Then $\cR_0(\t(\mu))^{l_0}=(3/4)\cR_0(\t(\mu_0))+(1/4) X$, where $X$ is an operator of norm at most 1.
Hence $\|\cR_0(\t(\mu_0))\|\le 2/3$.
Thus we can apply Corollary \ref{corollary:general} and get
\[
\|\rho_r(\mu_0)\|\le1-c\min\{r^2,1/2\}.
\]
with a constant $c$ depending only on $\mu$.
(The condition $N>0$ holds in Corollary \ref{corollary:general} thanks to our
assumption that there is no point $x\in\R^d$ which is fixed by $\mu_0$-almost all $g$.)

Since $S_r^{l_0}=(3/4)\rho_r(\mu_0)+(1/4)Y$, where $Y$ is an operator of norm at most 1, we have
\[
\|S_r^{l_0}\|\le1-c\min\{r^2,1/2\}.
\]
This proves the lemma, if $l_1$ is a suitably large integer.
\end{proof}

We return to \eqref{equation:toshow} and write the left side in polar coordinates and use the Cauchy-Schwartz
inequality.
\begin{align*}
\int_{a<|\xi|<b}|\wh f(\xi)\wh\nu_l(\xi)|\igap d\xi
&\le C\int_{a}^b r^{d-1}\int_{S^{d-1}}|\Res_r(\wh f)(\xi)S_r^l\psi_r(\xi)|\igap d\xi dr\\
&\le C\int_{a}^b r^{d-1}\|\Res_r(\wh f)\|_2\|S_r^l\psi_r\|_2\igap dr.
\end{align*}
We plug in Lemma \ref{lemma:RW} and suppose that $l\ge2l_1$:
\[
\int_{a<|\xi|<B}|\wh f(\xi)\wh\nu_l(\xi)|\igap d\xi
\le C\int_{a}^b r^{d-1}e^{-(l/2l_1)\min\{r^2,1/2\}}\|\Res_r(\wh f)\|_2\igap dr.
\]

For $l^{-1/3}<r<1$, we have $r^{d-1}e^{-(l/2l_1)\min\{r^2,1/2\}}\le e^{-cl^{1/3}}$, hence
\[
\int_{l^{-1/3}<|\xi|<1}|\wh f(\xi)\wh\nu_l(\xi)|\igap d\xi
\le Ce^{-cl^{1/3}}\|f\|_1,
\]
which is dominated by the first error term.
So it is left to consider the domain $|\xi|>1$.

We use the Cauchy-Schwartz inequality and transform the integral back to Cartesian coordinates.
\begin{align*}
\int_{1<|\xi|}|\wh f(\xi)\wh\nu_l(\xi)|\igap d\xi
&\le Ce^{-cl}\left[\int_{1}^\infty r^{2d}\|\Res_r(\wh f)\|_2^2\igap dr\right]^{1/2}
\cdot\left[\int_{1}^{\infty}r^{-2}\igap dr\right]^{1/2}\\
&\le Ce^{-cl}\left[\int_{|\xi|>1} |\xi|^{d+1}|\wh f(\xi)|^2\igap d\xi\right]^{1/2},
\end{align*}
which is dominated by the second error term.
This finishes the proof of Theorem \ref{theorem:RW}.

\begin{proof}[Proof of Corollary \ref{corollary:RW}]
Let $f_{r,z}$ be a nonnegative smooth function supported on $B(r(1+1/l),z)$
such that $f_{r,z}(x)=1$ for $x\in B(r,z)$ and $\|f_{r,z}\|_1\le C r^d$
and $\|f_{r,z}\|_{W^{2,(d+1)/2}}\le C l^{d/2} r^{-1/2}$, where $C$ is a number that depends only on
$d$.
Then
\[
\E(f_{r(1+1/l)^{-1},z}(Y_l))\le\P(Y_l\in B(r,z))\le\E(f_{r,z}(Y_l)).
\]
We note that
\begin{align*}
&\P(\sqrt l Z+y_0\in B(r(1+1/l)^{-1},z)))\le\E(f_{r(1+1/l)^{-1},z}(\sqrt l Z+y_0))\\
&\qquad{}\le\E(f_{r,z}(\sqrt l Z+y_0))
\le\P(\sqrt l Z+y_0\in B(r(1+1/l),z))
\end{align*}
and
\begin{align*}
\P(\sqrt l Z+y_0\in B(r(1+1/l)^{\pm1},z)&=r^dl^{-d/2}\frac{e^{- |y_0-z|^2/2l\s^2}}{\sqrt{(2\pi)^k\s^{2d}}}\\
&\qquad{}+O(r^{d+2}l^{-\frac{d+2}{2}})+O(r^dl^{-\frac{d+2}{2}}),
\end{align*}
where $\s^2$ is the variance of $Z$.
The latter can be verified using Taylor's theorem for the density function of $Z$.

Then Theorem \ref{theorem:RW} applied to the functions $f_{r,z}$ and $f_{r(1+1/l)^{-1},z}$
gives the claim.
\end{proof}

\section{Self-similar measures}
\label{section:ssproof}

Let $\eta$ be a probability measure supported on contractive similarities of $\R^d$,
and let $\nu$ be the unique $\eta$-stationary measure.
Throughout this section, we assume that the set of contractions
on which $\eta$ is supported has no common fixed point.
Write $\mu=g(\eta)$, where we recall that $g$ is the ``projection'' 
\[
 \l\cdot\s(x)+v \mapsto \s(x)+v.
 \]
 Recall the definition of the operator
\[
Tf(\s)=\int f(\t(\k)^{-1}\s)\igap d\mu(\k).
\]

In this section, we apply our results to prove smoothness of $\nu$ if the contraction factors of the
similarities in the support of $\eta$ are sufficiently close to 1.
First we discuss the special case, when $\l(\mu)$ is supported on a single number.
In this case, we are able to give better quantitative bounds:

\begin{thm}\label{theorem:onelambda}
There is a number $c>0$ depending only on $d$ such that the following holds.
Let $v_1,v_2\in\R^d$ be points for which
\[
N^2:=\int |\k(v_1)-v_2|^2\igap d\mu(\k)
\]
is minimal.
Suppose that
\[
\int |\k(v_1)-v_2|^3\igap d\mu(\k)
\le MN^3.
\]
for some number $M\le\infty$.
Suppose further that there is a number $\l$ such that $\l=\l(\k)$ for $\eta$-almost every $\k$.
Then $\nu$ is absolutely continuous with $n$ times differentiable density if
\[
\l\ge1-c\frac{1-\|T\|_{L^2_0(\SO(d))}}{nM^2}.
\]
\end{thm}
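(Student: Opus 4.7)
The plan is to derive a self-similarity equation for $\wh\nu$ in terms of the operators $S_r$, iterate it using Corollary~\ref{corollary:general}, and convert the resulting polynomial decay of $\wh\nu$ into smoothness of the density via Fourier inversion.

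Since every $\k\in\supp\eta$ has the form $\k(x) = \l\t(\k)x + v(g(\k))$ and $\nu$ is $\eta$-stationary, a direct computation gives
\[
\wh\nu(\xi) = \int e(\langle\xi, v(g)\rangle)\wh\nu(\l\t(g)^{-1}\xi)\igap d\mu(g).
\]
Writing $\psi_r(\xi') := \wh\nu(r\xi')$ for $\xi'\in S^{d-1}$, this is exactly $\psi_r = S_r\psi_{\l r}$. Iterating and letting $k\to\infty$ (using $\wh\nu(0)=1$ and continuity of $\wh\nu$) yields
\[
\|\psi_r\|_2 \leq C_d\prod_{j=0}^\infty \|S_{\l^j r}\|.
\]

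A short case analysis shows that the absence of a common fixed point for $\eta$ forces $N>0$, so Corollary~\ref{corollary:general} applies and gives $\|S_s\|\leq 1 - c_1\min\{(Ns)^2,\g\}$ with $\g := (1-\|T\|_{L^2_0(\SO(d))})/M^2$. Set the crossover radius $s_0 := N^{-1}\sqrt{\g/c_1}$. For $r\geq s_0$ there are at least $K := \lfloor\log(r/s_0)/\log(\l^{-1})\rfloor$ indices $j$ with $\l^j r\geq s_0$, each contributing a factor $(1-c_1\g)$; the remaining factors are bounded by $1$. Using $\log(\l^{-1})\leq 2(1-\l)$ for $\l\geq 1/2$, one deduces
\[
\|\psi_r\|_2 \leq C(r/s_0)^{-\alpha}, \qquad \alpha \geq \frac{c_1\g}{2(1-\l)}.
\]

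For $\nu$ to have an $n$-times differentiable density it suffices that $\int(1+|\xi|)^n|\wh\nu(\xi)|\igap d\xi < \infty$. Passing to polar coordinates and applying Cauchy--Schwarz on each sphere,
\[
\int_{|\xi|\geq s_0} |\xi|^n|\wh\nu(\xi)|\igap d\xi \leq C\int_{s_0}^\infty r^{n+d-1}\|\psi_r\|_2\igap dr \leq C\int_{s_0}^\infty r^{n+d-1-\alpha}\igap dr,
\]
which converges provided $\alpha > n+d$. Combined with the lower bound on $\alpha$, this is ensured by $1-\l < c_1\g/(2(n+d))$; absorbing the dimensional factor into the constant yields the stated threshold $1-\l < c(1-\|T\|_{L^2_0(\SO(d))})/(nM^2)$. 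The only genuine subtlety is the control of the ``low-frequency'' tail of factors in the iteration, but since each factor is at most $1$ they may simply be discarded in the upper bound---all the decay genuinely comes from the $K$ ``spectral-gap'' factors, after which the remainder of the argument is essentially bookkeeping on top of the uniform spectral estimate \eqref{equation:normestimate}.
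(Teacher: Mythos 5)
Your argument is correct and follows the same route as the paper: derive the self-similarity relation $\Res_r(\wh\nu)=S_r\Res_{\l r}(\wh\nu)$, iterate along the geometric progression $r,\l r,\l^2 r,\dots$, apply Corollary~\ref{corollary:general} to each factor above the crossover scale, and conclude polynomial decay of $\wh\nu$ sufficient for $n$-fold differentiability. You have simply filled in more of the bookkeeping (the crossover radius, the $\log(\l^{-1})\le 2(1-\l)$ estimate, and the explicit $\alpha>n+d$ convergence criterion) that the paper leaves implicit.
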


\begin{proof}
Since $\nu$ is $\eta$-stationary, we have $\eta.\nu=\nu$ with a notation analogous to
\eqref{equation:mudotnu}.
If we take the Fourier transform of both sides in the above identity, then we can
derive the formula
\be\label{equation:onelambda}
\Res_r(\wh\nu)=S_r\Res_{\l r}(\wh\nu)
\ee
similarly to \eqref{equation:Sr}.
We can use this to express the Fourier transform of $\nu$ on the sphere of radius $r$
in terms of itself on the sphere of radius $\l r$.
This is the basis of our argument.

We note that $\nu$ is absolutely continuous with $n$ times differentiable density
if, say, 
\[
\int |\wh\nu(\xi)|(1+|\xi|)^n\igap d\xi<\infty.
\]
By simple computation, this will follow at once if we show that, say,
\be\label{equation:Fourierbound}
\|\Res_r(\wh\nu)\|_2\le Cr^{-(d+n+1)}.
\ee

Note that
\[
\int |g(\k)(\l v_1)-v_2|^3\igap d\eta(\k)=\int |\k( v_1)-v_2|^3\igap d\eta(\k)
\]
and a similar relation holds for the third moments.
Therefore, by Corollary \ref{corollary:general}, we have
\[
\|S_r\|\le1-c\frac{1-\|T\|_{L^2_0(\SO(d))}}{M^2}
\]
for sufficiently large $r$ with $c$ depending only on $d$.
We apply this and \eqref{equation:onelambda}
$\log r/\log\l^{-1}$ times for radii in the geometric progression
$r,\l r,\l^2 r,\ldots$ and conclude
\eqref{equation:Fourierbound} and hence the theorem.
\end{proof}

Now we turn to the more general case considered in Theorem \ref{theorem:selfsimilar},
when $\eta=p_1\d_{\k_1}+\ldots+p_k\d_{\k_k}$ and $\k_i$ may have different contraction ratios. Let $p_{min}=\min_i p_i$.
Then similarly to \eqref{equation:onelambda} we can write
\be\label{equation:morelambda}
\Res_r(\wh\nu)=\sum_{i=1}^k p_i\rho_r(g(\k_i))\Res_{\l_i r}(\wh\nu)
\ee
for the Fourier transform of the self-similar measure $\nu$.

We would like to apply Corollay \ref{corollary:general} to prove a norm
estimate for one of the operators $\rho_r(\k_i)$.
This is bound to fail, unfortunately; we need to take the average of several $\rho_r(\k_i)$
to have such an estimate.
Note however, that $\nu$ is also $\eta^{*(l_0)}$-stationary for all integers $l_0$.
We will consider an analogue of \eqref{equation:morelambda} for a decomposition of $\eta^{*(l_0)}$
with respect to contraction factors.
Since $(\R^+,\cdot)$ is commutative, but the group of similarities is not, we obtain many different
similarities in the support of $\eta^{*(l_0)}$ with the same contraction factors. This is exploited in the following proposition, which extracts from a sufficiently high convolution power $\eta ^ {*(l _ 0)}$
a piece which has the same contraction ratio and so that the corresponding measure on $\Isom (\R ^ d)$ has a spectral gap.

\begin{prp}\label{proposition:small piece}
There are $l _ 1 \in \N ^ +$, $ c > 0$ depending on $\norm {T} _ {L ^2 _ 0 (\SO (d))}$, $d$ and~$k $, and $q_0>0$ depending on these parameters and $p_{min}$, so that $\eta ^ {*(l _ 1)}$ can be written as $q _ 0 \eta _ 0+ (1 - q _ 0) \eta _ 1$ with $\eta _ 0, \eta _ 1$ probability measures on the semigroup of contracting similarities of $\R ^ n$, with all contractions appearing in the support of $\eta _ 0$ having the same contraction ratio, and if $\mu _ 0 = g (\eta _ 0)$ the corresponding measure on $\Isom (\R ^ d)$ the operator $\rho _ r (\mu _ 0)$ satisfies
\begin{equation*}
\norm {\rho _ r (\mu _ 0)} _ {L (S ^ {d -1})} \le 1 - c \min (1, r ^2)
.\end{equation*}
\end{prp}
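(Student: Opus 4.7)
The plan is to decompose $\eta^{*(l_1)}$ according to the commutative ``type'' of each word in the generators $\kappa_1, \ldots, \kappa_k$, select the type of largest multinomial weight, and then invoke Corollary \ref{corollary:general} together with a spectral gap theorem of Ab\'ert.

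First I set up the decomposition. For each $n = (n_1, \ldots, n_k) \in \Z_{\ge 0}^k$ with $n_1 + \cdots + n_k = l_1$, let $W_n$ be the set of words $(i_1, \ldots, i_{l_1}) \in \{1, \ldots, k\}^{l_1}$ in which the letter $j$ occurs exactly $n_j$ times, and set
\[
\eta_n := \frac{1}{|W_n|} \sum_{(i_1, \ldots, i_{l_1}) \in W_n} \delta_{\kappa_{i_1} \circ \cdots \circ \kappa_{i_{l_1}}}.
\]
Since contraction ratios multiply and $\R^+$ is commutative, every similarity in $\supp \eta_n$ has the same contraction ratio $\prod_i \lambda(\kappa_i)^{n_i}$, while the isometry parts genuinely differ because $\SO(d)$ is noncommutative. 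Expanding $\eta^{*(l_1)}$ by multilinearity gives
\[
\eta^{*(l_1)} = \sum_n \binom{l_1}{n_1, \ldots, n_k} \prod_i p_i^{n_i} \cdot \eta_n.
\]

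Next, I fix $l_1$ large (to be specified below) and take $n^* \in \Z_{\ge0}^k$ with $n_i^* = \lfloor p_i l_1 \rfloor$ (adjusted so $\sum n_i^* = l_1$). By Stirling's formula, $q_0 := \binom{l_1}{n^*_1, \ldots, n_k^*} \prod_i p_i^{n_i^*}$ is bounded below by a positive constant depending on $l_1, k, p_{\min}$. Setting $\eta_0 := \eta_{n^*}$ and letting $\eta_1$ be the normalized complement produces the required decomposition $\eta^{*(l_1)} = q_0 \eta_0 + (1-q_0)\eta_1$, and every element of $\supp \eta_0$ has common contraction ratio.

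It remains to establish the norm bound on $\rho_r(\mu_0)$ with $\mu_0 := g(\eta_0)$. The main obstacle, and the reason Ab\'ert's theorem is needed, is producing a spectral gap for $T_{\mu_0}$ on $L^2_0(\SO(d))$: a naive attempt to relate $T_{\mu_0}$ to $T_\mu^{l_1}$ fails because $\SO(d)$ is noncommutative and the restriction to a single type class is not preserved by the semigroup structure on words. Ab\'ert's result \cite{Abe-spectral-gap} supplies exactly what is needed, namely a quantitative spectral gap for averaging over such a fixed-type slice, with a bound depending only on $\|T\|_{L^2_0}, d$ and $k$, provided $l_1$ is taken sufficiently large depending on the same data. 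This choice fixes $l_1$.

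Granted the spectral gap for $T_{\mu_0}$, I apply Corollary \ref{corollary:general} to $\mu_0$. Its hypotheses are verified as follows: the spectral gap together with \cite{Var-Euclidean}*{Lemma 20} singles out a common ``barycenter'' for $\mu_0$ and in particular yields $N > 0$; the third-to-second moment ratio $M$ is finite because $\supp \mu_0$ consists of products of $l_1$ isometries drawn from the bounded set $\{g(\kappa_1), \ldots, g(\kappa_k)\}$, so Lemma \ref{lemma:moments} supplies a bound depending on $l_1$ and the generators. Corollary \ref{corollary:general} then gives a bound of the form $1 - c'\min((Nr)^2, (1-\|T_{\mu_0}\|_{L^2_0})/M^2)$; absorbing the $\mu_0$-specific constants $N$, $M$ and the Ab\'ert gap into a single constant $c$ depending on $\|T\|_{L^2_0}, d, k$ yields $\|\rho_r(\mu_0)\|_{L(S^{d-1})} \le 1 - c\min(1, r^2)$ as claimed.
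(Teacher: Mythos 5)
Your setup---decomposing $\eta^{*(l_1)}$ by the commutative ``type'' of each word, selecting a type with large multinomial weight, and invoking Ab\'ert's theorem to get a spectral gap for the rotation part of the slice---matches the paper's strategy and is correct up to that point (the paper picks any type $\ua$ with $\up^\ua\ge 2\|\cR_0(\theta(\mu^{*(l_0)}))\|$ by pigeonhole rather than the central type, but either works).

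There is, however, a genuine gap in how you pass to Corollary~\ref{corollary:general}. You assert that the spectral gap for $T_{\mu_0}$ together with \cite{Var-Euclidean}*{Lemma 20} ``singles out a common barycenter for $\mu_0$ and in particular yields $N>0$.'' That inference is not valid: a unique barycenter $x_0$ exists whenever $\theta(\mu_0)$ has spectral gap, but nothing prevents $x_0$ from being a \emph{common fixed point} of all the isometries in $\supp\mu_0$, in which case $N=0$ and Corollary~\ref{corollary:general} is inapplicable. This is precisely the point the paper flags explicitly: since $g$ is not a homomorphism, $g(\eta^{*(l_0)})\ne \mu^{*(l_0)}$, so the fact that the original $\eta$ has no common fixed point does not propagate in any obvious way to the fixed-type slice $g(\ueta^{*(\ua)})$. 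The paper's remedy is to replace the slice by $\ueta^{*(\ua)}*\d_{\k_0}*\ueta^{*(\ua)}$ for a carefully chosen $\k_0$ (one of $\k_i$ or $\k_i\k_1$) guaranteeing $|\k_0(v_1)-v_2|\ge 1/2$, and then to prove quantitative lower bounds on $N$ and upper bounds on $M$ (Lemmas~\ref{lemma:etaa} and~\ref{lemma:composition}) of the form $N^2\ge c\, l_0^{-6k}$ and $M\le C\, l_0^{15k}$. That quantitative control is also what makes the final constant $c$ in the Proposition depend only on $\|T\|_{L^2_0}$, $d$ and $k$ as claimed; your proposal, which leaves $N$ and $M$ as unspecified ``$\mu_0$-specific constants,'' would give a $c$ with hidden dependence on the particular $\kappa_i$, $p_i$, $\lambda_i$, which is not allowed by the statement. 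You need to supply both the no-common-fixed-point argument and the quantitative moment bounds.
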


The main ingredient in the proof of Proposition~\ref{proposition:small piece} is a following useful result of Mikl\'os Ab\'ert:

\begin{AbcTheorem}[\cite{Abe-spectral-gap}*{Corollary 3}]\label{theorem:Abert}
Let $\a$ be a probability measure on $\SO(d)$.
Let $\a=q_0\a_0+(1-q_0)\a_1$, where $\a_0$ and $\a_1$ are probability measures and $0\le q_0\le1$.
Suppose that $\|\cR_0(\a)\|\le q_0/2$.
Then
\[
\|\cR_0(\a_0)\|\le1-c\left(\frac{q_0}{\log q_0^{-1}}\right)^2,
\]
where $c$ is an absolute constant.
\end{AbcTheorem}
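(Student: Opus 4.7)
\emph{Step 1: Pigeonhole on multisets.} Expand
\[
\eta^{*(l_1)} \;=\; \sum_{\mathbf{i}\in\{1,\dots,k\}^{l_1}} p_{i_1}\cdots p_{i_{l_1}}\,\delta_{\k_{i_1}\cdots\k_{i_{l_1}}}.
\]
The contraction ratio of $\k_{i_1}\cdots\k_{i_{l_1}}$ equals $\prod_j\l_{i_j}$ and hence depends only on the \emph{multiset} of letters appearing in $\mathbf{i}$. Grouping the terms by this multiset, each group consists of similarities sharing a common contraction ratio. There are at most $(l_1+1)^{k-1}$ multisets, so pigeonhole yields one multiset $\mathfrak{a}_\ast$ carrying mass $q_0\ge(l_1+1)^{-(k-1)}$ under $\eta^{*(l_1)}$. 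Define $\eta_0$ to be the restriction of $\eta^{*(l_1)}$ to words with letter-multiset $\mathfrak{a}_\ast$, renormalized, and $\eta_1$ to be the complementary renormalized measure. Then $\eta^{*(l_1)} = q_0\eta_0 + (1-q_0)\eta_1$ and, by construction, every $\k\in\supp\eta_0$ has the same contraction ratio.

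\emph{Step 2: Spectral gap of the rotation projection via Ab\'ert.} Push forward to $\SO(d)$, writing $\alpha := \t_\ast g_\ast\eta^{*(l_1)}$ and $\alpha_i := \t_\ast g_\ast\eta_i$, so $\alpha = q_0\alpha_0+(1-q_0)\alpha_1$ and $\|\cR_0(\alpha)\| = \|T^{l_1}\|_{L^2_0(\SO(d))}$. The spectral-gap hypothesis on $T$ provides an integer $l_\ast$ with $\|T^{l_\ast}\|_{L^2_0}\le 1/2$, so $\|\cR_0(\alpha)\|\le 2^{-\lfloor l_1/l_\ast\rfloor}$. Since this exponential-in-$l_1$ bound beats the polynomial $q_0^{-1}\le(l_1+1)^{k-1}$, choosing $l_1$ sufficiently large in terms of $\|T\|_{L^2_0}, d, k$ forces $\|\cR_0(\alpha)\|\le q_0/2$. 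Ab\'ert's Theorem~\ref{theorem:Abert} then yields
\[
\|\cR_0(\alpha_0)\|\;\le\;1-c\bigl(q_0/\log q_0^{-1}\bigr)^{2},
\]
i.e. a quantitative spectral gap for the rotation projection of $\mu_0 := g(\eta_0)$.

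\emph{Step 3: Invoking Corollary~\ref{corollary:general}.} Because $\supp\eta$ is finite and $l_1$ is now fixed, $\supp\mu_0$ lies in a bounded subset of $\Isom(\R^d)$ depending only on $\eta$ and $l_1$; in particular the third-to-second-moment ratio $M$ in Corollary~\ref{corollary:general} is finite and bounded in the allowed parameters. Combined with the gap from Step~2, Corollary~\ref{corollary:general} gives
\[
\|\rho_r(\mu_0)\|\;\le\;1-c\min\{(Nr)^2,\,1\},\qquad N^2 \;=\;\min_{v_1,v_2}\!\int|g(v_1)-v_2|^2\,d\mu_0(g).
\]

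\emph{Main obstacle.} The remaining and principal difficulty is to ensure $N$ is bounded below by a positive constant depending only on $\|T\|_{L^2_0}, d, k, p_{\min}$, so that $(Nr)^2\gtrsim r^2$ after absorbing a constant into $c$. The assumption that $\supp\eta$ has no common fixed point, together with compactness on the finite parameter space, gives a quantitative lower bound on $N$ for the original $\mu = g(\eta)$. To transfer this to $\mu_0$ I would invoke Lemma~\ref{lemma:moments} (additivity of the translation variance along convolutions) to show that $N^2$ for $\mu^{*(l_1)}$ is bounded below by $l_1$ times that for $\mu$, and then observe that restricting to the multiset $\mathfrak{a}_\ast$ and renormalizing by $q_0^{-1}$ can decrease the minimal second moment by at most a factor depending on $q_0$ and $p_{\min}$ (since the discarded mass $(1-q_0)\eta_1$ contributes nonnegatively to the variance of $\eta^{*(l_1)}$). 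Folding this $q_0,p_{\min}$-dependent lower bound on $N$ into the constant $c$ delivers the desired estimate $\|\rho_r(\mu_0)\|\le 1-c\min(1,r^2)$.
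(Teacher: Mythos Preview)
First, a mismatch: the labelled statement is Ab\'ert's Theorem~\ref{theorem:Abert}, which the paper merely cites from~\cite{Abe-spectral-gap} and does not prove. Your proposal is in fact an argument for Proposition~\ref{proposition:small piece}; I compare it with the paper's proof of that.

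Your Steps~1 and~2 (multiset pigeonhole on $\eta^{*(l_1)}$, then Ab\'ert's theorem for the rotation gap of the selected piece) match the paper. The genuine gap is in your lower bound on $N$. You conflate $\mu^{*(l_1)}=g(\eta)^{*(l_1)}$ with $g(\eta^{*(l_1)})$; the paper stresses that these differ because $g$ is not a homomorphism, so Lemma~\ref{lemma:moments} applied to $\mu^{*(l_1)}$ says nothing directly about $\mu_0=g(\eta_0)$, which is a piece of $g(\eta^{*(l_1)})$. More seriously, your claim that restricting to the multiset $\mathfrak{a}_\ast$ can decrease the \emph{minimal} second moment $\min_{v_1,v_2}\int|g(v_1)-v_2|^2\,d\mu_0$ by only a controlled factor is unjustified: the infimum is over all $(v_1,v_2)$, and there is no a~priori obstacle to $\supp\mu_0$ acquiring a common (or near-common) fixed point even when the full support has none. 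That the discarded mass contributes nonnegatively to the second moment at a \emph{fixed} pair $(v_1,v_2)$ does not control the infimum at the new minimizer for $\mu_0$.

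The paper handles this differently: instead of applying Corollary~\ref{corollary:general} to $\ueta^{*(\ua)}$, it takes $\eta_0=\ueta^{*(\ua)}*\d_{\k_0}*\ueta^{*(\ua)}$ with $\k_0\in\{\k_i,\k_i\k_1:1\le i\le k\}$ chosen so that $|\k_0(v_2)-v_1|\ge 1/2$, where $(v_1,v_2)$ is the minimizer for $\ueta^{*(\ua)}$. Lemmata~\ref{lemma:etaa} and~\ref{lemma:composition} then show, using only the rotation spectral gap of $\ueta^{*(\ua)}$ already obtained, that $\int|\k(u_1)-u_2|^2\,d\eta_0(\k)\ge c\,l_0^{-6k}$ for \emph{every} $u_1,u_2$, giving the required lower bound on $N$ explicitly. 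The paper also sketches an approach closer in spirit to yours, but it relies on the non-concentration estimate Proposition~\ref{proposition:decay} (not variance additivity) and on the contraction ratios being close to~$1$ to pass between $\mu^{*(l_0)}$ and $g(\eta^{*(l_0)})$.
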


 Recall that $\cR$ denotes the regular representation of $\SO(d)$, and $\cR_0$ is  its restriction to the
subspace orthogonal to the constants.
Note that~$T=\cR(\t(\eta))$.

Using Theorem~\ref{theorem:Abert} and
comparing the exponential decay of the sequence
$\|\cR_0(\t(\eta ^ {*(l)}))\|$ with the
polynomial growth in the multiplicative group $(\R^+, \cdot) $ it is quite straightforward to find a decomposition $\eta ^ {*(l _ 0)} = q _ 1 \eta _ 1+ (1 - q _ 1) \eta _ 2$ as above so that $\theta (\eta _ 1)$ (the projection of $\eta _ 1$ to $\SO (d)$) has a spectral gap, i.e.\ such that
$\norm{\cR_0(\theta(\eta_1))}<1$
and the similarities in the support of $\eta_1$ have the same contraction factors.

We describe this decomposition in detail.
Write
\[
I:=\{\ua=(a_1,\ldots,a_k)\in\Z^k:a_1+\ldots+a_k=l_0, a_i\ge0\}.
\]
In addition, we write $\ula^\ua=\l_1^{a_1}\cdots\l_k^{a_k}$,
\[
\up^\ua=p_1^{a_1}\cdots p_k^{a_k}\frac{l_0!}{a_1!\cdots a_k!}
\]
for $\ua\in I$.
Let
\[
J_\ua:=\{\ub=(b_1,\ldots,b_{l_0})\in\Z^{l_0}:\#\{j:b_j=i\}=a_i,\text{for all $1\le i\le k$}\}
\]
and write
\[
\ueta^{*(\ua)}=\frac{a_1!\cdots a_k!}{l_0!}
\sum_{\ub\in J_\ua}\d_{\k_{b_1}\cdots\k_{b_{l_0}}}.
\]

With this notation, we have
\[
\eta^{*(l_0)}=\sum_{\ua\in I} \up^\ua\cdot\ueta^{*(\ua)}
\]
and $\l(\k)=\ula^{\ua}$ for every $\k\in\supp \ueta^{*(\ua)}$.
As $\absolute I \leq (l _ 0)^k$ and since
\begin{equation*}
\norm {T} = \norm {\mathcal{R} _ 0 (\theta(\mu))}<1
\end{equation*} 
(where as before $\mu = g(\eta)$) once $l_0$ is large enough
\begin{equation*}
\absolute {I} ^{-1} \geq 2 \mathcal{R} _ 0 (\theta(\mu ^ {*(l _ 0)})).
\end{equation*}
For such $l _ 0$ since $\sum_ {\ua \in I} \up^\ua=1$ there is a $\ua \in I$ such that \[\up^\ua \geq 2 \mathcal{R} _ 0 (\theta(\mu ^ {*(l _ 0)}))\]
 hence writing $\eta _ 0 = \ueta ^{\ua}$ and defining $\eta _ 1$ by
\begin{equation*}
\eta ^ {*(l_0)}=\up^\ua \eta _ 0 + (1 - \up^\ua) \eta _ 1
\end{equation*}
we may apply Theorem~\ref{theorem:Abert} to conclude that $\mu_0=g(\eta_0)$ satisfies
\begin{equation}\label{equation:spectralgap}
\norm {R_0(\theta(\mu _ 0))}<1-c(l_0)^{-3k}
.\end{equation}
Note that for all $\kappa\in\supp\eta_0$, the contraction ratio $\lambda(\kappa)=\ula^\ua$.

At this point we would like to apply Corollary~\ref{corollary:general} and conclude that $\norm {\rho _ r (\mu _ 0)} _ {L (S ^ {d -1})} < 1 - c \min (1, r ^2)$; however, to do so, we need first to establish that the isometries in the support of $\mu _ 0$ do not have a common fixed point (preferably in a quantifiable form).

The two measures on $\Isom (\R ^ d)$,\ $g (\eta ^ {*(l_0)})$ and $\mu ^ {*(l_0)}$ are in general distinct (though their projection to $\SO (d)$ coincides) since $g$ is \emph{not} a homomorphism.
The latter measure $\mu ^ {*(l _ 0)}$ has been studied extensively above and one way to conclude that the isometries of $\supp \mu _ 0$ do not have a common fixed point is by exploiting the relation between these two measures. We give an alternative proof below in detail, but first give a sketch of this argument. 

The support of the measure  $\eta _ 0$ is given by $\kappa _ 1  \kappa _ 2  \dots  \kappa _ {l _ 0}$ for a set of $(\kappa _ 1, \kappa _ 2, \dots, \kappa _ {l_0})$ of $\eta \times \dots \times \eta$ measure $\up^\ua$. 
We can define a new measure $\mu ' _ 0$ on $\Isom (\R ^ d)$ distinct from $\mu _ 0$ by taking each such $l_0$-tuple and sending it to $g (\kappa _ 1)  \dots  g (\kappa _ {l_0})$.
It follows from Proposition~\ref{proposition:decay}, which  has been a key ingredient in our analysis of the spectral radius of $\rho _ r (\mu)$, that for any $x, y \in \R ^ d$ the set of isometries $\kappa$ mapping $x$ to $y$ has $\mu ^ {*(l)}$ measure which is exponentially small in $l$, hence if $l _ 0$ was large enough there will be no common fixed point to all isometries in the support of $\mu ' _ 0$, and neither can there be a point which is nearly fixed by all these isometries. 
Therefore, if the contraction ratios $\lambda (\kappa)$ for $\kappa \in \supp \eta$ are all sufficiently close to one (in a way that ultimately depends only on the spectral gap for $T$, the cardinality $k$ of $\supp \eta$ and $p_{min}$) the isometries of $\mu_1$ also have no common fixed point, and hence Corollary~\ref{corollary:general} applies establishing Proposition~\ref{proposition:small piece}.

By working with a larger convolution power we can employ the following alternative argument to give an explicit proof of Proposition~\ref{proposition:small piece} that (though we do not work out the details here) gives better bounds.

Instead of applying Corollary \ref{corollary:general} directly to $\ueta^{*(\ua)}$,
we will show below in a series of Lemmata that we can either apply the corollary 
to $\underline\eta^{*(\underline a)}*\d_{\k_i}*\underline\eta^{*(\underline a)}$ 
or to $\underline\eta^{*(\underline a)}*\d_{\k_1\k_i}*\underline\eta^{*(\underline a)}$ with a suitable
choice of $1\le i\le k$. We assume below that the $\lambda_i$ are sufficiently close to one so that $\ula^\ua >1/2$.

First we record some simple but useful identities.

\begin{lem}\label{lemma:randomvector}
Let $U\in\R^d$ be a random vector.
Then
\[
\E[|U-v|^2]=\E[|U-\E[U]|^2]+|v-\E[U]|^2
\]
for all $v\in\R^d$ and
\[
\E[|U-\E[U]|^2]
=\E[|U|^2]-|\E[U]|^2.
\]
\end{lem}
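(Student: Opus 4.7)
The plan is to prove both identities by the standard bias--variance decomposition: write $U - v$ as a sum of a centered random vector and a deterministic vector, expand the squared norm, and integrate, observing that the cross term vanishes.

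First I would set $W := U - \E[U]$ so that $\E[W] = 0$, and write $U - v = W + (\E[U] - v)$. Expanding the squared Euclidean norm gives
\[
|U-v|^2 = |W|^2 + 2\langle W, \E[U] - v\rangle + |\E[U] - v|^2.
\]
Taking expectation, the second term is $2\langle \E[W], \E[U]-v\rangle = 0$ since $\E[W]=0$ and $\E[U]-v$ is deterministic. This yields
\[
\E[|U-v|^2] = \E[|W|^2] + |\E[U]-v|^2 = \E[|U-\E[U]|^2] + |v-\E[U]|^2,
\]
which is the first identity.

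For the second identity I would specialize to $v = 0$ in the first identity. This gives $\E[|U|^2] = \E[|U-\E[U]|^2] + |\E[U]|^2$, and rearranging yields
\[
\E[|U-\E[U]|^2] = \E[|U|^2] - |\E[U]|^2,
\]
as required. There is no real obstacle here; the only thing to be mindful of is that $|\cdot|$ denotes the Euclidean norm and the inner product is bilinear over $\R$, so pulling the deterministic vector $\E[U]-v$ out of the expectation in the cross term is justified coordinatewise.
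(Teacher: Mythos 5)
Your proof is correct and takes essentially the same approach as the paper: expand the square after writing $U-v$ as the sum of the centered vector $U-\E[U]$ and the deterministic vector $\E[U]-v$, and observe that the cross term vanishes. The only cosmetic difference is that you obtain the second identity by specializing $v=0$ in the first, whereas the paper expands $|U-\E[U]|^2$ directly; both are trivial.
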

\begin{proof}
For the first claim, we write
\begin{align*}
&\E[|U-v|^2]=\E[\langle(U-\E[U])-(\E[U]-v),(U-\E[U])-(\E[U]-v)\rangle]\\
&\qquad{}=\E[|U-\E[U]|^2]+|\E[U]-v|^2-2\Re(\E[\langle U-\E[U],\E[U]-v\rangle]).
\end{align*}
Since the third term vanishes, this proves the claim.

For the second part we write
\begin{align*}
\E[|U-\E[U]|^2]&=\E[|U|^2]+|\E[U]|^2-2\Re(\E[\langle U,\E[U]\rangle])\\
&=\E[|U|^2]-|\E[U]|^2.
\end{align*}
\end{proof}

\begin{lem}\label{lemma:etaa}
Let $v_1,v_2\in\R^d$ be two vectors such that
\[
\int |\kappa(v_1)-v_2|^2 \igap d\ueta^{*(\ua)}(\kappa)
\]
is minimal.
Then
\[
\int |\kappa(u_1)-u_2|^2 \igap d\ueta^{*(\ua)}(\kappa)\ge cl_0^{-3k}(|v_1-u_1|^2+|v_2-u_2|^2)
\]
for all $u_1,u_2\in\R^d$
\end{lem}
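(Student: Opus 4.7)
The plan is to view $F(u_1, u_2) := \int |\kappa(u_1) - u_2|^2 \igap d\ueta^{*(\ua)}(\kappa)$ as a quadratic function on $\R^{2d}$, expand it around its minimizer $(v_1, v_2)$, and bound the resulting Hessian from below using the spectral gap~\eqref{equation:spectralgap} for $\mu_0 = g(\ueta^{*(\ua)})$.

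Every $\kappa \in \supp \ueta^{*(\ua)}$ has the form $\kappa(x) = \lambda_0 \sigma_\kappa(x) + t_\kappa$ with common contraction ratio $\lambda_0 := \ula^\ua > 1/2$, $\sigma_\kappa \in \SO(d)$, and $t_\kappa \in \R^d$. Writing $w_j := u_j - v_j$ and expanding pointwise,
\[
|\kappa(u_1) - u_2|^2 = |\kappa(v_1) - v_2|^2 + |\lambda_0 \sigma_\kappa w_1 - w_2|^2 + 2\dotp{\kappa(v_1) - v_2}{\lambda_0 \sigma_\kappa w_1 - w_2},
\]
the cross term integrates to zero: the vanishing of $\grad F$ at $(v_1, v_2)$ is precisely the pair of identities $\int(\kappa(v_1) - v_2)\igap d\ueta^{*(\ua)}(\kappa) = 0$ and $\int \sigma_\kappa^*(\kappa(v_1) - v_2)\igap d\ueta^{*(\ua)}(\kappa) = 0$, which kill the $w_2$ and $w_1$ contributions to the cross term respectively. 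Therefore
\[
F(u_1, u_2) = F(v_1, v_2) + \int |\lambda_0 \sigma_\kappa w_1 - w_2|^2 \igap d\ueta^{*(\ua)}(\kappa),
\]
and since $F(v_1, v_2) \ge 0$ it suffices to bound this integral from below by $c l_0^{-3k}(|w_1|^2 + |w_2|^2)$.

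Using $|\sigma_\kappa w_1| = |w_1|$, the integral equals
\[
\lambda_0^2 |w_1|^2 + |w_2|^2 - 2\lambda_0 \dotp{M w_1}{w_2}, \qquad M := \int \sigma_\kappa \igap d\ueta^{*(\ua)}(\kappa) = \pi_{\rm std}(\theta(\mu_0)),
\]
where $\pi_{\rm std}$ is the standard representation of $\SO(d)$ on $\R^d$. For $d \ge 3$ this is a nontrivial irreducible subrepresentation of $\cR_0$, hence by~\eqref{equation:spectralgap},
\[
\|M\|_{\rm op} \le \|\cR_0(\theta(\mu_0))\| \le 1 - c_0 l_0^{-3k}.
\]
Setting $\gamma := \|M\|_{\rm op}$, Cauchy--Schwarz lower-bounds the integral by the quadratic form $Q(a,b) := \lambda_0^2 a^2 + b^2 - 2 \lambda_0 \gamma a b$ at $(a,b) = (|w_1|, |w_2|)$. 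Its $2\times 2$ matrix has trace $\lambda_0^2 + 1 \le 2$ and determinant $\lambda_0^2(1 - \gamma^2) \ge \lambda_0^2 (1 - \gamma) \ge \tfrac{1}{4} c_0 l_0^{-3k}$, so its smaller eigenvalue is at least $\det/\text{trace} \ge c\, l_0^{-3k}$, which delivers the desired bound.

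The only nontrivial step is the spectral-gap input: recognizing $M$ as $\pi_{\rm std}(\theta(\mu_0))$, embedding $\pi_{\rm std}$ into $\cR_0$ (where $d \ge 3$ is essential), and transcribing the quantitative gap~\eqref{equation:spectralgap} built into the construction of $\mu_0$ in Proposition~\ref{proposition:small piece}; everything else is a direct calculation.
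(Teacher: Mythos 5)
Your proof is correct but takes a genuinely different route from the paper's. The paper introduces $E(u_1):=\int\kappa(u_1)\igap d\ueta^{*(\ua)}(\kappa)$, uses Lemma~\ref{lemma:randomvector} to reduce $\int|\kappa(u_1)-u_2|^2$ to the one-variable quadratic $F(u_1):=\int|\kappa(u_1)-E(u_1)|^2$ with minimum at $v_1$, extracts the pure quadratic part via $\lim_{t\to\infty}F(t(u_1-v_1))/t^2$, bounds it with the spectral gap, and then obtains the $|u_2-v_2|^2$ contribution by a symmetric argument applied to $\kappa^{-1}$, combining the two one-variable bounds at the end. You instead keep both variables in play: you expand the full quadratic form $F(u_1,u_2)$ around the joint minimizer $(v_1,v_2)$, observe that the first-order conditions kill the cross term, and lower-bound the resulting $2d\times 2d$ Hessian by a single $2\times 2$ eigenvalue computation. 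Both arguments rest on exactly the same spectral-gap input — identifying $M=\int\sigma_\kappa\igap d\ueta^{*(\ua)}$ with $\pi_{\rm std}(\theta(\mu_0))$ and embedding $\pi_{\rm std}$ into $\cR_0$ so that $\|M\|_{\rm op}\le\|\cR_0(\theta(\mu_0))\|\le 1-cl_0^{-3k}$. Your version avoids both the somewhat ad hoc projection onto $E(u_1)$ and the appeal to the inverse isometries, giving the joint bound in one shot; the paper's version is marginally shorter line-by-line but obscures the symmetry between $u_1$ and $u_2$. One small remark: the embedding of $\pi_{\rm std}$ into $\cR_0$ only requires the representation to have no fixed vectors, which holds for all $d\ge 2$ (irreducibility over $\C$, which fails at $d=2$, is not actually needed); the $d\ge 3$ hypothesis of the paper is used elsewhere.
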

We note that the proof (only) uses the spectral gap property \eqref{equation:spectralgap}
about $\theta(\ueta^{*(\ua)})$.
\begin{proof}
Write
\[
E(u_1)=\int \kappa(u_1) \igap d\ueta^{*(\ua)}(\kappa)
\]
By Lemma \ref{lemma:randomvector}, we have
\[
\int |\kappa(u_1)-u_2|^2 \igap d\ueta^{*(\ua)}(\kappa)\ge\int |\kappa(u_1)-E(u_1)|^2 \igap d\ueta^{*(\ua)}(\kappa)=:F(u_1).
\]
Clearly, $F(u_1)$ is a polynomial of degree at most two in $u_1$, and it takes its minimum at $v_1$.
Thus
\[
F(u_1)=F(v_1)+\lim_{t\to\infty}\frac{F(t (u_1-v_1))}{t^2}.
\]

Recall that $\l(\k)=\ula^{\ua}$ for $\ueta^{*(\ua)}$-almost every $\kappa$, and observe that the translation
parts of $\k$ is negligible, when we evaluate it on a long vector.
Then
\begin{align*}
&\lim_{t\to\infty}\frac{F(t(u_1-v_1))}{t^2}\\
&\quad{}=\ula^{2\ua}\int \left|\s_1(u_1-v_1)-\int \s_2(u_1-v_1) \igap d\t(\ueta^{*(\ua)})(\s_1)\right|^2 \igap d\t(\ueta^{*(\ua)})(\s_2)\\
&\quad{}=\ula^{2\ua}\left(|u_1-v_1|^2-\left|\int \s_2(u_1-v_1) \igap d\t(\ueta^{*(\ua)})(\s_1)\right|^2\right)
\end{align*}
by the second part of  Lemma \ref{lemma:randomvector}.

Using the spectral gap property \eqref{equation:spectralgap} we get
\begin{align*}
\left|\int \s_2(u_1-v_1) \igap d\t(\ueta^{*(\ua)})(\s_1)\right|&\le\|\cR_0(\t(\ueta^{*(\ua)}))\|\cdot|u_1-v_1|\\
&\le(1-cl_0^{-3k})|u_1-v_1|.
\end{align*}
Thus
\[
\lim_{t\to\infty}\frac{F(t(u_1-v_1))}{t^2}\ge cl_0^{-3k} |u_1-v_1|^2.
\]
Here we used $\ula^{\ua}\ge1/2$.

We proved that
\[
\int |\kappa(u_1)-u_2|^2 \igap d\ueta^{*(\ua)}(\kappa)\ge cl_0^{-3k} |u_1-v_1|^2.
\]
If we apply the same argument to
\[
\int |\kappa(u_1)-u_2|^2 \igap d\ueta^{*(\ua)}(\kappa)
=\ula^{2\ua}\int |\kappa^{-1}(u_2)-u_1|^2 \igap d\ueta^{*(\ua)}(\kappa),
\]
we get
\[
\int |\kappa(u_1)-u_2|^2 \igap d\ueta^{*(\ua)}(\kappa)\ge cl_0^{-3k} |u_2-v_2|^2.
\]
This together with the previous bound proves the lemma.
\end{proof}

\begin{lem}\label{lemma:composition}
Let $v_1,v_2$ be the same as in Lemma \ref{lemma:etaa}.
Then for every $u_1,u_2\in\R^d$ and similarity $\k_0$, we have
\[
\int|\kappa(u_1)-u_2|^2
\igap d\ueta^{*(\ua)}*\d_{k_0}*\ueta^{*(\ua)}(\kappa)
\ge cl_0^{-6k}|\kappa_0(v_2)-v_1|^2.
\]
\end{lem}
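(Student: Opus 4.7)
The plan is to unfold the convolution and apply Lemma \ref{lemma:etaa} twice, exploiting the fact that $\k_0$ is a similarity so that composition with $\k_0$ scales distances by the factor $\l(\k_0)$.

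First I would write a generic $\k \in \supp(\ueta^{*(\ua)} * \d_{\k_0} * \ueta^{*(\ua)})$ as $\k = \k_1 \k_0 \k_2$ with $\k_1, \k_2$ independent and distributed according to $\ueta^{*(\ua)}$. Then
\[
\int|\k(u_1) - u_2|^2 \igap d\ueta^{*(\ua)} * \d_{\k_0} * \ueta^{*(\ua)}(\k)
= \int\!\!\int |\k_1(\k_0(\k_2(u_1))) - u_2|^2 \igap d\ueta^{*(\ua)}(\k_1)\igap d\ueta^{*(\ua)}(\k_2).
\]
Condition on $\k_2$ and apply Lemma \ref{lemma:etaa} to the inner integral with the point $\k_0(\k_2(u_1))$ in place of $u_1$; dropping the $|v_2-u_2|^2$ term gives
\[
\int |\k_1(\k_0(\k_2(u_1))) - u_2|^2 \igap d\ueta^{*(\ua)}(\k_1) \ge c l_0^{-3k}\, |\k_0(\k_2(u_1)) - v_1|^2.
\]

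Next I would pull the factor $\k_0$ out: since $\k_0$ is a similarity with ratio $\l(\k_0)$,
\[
|\k_0(\k_2(u_1)) - v_1| = \l(\k_0)\, |\k_2(u_1) - \k_0^{-1}(v_1)|.
\]
Now integrating over $\k_2$ and applying Lemma \ref{lemma:etaa} a second time, this time with the point $\k_0^{-1}(v_1)$ playing the role of $u_2$, I obtain
\[
\int |\k_2(u_1) - \k_0^{-1}(v_1)|^2 \igap d\ueta^{*(\ua)}(\k_2) \ge c l_0^{-3k}\, |\k_0^{-1}(v_1) - v_2|^2,
\]
again dropping the $|u_1 - v_1|^2$ term. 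Finally, using once more that $\k_0$ is a similarity,
\[
\l(\k_0)\, |\k_0^{-1}(v_1) - v_2| = |v_1 - \k_0(v_2)|,
\]
so that multiplying the two bounds yields
\[
\int |\k(u_1) - u_2|^2 \igap d\ueta^{*(\ua)} * \d_{\k_0} * \ueta^{*(\ua)}(\k) \ge c^2 l_0^{-6k}\, |\k_0(v_2) - v_1|^2,
\]
which is precisely the claimed bound.

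There is no real obstacle here: once one notices that the right factor of $\ueta^{*(\ua)}$ produces a lower bound involving $v_1$ (via the first invocation of Lemma \ref{lemma:etaa}) and the left factor can then be used to produce a lower bound involving $v_2$ after transporting through $\k_0$, the argument is a direct two-step application of Lemma \ref{lemma:etaa} together with the identity $|\k_0(x) - \k_0(y)| = \l(\k_0)|x-y|$. The only point requiring slight care is the order of composition in the convolution (so that $\k_0$ sits between the two factors in the correct way) and the bookkeeping for the $\l(\k_0)$ factors, which cancel exactly so as to produce the expression $|\k_0(v_2) - v_1|$ in the stated lower bound.
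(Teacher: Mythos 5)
Your proof is correct and essentially the same as the paper's: both apply Lemma \ref{lemma:etaa} once to each of the two $\ueta^{*(\ua)}$ factors in the triple convolution, using the similarity identity $|\kappa_0(x)-\kappa_0(y)|=\l(\kappa_0)|x-y|$ to transport the intermediate bound through $\kappa_0$. You apply the lemma first to the outer factor (keeping the $v_1$-term) and then to the inner one (keeping the $v_2$-term), whereas the paper first conjugates the whole integrand by the outer part and applies the lemma in the reverse order; this is a presentational dual of the same two-step argument, with the $\l(\kappa_0)$ and $\ula^{\ua}$ scaling factors cancelling identically.
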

This lemma provides us with a bound on the second moment needed to apply Corollary
\ref{corollary:general} for the measure $\ueta^{*(\ua)}*\d_{k_0}*\ueta^{*(\ua)}(\kappa)$
provided $\k_0$ does not map $v_2$ near $v_1$.
In the proof of Proposition \ref{proposition:small piece}, we will find such an element $\k_0$
among $\k_1,\ldots,\k_k,\k_1\k_1,\ldots,\k_k\k_1$.
\begin{proof}
Using Lemma \ref{lemma:etaa} twice we write
\begin{align*}
&\int|\kappa(u_1)-u_2|^2
\igap d\ueta^{*(\ua)}*\d_{\k_0}*\ueta^{*(\ua)}(\kappa)\\
&\qquad{}=\ula^{2\ua}\l(\k_0)^2\int|\kappa_1(u_1)-\k_0^{-1}\kappa_2^{-1}(u_2)|^2
\igap d\ueta^{*(\ua)}(\kappa_1) d\ueta^{*(\ua)}(\kappa_2)\\
&\qquad{}\ge cl_0^{-3k}\int|v_2-\kappa_0^{-1}\kappa^{-1}(u_2)|^2
\igap d\ueta^{*(\ua)}(\kappa)\\
&\qquad{}= cl_0^{-3k}\l(\k_0)^{-2}(\ula^{2\ua})^{-1}\int|\kappa(\kappa_0(v_2))-u_2)|^2
\igap d\ueta^{*(\ua)}(\kappa)\\
&\qquad{}\ge cl_0^{-6k}|\kappa_0(v_2)-v_1|^2,
\end{align*}
which was to be proved.
\end{proof}

\begin{proof}[Proof of Proposition~\ref{proposition:small piece}]
We assume without loss of generality that \linebreak
$\max_{i=1,\ldots,k}|v(\k_i)|=1$, and for every
$x\in\R^d$, there is $1\le i\le k$ such that $|\k_i(x)-x|\ge 1$.
Indeed, this will hold if we change the origin and rescale the metric.
Now let $v_1, v_2$ be as in Lemma \ref{lemma:etaa}.
If there is $1\le i\le k$ such that $|\k_i(v_1)-v_2|\ge1/2$, then let $\k_0=\k_i$, $q_0=\up^{2\ua}p_i$
and $l_1=2l_0+1$.
In the opposite case, let $1\le i\le k$ be such that $|\k_i(\k_1(x))-\k_1(x)|>1$, and we take
$\k_0=\k_i\k_1$, $q_0=\up^{2\ua}p_ip_1$ and $l_1=2l_0+2$.
Observe that $|\k_0(v_1)-v_2|\ge1/2$ in both cases.

Take $\eta_0=\ueta^{*(\ua)}*\d_{\k_0}*\ueta^{*(\ua)}$.
We apply Corollary \ref{corollary:general} to the measure $\mu_0=g(\eta_0)$.
Clearly,
\[
\|\cR_0(\t(\mu_0))\|\le\|\cR_0(\t(\ueta^{*(\ua)}))\|\le1-cl_0^{3k}.
\]

Let $u_1,u_2\in\R^d$ be such that
\[
\int|\k(u_1)-u_2|^2\igap d\eta_0(\k)
\]
is minimal.
Observe that the above quantity is at most $(2l_0+2)^2$ for $u_1=u_2=0$ by our choice of the
coordinate system.
By the same argument as in the proof of Lemma \ref{lemma:etaa}, we can show that
\[
\int|\k(0)-0|^2\igap d\eta_0(\k)\ge cl_0^{-3k}(|u_1-0|^2+|u_2-0|^2).
\]
We conclude $|u_1|^2+|u_2|^2\le Cl_0^{3k+2}$ from these.
Thus
\[
\int|\k(u_1)-u_2|^3\igap d\eta_0(\k)\le C (l_0^{(3k+2)/2}+l_0+l_0^{(3k+2)/2})^3\le C l_0^{6k}.
\]

By Lemma \ref{lemma:composition}, we have
\[
\int|\k(u_1)-u_2|^2\igap d\eta_0(\k)\ge c l_0^{-6k}.
\]

Since $\l(\k)=\ula^{2\ua}\cdot\l(\k_0)=:\l_0$ for $\eta_0$-almost all $\k$, the minimum of
\[
\int|g(w_1)-w_2|^2\igap d\mu_0(g)
\]
is attained for $w_1=\l_0 u_1$ and $w_2=u_2$.
Moreover,
\begin{align*}
\int|g(w_1)-w_2|^2\igap d\mu_0(g)\ge c l_0^{-6k}&=:N^2\quad{\rm and}\quad\\
\int|g(w_1)-w_2|^3\igap d\mu_0(g)\le Cl_0^{6k}&=C l_0^{15k}N^2.
\end{align*}
Therefore
\[
\|\rho_r(\mu_0)\|\le 1-c l_0^{-33k}\min(1,r^2)
\]
by Corollary \ref{corollary:general}.
\end{proof}

Given Proposition~\ref{proposition:small piece}, the proof of Theorem~\ref{theorem:selfsimilar} for the general case proceeds as in Theorem~\ref{theorem:onelambda}:

\begin{proof}[Proof of Theorem~\ref{theorem:selfsimilar}]
By Proposition~\ref{proposition:small piece} we can write $\eta^{*(l_1)}=\sum_{i=0}^A q_i\eta_i$ for some integer $A$,
measures $\eta_i$ and positive real numbers $q_i$
such that $\l(\eta_i)=\l_i$ is constant almost surely and $\|\rho_r(g(\eta_0))\|\le 1-c \min(1,r^2)$.

We recall \eqref{equation:morelambda}:
\[
\Res_r(\wh\nu)=\sum_{i=0}^A q_i\rho_r(g(\eta_i))\Res_{\l_i r}(\wh\nu).
\]
Since $\l(\k_i)\ge \bar\l$ for all $1\le i\le k$, we have
\[
\|\Res_r(\wh\nu)\|_2\le(1-q_0+q_0\|\rho_r(\mu_0)\|)\max_{r>s>\bar\l^{l_1}r}\{\|\Res_s(\wh\nu)\|_2\}.
\]
By induction, this implies that $\|\Res_r(\wh\nu)\|_2$ has arbitrarily fast polynomial decay
if $\bar\l^{l_1}$ is sufficiently close to $1$.
Note that $l_1$ and $q_0$ depends only on $k$, $p_{min}$ and the spectral gap of $T$.
This proves the theorem.
\end{proof}

\bibliography{varju}
\bibliographystyle{alpha}

\end{document}